\DeclareMathOperator{\Ric}{Ric}
\begin{document}

\ensubject{fdsfd}

\ArticleType{ARTICLES}
\Year{2017}
\Month{January}%
\Vol{60}
\No{1}
\BeginPage{1} %
\DOI{10.1007/s11425-000-0000-0}
\ReceiveDate{January 7, 2020}
\AcceptDate{March 11, 2021}

\title[Regularity of IMCF in 3-dimensional AH manifolds]{Regularity of inverse mean curvature flow in asymptotically hyperbolic manifolds with dimension $3$}
{Regularity of inverse mean curvature flow in asymptotically hyperbolic manifolds with dimension $3$}

\author[*]{Yuguang Shi}{{ygshi@math.pku.edu.cn}}
\author[]{Jintian Zhu}{{zhujt@pku.edu.cn}}

\AuthorMark{Shi Y}

\AuthorCitation{Shi Y, Zhu J}

\address[]{Key Laboratory of Pure and Applied Mathematics, School of Mathematical Sciences,\\
 Peking University, Beijing, {\rm 100871}, China}


\abstract{By making use of the nice behavior of Hawking masses of slices of a weak solution of inverse mean curvature flow in  three dimensional asymptotically hyperbolic manifolds, we are able to show that each slice of the flow is star-shaped after a long time, and then we get the regularity of the  weak solution of inverse mean curvature flow in asymptotically hyperbolic manifolds. As an application, we prove that the limit of Hawking mass of the slices of a weak solution of inverse mean curvature flow with any  connected $C^2$-smooth surface as initial data in asymptotically ADS-Schwarzschild manifolds with positive mass is bigger than or equal to the total mass, which is completely different from the situation in asymptotically flat case.}

\keywords{regularity, inverse mean curvature flow, asymptotically hyperbolic, Hawking mass}

\MSC{ Primary 53C44, Secondary 83C57}

\maketitle

\section{Introduction}
A Riemannian $3$-manifold $(M^3,g)$ is {\it asymptotically hyperbolic} if it is connected and if there are a bounded open set $U \subset M$ and a diffeomorphism
$$
M \setminus U \cong_x \mathbb R^3 \setminus B_1(0)
$$
such that, in polar coordinates, the metric is in the form
\begin{equation}\label{Eq: AHmetric}
g=\mathrm dr\otimes\mathrm dr+\sinh^2r g_{\mathbb S^2}+Q,
\end{equation}
where
\begin{equation}\label{Eq: errorQdecay}
| Q  |_{\bar g} + | \bar \nabla Q  |_{\bar g} +  |  \bar \nabla^{2} Q |_{\bar g} + |\bar\nabla^3 Q|_{\bar g} = O ( e^{- 3  r} ),
\end{equation}
and  $\bar g$ denotes the hyperbolic metric $ \bar g = dr\otimes dr +\sinh^2 r g_{\mathbb{S}^2}  $.

In this paper, we also require that the scalar curvature of $(M,g)$ satisfies
\begin{equation}\label{Eq: scalarcurvature}
R+6=O(e^{-\alpha r}),\quad\text{\rm for some}\quad\alpha>3.
\end{equation}

In above definition, $M\backslash U$ is called an {\it exterior region} of the asymptotically hyperbolic manifold $(M,g)$.

An asymptotically hyperbolic $3$-manifold $(M,g)$ is called {\it asymptotically Schwarzschild-anti-deSitter} if there is an exterior region such that the metric is like
\begin{equation}\label{def:SadS}
g = dr\otimes dr + \left( \sinh^2 r + \frac{m }{ 3 \sinh r}  \right) g_{\mathbb{S}^2} + Q,
\end{equation}
where
\begin{equation}\label{Eq: AAdSerrorQdecay}
| Q  |_{\bar g} + | \bar \nabla Q  |_{\bar g} +  |  \bar \nabla^{2} Q |_{\bar g} + |\bar\nabla^3 Q|_{\bar g} = O ( e^{- 5  r} ).
\end{equation}
 {For convenience, such an $(M,g)$ will be called an asymptotically ADS-Schwarzschild manifold in our later discussion.}

A smooth solution of {\it inverse mean curvature flow} (IMCF) in an asymptotically hyperbolic manifold $(M,g)$ means a smooth family of surfaces $F:\Sigma\times [0,T)\to M$, which satisfies
\begin{equation}\label{def: IMCF}
\frac{\partial}{\partial t}F(p,t)=H(p,t)^{-1}\nu(p,t),\quad F(\cdot,0)=F_0,
\end{equation}
where $F_0$ is a smooth embedding of $\Sigma$. Denote $\Sigma_t=F(\Sigma,t)$, we also say inverse mean curvature flow $\Sigma_t$ and call $\Sigma_0$ the initial data surface of the flow for convenience.

In order to prove Penrose Inequality, a theory of {weak solutions} of inverse mean curvature flow was developed   in \cite{HI2001} by Huisken and Ilmanen. Namely, they raised up a weak notion of level-set solution for inverse mean curvature flow and obtained the existence for a weak solution with a strictly outer-minimizing $C^2$-smooth initial data surface  in general complete, connected Riemannian $n$-manifolds, which admit a suitable subsolution at the infinity. In particular, a weak solution of inverse mean curvature flow with an outer-minimizing initial data surface always exists in an asymptotically hyperbolic $3$-manifold $(M,g)$.  As mentioned in \cite{HI2001} , a kind of ``jump phenomenon"   may take place in a weak solution of inverse mean curvature flow in asymptotically flat (or hyperbolic) manifolds, and due to this, a smooth solution may not be a weak solution automatically. However, one can show  each  slice of such a weak solution of inverse mean curvature flow is of $C^{1, \alpha}$ and $W^{2,p}$ for some $\alpha \in (0,1)$ and any $p>1$.  Besides \cite{HI2001}, there are several interesting applications of inverse mean curvature flow to various geometric inequalities, see  \cite{BC2014}, \cite{Ch2016},  \cite{BHW2016},  \cite{DN2015}, \cite{GL2009}, \cite{GWW2014}, \cite{W2018}, etc.

In this paper, we  consider the regularity of a weak solution of inverse mean curvature flow, {whose existence was established in \cite{HI2001}},  in an asymptotically hyperbolic manifold $(M, g)$. Namely, we are going to show

\begin{theorem}\label{Thm: maintheorem1}
Let $(M,g)$ be an asymptotically hyperbolic {3-manifold} and $\{\Sigma_t\}_{t\geq 0}$ be a weak solution of inverse mean curvature flow with a  connected $C^2$-smooth initial data surface in $(M,g)$. Then there is a $T_0>0$ such that $\{\Sigma_t\}_{t\geq T_0}$ is smooth.
\end{theorem}

Regularity of {weak solutions} of inverse mean curvature flow  was first established by Huisken and Ilmanen in \cite{HI2008} in Euclidean space $\mathbb R^n$.  Later, it was generalized  to some rotational symmetric manifolds in \cite{LW2017}. In \cite{Ne2010}, A. Neves obtained a precise behavior of the slices of a smooth solution of inverse mean curvature flow in an AdS-Schwarzschild manifold with positive mass by a careful analysis of  various geometric quantities along the flow. However, all of those works in \cite{LW2017} and \cite{Ne2010}  need to assume that  the initial surfaces are star-shaped, which plays a  crucial role in their proofs. Another {difficulty} in investigation of such problems in asymptotically hyperbolic situation is the failure of blow-down arguments which is very effective in asymptotically flat case.

Indeed, star shape is a kind of the first derivative assumption of the surfaces, by this one may get a positive lower bound of the mean curvature of the slices of a smooth solution of inverse mean curvature flow, which can be regarded as  a second derivative condition of the surfaces. Based on this, one can further get higher order estimates of the inverse mean curvature flow. However, {for} general asymptotically hyperbolic manifolds, the notion of star shape of an initial surface seems not to make sense if the surface is not contained in any exterior region.  We observe that, in three dimensional case,  Hawking mass of the slice of a weak solution of inverse mean curvature flow in an asymptotically hyperbolic manifold $(M, g)$ has a uniform lower bound, which then implies that the topology of each slice is sphere after a long time. Furthermore, together with Gauss equations, we get a nice decay estimate of  $L^2$-norm of trace-free part of the second fundamental forms of the slices, which can be regarded as a kind of $W^{2,2}$  a prior estimates of the surfaces. This fact, together with uniform  $C^{1,\alpha}$ estimates obtained in \cite{HI2001}, enables us to show that each slice  of a weak solution of inverse mean curvature flow is automatically star-shaped  when $t \geq T_0$ for some large time $T_0$ (see Corollary \ref{Cor: angleestimatesigmat}).  Unlike \cite{HI2008}, \cite{LW2017}, \cite{Ne2010}, etc,  our ambient manifold $(M^3,g)$ may not be warp-product,  the second fundamental forms of the   slices  are involved in error terms of evolution equations of several geometric quantities, and that makes estimates much more complicated.

For any large time $t>0$, let $\Sigma_t$ be the slices of a weak solution of inverse mean curvature flow in $(M, g)$. As in \cite{HI2008}, we may use mean curvature flow to construct a sequence of smooth surfaces to approximate $\Sigma_t$. Therefore, for any small $s>0$,   we can use $\Sigma_{t,s}$ to denote the slice of mean curvature flow with initial surface $\Sigma_t$, whose detailed construction is left to Lemma \ref{Lem: MFCsigmats}. Note that  $\Sigma_{t,s}$   is  strictly mean convex in $(M, g)$ for small $s$, it can be taken as an initial data surface for a smooth solution of inverse mean curvature flow.    Now, let $\bar\Sigma_{t,s,\tau}$ be the slice of the smooth solution of inverse mean curvature flow with $\Sigma_{t,s}$ as the initial surface at time $\tau>0$.

Unlike in \cite{HI2008}, \cite{LW2017}, \cite{Ne2010}, we use another way rather than investigating evolution equations to obtain a long-time estimate for the star-shape of $\bar\Sigma_{t,s,\tau}$, which guarantees us to get rid of a requirement on the warp-product structure and positive mass of the ambient manifold $(M,g)$. In fact, we observe that the second fundamental form of $\bar\Sigma_{t,s,\tau}$ {enjoys} a uniform upper bounded estimate (see Corollary \ref{Cor: interiorestimateh}), which provides us a uniform $C^{1,\alpha}$ estimates of $\bar\Sigma_{t,s,\tau}$. Notice that $\bar\Sigma_{t,s,\tau}$ also have a uniform lower bound on their Hawking {mass}, after playing the same trick as what we have done for $\Sigma_t$, we can see that $\bar\Sigma_{t,s,\tau}$ is star-shaped (see Proposition \ref{Prop: barsigmatstau}) for all $\tau>0$ when $t$ is large enough. Then by Krylov's regularity theory (see Page 253 \cite{K1987}), we get uniform
 upper bounds of the higher order estimate of second fundamental forms of $\bar\Sigma_{t,s,\tau}$. From the compactness of the weak solution of inverse mean curvature flow, we see that there is a smooth solution for inverse mean curvature flow  with $\Sigma_t$ as the initial data for some large $t$. Therefore, by the uniqueness of the weak solution of inverse mean curvature flow,  we get  the regularity of the inverse mean curvature flow after a large time.

 As an application of Theorem \ref{Thm: maintheorem1}, we can obtain the following result:

 \begin{theorem}\label{maintheorem2}
Let $(M^3, g)$ be an asymptotically ADS-Schwarzschild manifold with  $m>0$ and $\Sigma_t$  be a weak solution of inverse mean curvature flow  with initial data surface $\Sigma$, where $\Sigma$ is any strictly outer-minimizing and connected $C^2$-smooth surface. Then we have
$$
\lim_{t\to+\infty} m_H(\Sigma_t)\geq\frac{m}{2},
$$
with equality if and only if
$$\lim_{t\to+\infty}(\bar r_t-\underline r_t)=0,$$
where
\begin{equation}
\bar r_t:=\max_{\Sigma_t}r,\quad\text{\rm and}\quad\underline r_t:=\min_{\Sigma_t}r.
\end{equation}
\end{theorem}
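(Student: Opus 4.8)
\emph{Proof strategy for Theorem~\ref{maintheorem2}.}

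The plan is to use Theorem~\ref{Thm: maintheorem1} to reduce to a smooth, star‑shaped flow, and then to evaluate $\lim m_H(\Sigma_t)$ by combining a Geroch‑type monotonicity with a sharp Hölder inequality. By Theorem~\ref{Thm: maintheorem1} and Corollary~\ref{Cor: angleestimatesigmat}, after discarding an initial time interval we may assume $\{\Sigma_t\}_{t\geq 0}$ is a \emph{smooth} solution of inverse mean curvature flow whose slices are connected; by the uniform lower bound for the Hawking mass together with Gauss--Bonnet each $\Sigma_t$ is a topological sphere, it lies in an exterior region, and it is star‑shaped with a uniform angle bound. Since the regions enclosed by $\Sigma_t$ exhaust $M$, we also have $\underline r_t\to+\infty$. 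Write $\Sigma_t=\{(\rho_t(\xi),\xi):\xi\in\mathbf{S}^2\}$; the angle estimate reads $1\leq v_t:=\langle\nu,\partial_r\rangle^{-1}\leq C_0$, and refining the asymptotic analysis behind Theorem~\ref{Thm: maintheorem1} (in the spirit of \cite{Ne2010}) one gets the radial alignment $v_t\to 1$ uniformly on $\mathbf{S}^2$.

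Recall that along a smooth inverse mean curvature flow of topological spheres in a $3$‑manifold one has a Geroch‑type identity of the form
\[
\frac{\mathrm d}{\mathrm dt}m_H(\Sigma_t)=\frac{|\Sigma_t|^{1/2}}{(16\pi)^{3/2}}\Big(\int_{\Sigma_t}(R+6)\,\mathrm d\mu+2\int_{\Sigma_t}\frac{|\nabla H|^2}{H^2}\,\mathrm d\mu+\int_{\Sigma_t}|\mathring{A}|^2\,\mathrm d\mu\Big),
\]
where $m_H(\Sigma)=\frac{|\Sigma|^{1/2}}{(16\pi)^{3/2}}\big(16\pi+\int_{\Sigma}(4-H^2)\,\mathrm d\mu\big)$ and $\mathring{A}$ is the traceless second fundamental form. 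Since $R+6=O(e^{-5r})$ for an asymptotically ADS-Schwarzschild manifold and $\underline r_t\to+\infty$, the first integral, written in the graph, is $O(e^{-\delta t})$ for some $\delta>0$, so $\lim_{t\to+\infty}m_H(\Sigma_t)$ exists in $(-\infty,+\infty]$. If this limit is $+\infty$ the asserted inequality is trivial and the equality statement vacuous, so we may assume it is finite; then $\int_0^{\infty}\frac{|\Sigma_t|^{1/2}}{(16\pi)^{3/2}}\big(\int_{\Sigma_t}|\mathring{A}|^2+2\int_{\Sigma_t}|\nabla H|^2/H^2\big)\,\mathrm dt<\infty$, which together with the uniform $W^{2,2}$‑estimate forces $\int_{\Sigma_t}|\mathring{A}|^2\,\mathrm d\mu=o(|\Sigma_t|^{-1/2})$.

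For the lower bound, use Gauss--Bonnet and the Gauss equation to rewrite, for $t$ large,
\[
16\pi+\int_{\Sigma_t}(4-H^2)\,\mathrm d\mu=\int_{\Sigma_t}\big(2(R+6)-4(\Ric(\nu,\nu)+2)\big)\,\mathrm d\mu-2\int_{\Sigma_t}|\mathring{A}|^2\,\mathrm d\mu.
\]
Substituting the asymptotically ADS-Schwarzschild expansions $R+6=O(e^{-5r})$ and $\Ric(\nu,\nu)+2=\frac{m}{2\sinh^3 r}\big(1-3\langle\nu,\partial_r\rangle^2\big)+O(e^{-5r})$, passing to the graph (so $\mathrm d\mu=(\sinh^2\rho_t+O(e^{-3\rho_t}))\,v_t\,\mathrm d\sigma$), and using $\underline r_t\to+\infty$, $v_t\to 1$, $1\leq v_t\leq C_0$ and the decay $\int_{\Sigma_t}|\mathring{A}|^2=o(|\Sigma_t|^{-1/2})$, all contributions to $m_H(\Sigma_t)$ except one are $o(1)$, while the remaining one equals
\[
\big(1+o(1)\big)\,\frac{4m}{(16\pi)^{3/2}}\Big(\int_{\mathbf{S}^2}\sinh^2\rho_t\,\mathrm d\sigma\Big)^{1/2}\int_{\mathbf{S}^2}\frac{\mathrm d\sigma}{\sinh\rho_t}\ \geq\ \big(1+o(1)\big)\,\frac{m}{2},
\]
the inequality being Hölder's, with equality exactly when $\rho_t$ is constant on $\mathbf{S}^2$. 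Hence $\lim_{t\to+\infty}m_H(\Sigma_t)\geq \frac m2$. If equality holds, tracing the chain of estimates forces asymptotic equality in Hölder's inequality, i.e. $\bar r_t-\underline r_t\to 0$; conversely, if $\bar r_t-\underline r_t\to 0$, then $\Sigma_t$ is $C^0$‑close to coordinate spheres, and by the uniform $C^{1,\alpha}$‑ and $W^{2,2}$‑estimates together with $\int_{\Sigma_t}|\mathring{A}|^2\to 0$ one gets $m_H(\Sigma_t)-m_H(S_{\bar r_t})\to 0$ and $m_H(S_{\bar r_t})\to \frac m2$, so the limit equals $\frac m2$.

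The main difficulty lies in the last paragraph's error analysis: after multiplying by $|\Sigma_t|^{1/2}\sim e^{t/2}$, the deviations of $\Ric(\nu,\nu)$, of the area element, and of $H$ from the coordinate‑sphere model must still tend to $0$, which is precisely where the $e^{-5r}$‑decay of the error term $Q$ in the definition of an asymptotically ADS-Schwarzschild manifold, the angle estimate, the radial alignment $v_t\to 1$, and the quantitative $|\mathring{A}|^2$‑decay are all used; establishing the alignment $v_t\to 1$ (equivalently, ruling out slices that remain far from radially graphical, for which one expects $m_H(\Sigma_t)\to +\infty$) is the most delicate step.
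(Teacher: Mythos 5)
Your proposal takes a genuinely different route from the paper, so let me first compare and then point to the gaps. The paper first proves Theorem~\ref{Thm: main2}, which is considerably stronger: by verifying Hypothesis (H) of \cite{Ne2010} via Lemma~\ref{Lem: HandoA} and then running Neves' induction machinery (Lemma 3.3--3.6 and Proposition 2.1 of \cite{Ne2010}, with the $e^{-5r}$ decay of $Q$ limiting the admissible order), it establishes that the graph functions $f_t$ converge in $C^0(\mathbf S^2)$ to a fixed $C^{k-1,\alpha}$ function $f$ and that
\[
\lim_{t\to+\infty}m_H(\Sigma_t)=\frac{m}{2}\left(\fint_{\mathbf S^2}e^{2f}\,\mathrm d\mu_{\mathbf S^2}\right)^{1/2}\fint_{\mathbf S^2}e^{-f}\,\mathrm d\mu_{\mathbf S^2}.
\]
The stated theorem then falls out of a single Hölder inequality applied to the limit function $f$, with the equality case reduced to $f$ being constant. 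Your plan, by contrast, bypasses the existence of the limit profile $f$ and tries to read the lower bound directly off the Geroch monotonicity and the Gauss equation; the final Hölder inequality is the same, applied to $\sinh\rho_t$ rather than to $e^f$. This is a legitimate alternative structure for the inequality, and it trades the Neves-style higher-order estimates for a monotonicity argument, but it leaves you with weaker information at the end, which is exactly where the gaps are.

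Two concrete problems. First, the step ``$\int_0^{\infty}|\Sigma_t|^{1/2}\int_{\Sigma_t}|\mathring A|^2\,\mathrm dt<\infty$ together with the uniform $W^{2,2}$-estimate forces $\int_{\Sigma_t}|\mathring A|^2=o(|\Sigma_t|^{-1/2})$'' is not justified: the integral bound only gives $\liminf_{t}|\Sigma_t|^{1/2}\int_{\Sigma_t}|\mathring A|^2=0$, and Lemma~\ref{Lem: nearlyumbilicalsphere sigmat} only provides the $O(|\Sigma_t|^{-1/2})$ bound, not $o$. For the inequality $\lim m_H\geq m/2$ a subsequence suffices (since the limit of $m_H$ exists), so this is repairable, but your write-up should say so. Second, and more seriously, the equality characterization does not go through as written: with only a subsequential smallness of $|\Sigma_t|^{1/2}\int|\mathring A|^2$, concluding ``asymptotic equality in Hölder'' and then $\bar r_t-\underline r_t\to 0$ for the \emph{full} flow requires exactly the kind of quantitative convergence of $\rho_t$ that the paper gets from $f_t\to f$ and that your argument has not established; the converse implication ($\bar r_t-\underline r_t\to 0\Rightarrow$ equality) is likewise left at the level of an assertion without a rate. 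Finally, a minor note: the uniform radial alignment $\langle\nu,\partial_r\rangle\to 1$ is already Corollary~\ref{Cor: angleestimatesigmat} and needs no separate ``refinement.''
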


 Note that the total mass of $(M^3, g)$ is $\frac{m}2$,   due to Theorem \ref{maintheorem2}, we know that it is almost impossible to show the Penrose inequality by inverse mean curvature flow in asymptotically hyperbolic manifolds, which was pointed out in \cite{Ne2010} by considering inverse mean curvature flow with some special initial data. Our arguments of proof Theorem \ref{maintheorem2} are mainly from  \cite{Ne2010}.

The remaining of the paper is organized as follows. In section 2, we give some preliminary lemmas which will  be used later. We show that, for a weak solution of inverse mean curvature flow, the slices $\Sigma_t$ are $L^2$-nearly umbilical spheres for large times $t$. Based on this, we prove that {$\Sigma_t$ is star-shape} for $t$ large enough. In section 3, we prove by Stampacchia iteration process that a smooth solution of inverse mean curvature flow with star-shaped slices $\bar\Sigma_t$ has a lower bound estimate for the mean curvature of $\bar\Sigma_t$, which is independent of the mean curvature lower bound of the initial data surface. Higher order estimates of $\bar\Sigma_t$ will follow from lower bounded mean curvature and bounded second fundamental form by Krylov's regularity theory. Based on this, an extension lemma is introduced at the end of this section. In section 4, we present a proof for our main Theorem \ref{Thm: maintheorem1}. We show that the long-time existence of the approximation flow $\bar\Sigma_{t,s,\tau}$ (see Appendix \ref{section: approximationflow}) for sufficiently large $t$. By {taking} $s\to 0$, we obtain a smooth solution of inverse mean curvature flow, whose slices coincide with those of the weak solution $\Sigma_t$. This gives the smoothness of $\Sigma_t$ for large time $t$. In Section 5, we show Theorem \ref{maintheorem2}.
\section{Preliminary results}

In this section, $(M,g)$ is an asymptotically hyperbolic manifold and $\Sigma_t$ denotes a weak solution of inverse mean curvature flow in $(M,g)$ with a connected $C^2$-smooth initial data.

For any closed surface $\Sigma$, we denote $\nu$ the outward unit normal of $\Sigma$ and $H$ the mean curvature of $\Sigma$ with respect to $\nu$  in $(M,g)$. We use $A$ to represent the second fundamental form of $\Sigma$ and $\mathring A$ the trace-free part of $A$. The $g$-area of $\Sigma$ will be denoted by $A(\Sigma)$.

In order to simplify the statement, we leave some basic properties of a weak solution $\Sigma_t$ to appendix \ref{Append: IMCFproperty}. Here we present some key lemmas for our later proof for the main theorem.

First, we show that the slice $\Sigma_t$ of a weak solution of inverse mean curvature flow is necessarily a $L^2$-nearly umbilical sphere for large time $t$. That is,

\begin{lemma}\label{Lem: nearlyumbilicalsphere sigmat}
There is a $T_0>0$ such that $\Sigma_t$ is topological sphere and
\begin{equation}
\int_{\Sigma_t}\|\mathring A\|^2\,\mathrm d\mu\leq CA(\Sigma_t)^{-\frac{1}{2}}
\end{equation}
for $t\geq T_0$, where $C$ is a universal constant independent of $t$.
\end{lemma}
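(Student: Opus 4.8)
The plan is to extract the needed regularity from the monotonicity of the Hawking mass together with the Gauss equation. Recall the Hawking mass
$$
m_H(\Sigma_t) = \frac{\sqrt{A(\Sigma_t)}}{(16\pi)^{3/2}}\left(16\pi - \int_{\Sigma_t}\Big(H^2 + 4\Big)\,\mathrm d\mu\right),
$$
where the constant $4$ reflects the asymptotically hyperbolic normalization (scalar curvature $\to -6$). From the Geroch-type monotonicity for weak solutions of IMCF established in \cite{HI2001}, adapted to the hyperbolic setting, $m_H(\Sigma_t)$ is nondecreasing in $t$ (using \eqref{Eq: scalarcurvature} to control the scalar curvature error term), hence bounded below by $m_H(\Sigma_0)$, a constant depending only on the initial data. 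Exponential area growth $A(\Sigma_t) = A(\Sigma_0)e^t$ (Appendix \ref{Append: IMCFproperty}) then forces
$$
16\pi - \int_{\Sigma_t}\big(H^2+4\big)\,\mathrm d\mu \;\geq\; \frac{(16\pi)^{3/2}\,m_H(\Sigma_0)}{\sqrt{A(\Sigma_t)}} \;=\; -\,C\,A(\Sigma_t)^{-1/2}
$$
for a constant $C$ independent of $t$; equivalently $\int_{\Sigma_t}(H^2+4)\,\mathrm d\mu \leq 16\pi + C A(\Sigma_t)^{-1/2}$.

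Next I would invoke the Gauss equation on $\Sigma_t \subset (M^3,g)$: for a surface in a $3$-manifold,
$$
K = \tfrac12\big(H^2 - \|A\|^2\big) + \tfrac12\Big(R_M - 2\,\Ric(\nu,\nu)\Big),
$$
and using $\|A\|^2 = \|\mathring A\|^2 + \tfrac12 H^2$ this rearranges to
$$
\|\mathring A\|^2 = \tfrac12 H^2 - 2K + \big(R_M - 2\,\Ric(\nu,\nu)\big).
$$
Writing $R_M - 2\,\Ric(\nu,\nu) = -6 + 2 + (\text{error}) = -4 + (R_M+6) - 2(\Ric(\nu,\nu)+2)$, the ambient terms contribute $-4$ plus errors that are $O(e^{-\alpha r})$ by \eqref{Eq: scalarcurvature} (and the analogous Ricci decay following from \eqref{Eq: errorQdecay}). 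Integrating and applying Gauss-Bonnet,
$$
\int_{\Sigma_t}\|\mathring A\|^2\,\mathrm d\mu = \tfrac12\int_{\Sigma_t}\big(H^2+4\big)\,\mathrm d\mu \;-\; 4\pi\chi(\Sigma_t) \;+\; \int_{\Sigma_t}(\text{error})\,\mathrm d\mu.
$$

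Now I would control the two remaining pieces. For the error integral: on $\Sigma_t$ one expects $\min_{\Sigma_t} r \to \infty$ (the flow sweeps out $M$; one can quote from Appendix \ref{Append: IMCFproperty} that $\underline r_t\to\infty$, whence $A(\Sigma_t)$ comparable to $e^{2\underline r_t}$ up to bounded factors), so $\int_{\Sigma_t}|{\rm error}|\,\mathrm d\mu \leq C\,e^{-\alpha \underline r_t} A(\Sigma_t) \leq C\, A(\Sigma_t)^{1-\alpha/2} \leq C\,A(\Sigma_t)^{-1/2}$ since $\alpha > 3$. For the Euler characteristic: plugging the upper bound $\int(H^2+4) \leq 16\pi + CA(\Sigma_t)^{-1/2}$ gives $\int\|\mathring A\|^2 \leq 8\pi + CA(\Sigma_t)^{-1/2} - 4\pi\chi(\Sigma_t) + CA(\Sigma_t)^{-1/2}$; since the left side is nonnegative this forces $\chi(\Sigma_t) \leq 2 + o(1)$, and since $\Sigma_t$ is connected (a property propagated by the weak flow, again Appendix \ref{Append: IMCFproperty}) its Euler characteristic is an even integer $\leq 2$, hence $= 2$ for $t$ large, i.e. $\Sigma_t$ is a topological sphere. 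Substituting $\chi(\Sigma_t)=2$ back in yields $\int_{\Sigma_t}\|\mathring A\|^2\,\mathrm d\mu \leq C\,A(\Sigma_t)^{-1/2}$, the claimed bound.

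The main obstacle I anticipate is the first step: establishing Geroch monotonicity of the Hawking mass for the \emph{weak} solution in the asymptotically hyperbolic setting, including a clean treatment of the jump times (where monotonicity must be checked across the replacement of $\Sigma_t$ by the boundary of its strictly outer-minimizing hull, using that this can only decrease $\int H^2$ while area is preserved) and the contribution of the scalar curvature error $R_M + 6 = O(e^{-\alpha r})$ to the monotonicity formula. This is morally in \cite{HI2001}, but adapting the constant and verifying the error term is integrable along the flow (which again uses $\alpha > 3$ together with the area growth) is the technical heart; everything after it is the Gauss-equation bookkeeping sketched above.
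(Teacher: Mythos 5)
Your overall architecture matches the paper's: obtain a uniform lower bound on the Hawking mass, translate this into an $\int_{\Sigma_t}(H^2-4)\,\mathrm d\mu \leq 16\pi + CA(\Sigma_t)^{-1/2}$ bound, and then use the Gauss equation plus Gauss--Bonnet to get the $\int\|\mathring A\|^2$ estimate once the topology is settled. However, there is a genuine gap at the topological step. From nonnegativity of $\int_{\Sigma_t}\|\mathring A\|^2$ you derive $\chi(\Sigma_t) \leq 2 + o(1)$; you then conclude ``hence $= 2$ for $t$ large.'' That inference is invalid: a connected closed orientable surface always has even Euler characteristic $\leq 2$ (it could be $0$, $-2$, $\dots$), so the bound $\chi \leq 2$ is vacuous and nonnegativity of $\int\|\mathring A\|^2$ gives no new information. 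In fact, if $\chi < 2$ the identity $\int\|\mathring A\|^2 = \tfrac{1}{2}\int(H^2-4) - 4\pi\chi + O(A^{-1/2})$ only yields an upper bound of roughly $8\pi - 4\pi\chi > 0$, not $CA^{-1/2}$. The paper closes exactly this gap with a nontrivial external input: it converts the Willmore-type energy to the hyperbolic background metric $\bar g$ (using the $C^{1,1}$ and radii estimates to control the $O(e^{-3r})(1+\|A\|^2)$ error) and then invokes Theorem A of \cite{Marques2013} (a Willmore-energy rigidity result) applied to the $W^{2,2}$ surface $\Sigma_t$ to conclude it is a topological sphere. That step is not ``Gauss-equation bookkeeping''; it is the essential topological input, and without it your argument does not close.

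Two smaller issues. First, your first step asserts monotonicity of $m_H$ along the weak flow, but the Geroch integrand contains the term $16\pi - 8\pi\chi(\Sigma_\tau)$, which can be negative if $\Sigma_\tau$ has several components --- and for a weak IMCF this can happen before the flow enters the exterior region. The paper's Lemma~\ref{Lem: hawkingmasssigmat} therefore does not claim monotonicity; it first bounds the number of components (via the $\pi_1$ argument) to control $\chi$ on the compact part and only then obtains a uniform lower bound. The conclusion you want (a $t$-independent lower bound for $m_H(\Sigma_t)$) is correct, but the route you sketch skips this. Second, your arithmetic has sign slips: the hyperbolic Hawking mass integrand is $H^2 - 4$, not $H^2 + 4$, and $R_M - 2\,\Ric(\nu,\nu) = -6 - 2(-2) = -2$, not $-4$; after correcting these your Gauss--Bonnet identity agrees with the paper's $\int\|\mathring A\|^2 = \tfrac12\int(H^2-4)\,\mathrm d\mu - 4\pi\chi + \text{error}$.
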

\begin{proof}
From Lemma \ref{Lem: hawkingmasssigmat}, there is a universal nonnegative constant $\Lambda$ so that $m_H(\Sigma_t)\geq-\Lambda$ for all $t\geq 0$, where $m_H(\Sigma_t)$ is the Hawking mass of $\Sigma_t$, defined by
\begin{equation*}
m_H(\Sigma_t)=\frac{A(\Sigma_t)^{\frac{1}{2}}}{(16\pi)^{\frac{3}{2}}}\left(16\pi-\int_{\Sigma_t}(H^2-4)\,\mathrm d\mu\right).
\end{equation*}
Therefore, we have
\begin{equation}
\int_{\Sigma_t}(H^2-4)\,\mathrm d\mu\leq 16\pi+(16\pi)^{\frac{3}{2}}\Lambda A(\Sigma_t)^{-\frac{1}{2}}.
\end{equation}
Due to Lemma \ref{Lem: pinch sigma t}, for $t$ large enough, $\Sigma_t$ is a surface in the exterior region $M\backslash U$, therefore it can be viewed as a surface in $(M\backslash U,\bar g)$. Denote $\bar H$ and $\mathrm d\bar\mu$ the weak mean curvature and the induced measure of $\Sigma_t$ in $(M\backslash U,\bar g)$. Through direct calculation, we have
\begin{equation}
\int_{\Sigma_t}(\bar H^2-4)\,\mathrm d\bar\mu=\int_{\Sigma_t}(H^2-4)\,\mathrm d\mu+\int_{\Sigma_t}\left(1+\|A\|^2\right) O(e^{-3r})\,\mathrm d\mu.
\end{equation}
Using the uniform bound for second fundamental form of $\Sigma_t$ from Lemma \ref{Lem: C11estimate}, combined with Lemma \ref{Lem: pinch sigma t} and exponential area growth of $\Sigma_t$, we deduce that
\begin{equation}
\int_{\Sigma_t}(\bar H^2-4)\,\mathrm d\bar\mu\leq 16\pi+Ce^{-\frac{t}{2}}\leq 16\pi+CA(\Sigma_t)^{-\frac{1}{2}}.
\end{equation}
Here and in the sequel, $C$ denotes a universal constant independent of $t$.
Since $\Sigma_t$ is a $W^{2,2}$ surface from Lemma \ref{Lem: W 2,p norm sigma t}, by approximation and applying Theorem A in \cite{Marques2013} {(embedded closed surface with positive genus in Euclidean $3$-space has its Willmore energy no less than $2\pi^2$) to $\Sigma_t$ as a surface in Euclidean unit ball}, we can find $T_0$ large so that $\Sigma_t$ is a topological sphere for $t\geq T_0$. Now, combined with the weak Gauss-Bonnet formula from Lemma 5.4 in \cite{HI2001},
\begin{equation}
\int_{\Sigma_t}\|\mathring A\|^2\,\mathrm d\mu=8\pi-4\pi\chi(\Sigma_t)+\int_{\Sigma_t}O(e^{-3r})\mathrm d\mu+32\pi^{\frac{3}{2}}\Lambda A(\Sigma_t)^{-\frac{1}{2}}\leq CA(\Sigma_t)^{-\frac{1}{2}},
\end{equation}
where we have used $\chi(\Sigma_t)=2$ for $t\geq T_0$.
\end{proof}

A sequence of closed surfaces $\Sigma_i$ in $(M,g)$ is called {\it exhaustion} if any compact set of $M$ {is} enclosed by  $\Sigma_i$ for sufficiently large $i$. The following proposition plays a crucial role in our work.

 \begin{proposition}\label{starshapeestimate}
Let $\{\Sigma_i\}$ be a sequence of exhausting closed surfaces in the exterior region $M\backslash U$ with uniform $C^{1,\alpha}$ {bound} for some $0<\alpha<1$ and
\begin{equation}\label{nearlyround1}
\int_{\Sigma_i} \|\mathring {\bar A}\|_{\bar g}^2 d\mu_{\bar g} \to0,\quad\text{\rm as}\quad i\to\infty,
\end{equation}
where $\bar g$ is the hyperbolic metric, $\mathring{\bar A}$ is the second fundamental form of $\Sigma_i$ with respect to $(M\backslash U,\bar g)$. Then for any $\eta >0$, there is {an} $i_0$ which depends only on $\eta$ so that for any $i\geq i_0$, we have
\begin{equation}\label{starshape1}
\left \langle v, \frac{\partial}{\partial r}\right \rangle_{g} \geq 1-\eta.
\end{equation}
\end{proposition}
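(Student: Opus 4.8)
The plan is to argue by contradiction, combined with a blow-up argument in hyperbolic space. Suppose (\ref{starshape1}) fails: there are $\eta>0$, a subsequence (not relabelled) and points $x_i\in\Sigma_i$ with $\langle v_i,\partial_r\rangle_g(x_i)\le 1-\eta$, where $v_i$ is the outer unit normal of $\Sigma_i$ and $\partial_r=\partial/\partial r$. Since $\{\Sigma_i\}$ is exhausting, $\underline r_i:=\min_{\Sigma_i}r\to\infty$, and on $\{r\ge\underline r_i\}$ one has $g=\bar g+O(e^{-3\underline r_i})$; hence $\langle v_i,\partial_r\rangle_g$ and the hyperbolic angle $\langle\bar v_i,\partial_r\rangle_{\bar g}$ differ by $o(1)$ uniformly, and it is enough to derive a contradiction for the reference metric $\bar g$, under which an exterior region embeds isometrically as the exterior of a geodesic ball in $\mathbf H^3$. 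So from now on all quantities are hyperbolic, $\langle\bar v_i,\partial_r\rangle_{\bar g}(x_i)\le 1-\tfrac{\eta}{2}$ for large $i$, and $o$ denotes the origin of the radial coordinate.

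Next I would blow up at the bad points: choose isometries $\Phi_i$ of $\mathbf H^3$ with $\Phi_i(x_i)=O$ for a fixed base point $O$, and set $\widehat\Sigma_i:=\Phi_i(\Sigma_i)$. Both the uniform $C^{1,\alpha}$ bound and $\int_{\widehat\Sigma_i}\|\mathring{\bar A}\|_{\bar g}^2\,\mathrm d\mu_{\bar g}\to 0$ are isometry invariant, so by Arzela--Ascoli a subsequence of $\widehat\Sigma_i$ converges in $C^1_{\mathrm{loc}}$ to a $C^1$ surface $\Sigma_\infty\ni O$; since the second fundamental form is stable under $C^1$-convergence in the distributional sense and the $L^2$-norm of $\mathring{\bar A}$ is lower semicontinuous, $\mathring{\bar A}_{\Sigma_\infty}\equiv 0$, and by elliptic regularity $\Sigma_\infty$ is a smooth totally umbilic surface in $\mathbf H^3$ -- thus a geodesic sphere, a horosphere, an equidistant surface, or a totally geodesic plane.

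The heart of the proof is to show $\Sigma_\infty$ is a horosphere whose normal is ``radial''. Since $o$ lies in the region enclosed by $\Sigma_i$ while $\Sigma_i\subset\{r\ge\underline r_i\}$, the surface $\Sigma_i$ encloses the geodesic ball $B(o,\underline r_i)$, so $\widehat\Sigma_i$ encloses $B(\Phi_i(o),\underline r_i)$; because $d(O,\Phi_i(o))=d(x_i,o)=r(x_i)\ge\underline r_i\to\infty$, after a further subsequence $\Phi_i(o)\to\xi\in\partial_\infty\mathbf H^3$ and $(\Phi_i)_*\partial_r\to\bar\nabla b_\xi$ locally uniformly, $b_\xi$ being the Busemann function for $\xi$. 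To decide which umbilic surface $\Sigma_\infty$ is, I would invoke the quantitative umbilicity estimate of De Lellis--Müller (compare the use of \cite{Marques2013} in Lemma \ref{Lem: nearlyumbilicalsphere sigmat}): because $\int_\Sigma\|\mathring A\|^2\,\mathrm d\mu$ is a conformal invariant, hypothesis (\ref{nearlyround1}) says each $\Sigma_i$ is $L^2$-umbilical also for the flat metric of the conformal ball model, hence $W^{2,2}$-close, modulo scaling, to a round sphere; equivalently $\widehat\Sigma_i$ is $C^0$-close to a geodesic sphere of radius $R_i\to\infty$ passing near $O$ (the radius diverges since the $\Sigma_i$ exhaust $\mathbf H^3$), and such spheres converge to a horosphere through $O$ whose centre at infinity, by the hyperbolic law of cosines in the triangle with vertices $x_i$, $o$ and the centre of that sphere together with $\underline r_i\to\infty$, must coincide with $\xi$. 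Matching the $C^0$- and $C^1$-limits, $\Sigma_\infty$ is the horosphere $\{b_\xi=b_\xi(O)\}$ (with the horoball towards $\xi$ as enclosed side), so $\bar v_{\Sigma_\infty}(O)=\bar\nabla b_\xi(O)$ and $\langle\bar v_{\Sigma_\infty}(O),\bar\nabla b_\xi(O)\rangle_{\bar g}=1$; but passing to the limit in $\langle\bar v_i,\partial_r\rangle_{\bar g}(x_i)\le 1-\tfrac{\eta}{2}$ would force $\langle\bar v_{\Sigma_\infty}(O),\bar\nabla b_\xi(O)\rangle_{\bar g}\le 1-\tfrac{\eta}{2}$, contradicting the value $1$ just obtained.

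I expect the identification step to be the main obstacle. Hypothesis (\ref{nearlyround1}) gives only integral, not pointwise, control of $\mathring{\bar A}$, so the blow-up by itself yields merely \emph{some} totally umbilic limit, and a priori an off-centre geodesic sphere, an equidistant surface or a totally geodesic plane are not excluded -- a nearly umbilical surface can carry a small ``bump'' that is far from star-shaped. Eliminating these possibilities forces one to use the De Lellis--Müller proximity to a genuine round sphere, upgraded from $W^{2,2}$- to $C^0$-closeness of the surfaces by the uniform $C^{1,\alpha}$ bound, and applied conformally so that it survives the escape of the $\Sigma_i$ to infinity (where $\bar g$ and the flat ball-model metric differ by a large conformal factor); and then to keep careful track of the ``centre at infinity'' through the isometries $\Phi_i$, where the exhausting hypothesis -- which is exactly what makes $\underline r_i\to\infty$ and the comparison spheres become centred near $o$ -- does the essential work.
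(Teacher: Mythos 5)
Your proposal follows essentially the same route as the paper: reduce to the hyperbolic metric via the decay of $Q$, normalize the bad point to a fixed base point by an isometry of $\mathbf{H}^3$, pass to the Euclidean ball model and invoke conformal invariance of $\int\|\mathring A\|^2$ together with the De Lellis--M\"uller rigidity estimate (the paper's \cite{LM2005}) to identify the $C^0$-limit as a horosphere, and then use the uniform $C^{1,\alpha}$ bound to upgrade to $C^1$-convergence so that the normal at the base point converges to the (radial) horospherical normal. The only cosmetic differences are that you argue with arbitrary bad points rather than the minimizing point and phrase the limiting radial direction in terms of Busemann functions rather than the limiting geodesic ray in the ball model, but the underlying mechanism and the use of the two key ingredients (conformal invariance plus De Lellis--M\"uller, and $C^{1,\alpha}\Rightarrow C^1_{\mathrm{loc}}$ convergence of normals) coincide with the paper's proof.
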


\begin{remark}
When surfaces $\Sigma_i$ are slices of a weak solution of inverse mean curvature flow, (\ref{nearlyround1}) can be obtained from
$$
\int_{\Sigma_i} \|\mathring { A}\|_g^2 d\mu_{ g} \leq C A(\Sigma_i)^{-\frac12},
$$
combined with Lemma \ref{Lem: pinch sigma t}, Lemma \ref{Lem: C11estimate} and exponential area growth {under} the inverse mean curvature flow.
\end{remark}
\begin{proof}
It is not difficult to see that $\left <v, \frac{\partial}{\partial r}\right >_g$ is very close to $\left <\bar v, \frac{\partial}{\partial r}\right >_{\bar g}$ for sufficiently large $i$, here $\bar v$ is the unit outward normal vector of $\Sigma_i$ with respect to $\bar g$. With this fact in mind, it suffices to show the conclusion of Proposition \ref{starshapeestimate} {in the case of hyperbolic space $\mathbb{H}^3$}. Here and in the sequel, we regard $(M\backslash U,\bar g)$ as $\mathbb{H}^3$.  Now, let $p_i$ be the point in $\Sigma_i$ at which the minimum of $\left <\bar v, \frac{\partial}{\partial r}\right >_{\bar g}$ is achieved, and $T^i:\,\mathbb{H}^3 \to \mathbb{H}^3$ be an isometric transformation with $T^i (p_i)=p$, where $p$ is a fixed point in $\mathbb{H}^3$. Then we are going to show
\begin{equation}\label{starshape2}
\left <T^i _* (\bar v), T^i _* (\frac{\partial}{\partial r})\right >_{\bar g} (p)  \geq 1-\eta.
\end{equation}

Let $o\in \mathbb{H}^3$ {be a point} so that the direction  of the geodesic $\gamma_i$  in  $\mathbb{H}^3$  joining $T^i (o)$ and $p$ is $T^i _* (\frac{\partial}{\partial r})$ at $p$. As $p_i$ diverges to the infinity of $\mathbb{H}^3$, so does $T^i (o)=q_i$.  We adopt ball model for $\mathbb{H}^3$, then $S_i=T^i (\Sigma_i)$ can also be regarded {as} a surface in the unit ball $\mathbb{B}^3$ in $\mathbb{R}^3$, then $p\in S_i$ and $q_i$ is enclosed by $S_i$ and converges to the boundary of $\mathbb{B}^3$. By (\ref{nearlyround1}) and the conformal invariance, we have
\begin{equation}\label{nearlyround3}
\int_{S_i} \|\mathring {\bar{ \bar A}}\|^2 d\mu_0  \to 0,\quad\text{\rm as}\quad i\to\infty,
\end{equation}
where  $ \mathring {\bar{ \bar A}}$  denotes   the trace-free part of the second fundamental forms of $S_i$ in $\mathbb{B}^3$ with the standard Euclidean metric. Together {with these facts and Theorem 1.1} in \cite{LM2005}, we see that  the area of $S_i$ with respect to the Euclidean metric has uniformly lower bound. Again by Theorem 1.1 in \cite{LM2005}, we deduce that, after passing to a subsequence, $S_i$ converges in $C^0$ sense  to the round sphere  $S \subset \mathbb{B}^3  $ which passes  through $p$ and $q=  S\cap \partial\mathbb{B}^3 $, {where $q$ is the limit of $q_i$.} Thus $\gamma_i$ converges to the geodesic line $\gamma$ which joins the infinity point $q$ and $p$, and the direction of $\gamma$ at $p$ is $ \frac{\partial}{\partial r}$. The situation is briefly illustrated by Figure \ref{Figure:1} on next page. Note that $S$ is actually a horosphere of $\mathbb{H}^3$.
\begin{figure}[htbp]
\centering
\includegraphics[width=7cm]{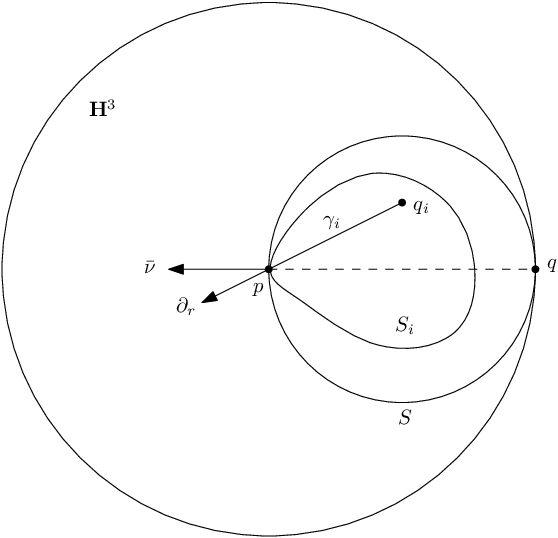}
\caption{Surface $S_i$ in ball model}
\label{Figure:1}
\end{figure}

Next, we are going to show that  the outward unit normal vector of $S_i$ at $p$ with respect to the hyperbolic metric $\bar g$ converges to that of $S$ at $p$ in $C^0$ sense. It is easy to see that $\Sigma_i$ enjoys uniform $C^{1, \alpha}$-estimate with respect to the hyperbolic metric $\bar g$ and so does $S_i$, then by choosing  a subsequence, we may assume $S_i$ locally $C^1$ converges to the limit surface. Due to the above discussion of convergence of surfaces in  $\mathbb{B}^3$, we see that the limit surface is the horophere sphere $S$, which implies

$$
\left <\bar v, \frac{\partial}{\partial r}\right >_{\bar g} (p)  =1.
$$
Therefore (\ref{starshape2}) is true and we get the conclusion of Proposition \ref{starshapeestimate}.
\end{proof}

After applying above proposition to $\Sigma_t$, we have the following corollary.

\begin{corollary}\label{Cor: angleestimatesigmat}
For any $\eta>0$, there is a $T_0=T_0(\eta)$ so that, for $t\geq T_0$, $\Sigma_t$ satisfies
\begin{equation*}
\left\langle\nu,\frac{\partial}{\partial r}\right\rangle_g\geq 1-\eta.
\end{equation*}
\end{corollary}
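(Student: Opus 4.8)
The plan is to deduce this as a direct corollary of Proposition~\ref{starshapeestimate}, applied to the slices $\Sigma_t$. Since that proposition is phrased for exhausting \emph{sequences} of surfaces while the statement here concerns the whole family $\{\Sigma_t\}_{t\geq T_0}$, I would argue by contradiction: suppose the conclusion fails for some fixed $\eta>0$. Then there is a sequence of times $t_i\to\infty$ and points $p_i\in\Sigma_{t_i}$ with $\left\langle\nu,\frac{\partial}{\partial r}\right\rangle_g(p_i)<1-\eta$. It suffices to check that $\{\Sigma_{t_i}\}$ satisfies all the hypotheses of Proposition~\ref{starshapeestimate}; then for $i$ large we get $\left\langle\nu,\frac{\partial}{\partial r}\right\rangle_g\geq 1-\eta$ everywhere on $\Sigma_{t_i}$, contradicting the choice of $p_i$.

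To verify the hypotheses, first note that $\{\Sigma_{t_i}\}$ is an exhausting sequence lying in the exterior region $M\setminus U$ for $i$ large: the regions enclosed by $\Sigma_t$ are increasing in $t$ and, by the exponential area growth of the flow together with Lemma~\ref{Lem: pinch sigma t}, they exhaust $M$, with each $\Sigma_{t_i}$ eventually contained in $M\setminus U$. Second, by Lemma~\ref{Lem: C11estimate} (together with the $C^{1,\alpha}$ regularity established in \cite{HI2001}) the slices carry uniform $C^{1,\alpha}$ estimates. Third, and this is the key point, Lemma~\ref{Lem: nearlyumbilicalsphere sigmat} gives $\int_{\Sigma_t}\|\mathring A\|_g^2\,\mathrm d\mu\leq CA(\Sigma_t)^{-\frac12}$ for $t\geq T_0$; converting the integrand from $g$ to the hyperbolic metric $\bar g$ using the decay estimate \eqref{Eq: errorQdecay}, the uniform bound on $A$ from Lemma~\ref{Lem: C11estimate}, the pinching of Lemma~\ref{Lem: pinch sigma t}, and once more the exponential area growth, yields
$$
\int_{\Sigma_{t_i}}\|\mathring{\bar A}\|_{\bar g}^2\,\mathrm d\mu_{\bar g}\leq Ce^{-\frac{t_i}{2}}\longrightarrow 0,
$$
which is precisely \eqref{nearlyround1} for the sequence $\{\Sigma_{t_i}\}$, as already observed in the Remark following Proposition~\ref{starshapeestimate}.

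With all three hypotheses in hand, Proposition~\ref{starshapeestimate} applies to $\{\Sigma_{t_i}\}$ with the given $\eta$, producing $i_0$ (depending only on $\eta$) so that $\left\langle\nu,\frac{\partial}{\partial r}\right\rangle_g\geq 1-\eta$ on $\Sigma_{t_i}$ for all $i\geq i_0$. This contradicts $\left\langle\nu,\frac{\partial}{\partial r}\right\rangle_g(p_i)<1-\eta$, completing the argument; the resulting threshold $T_0$ depends only on $\eta$.

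The only step requiring any care is the passage from the intrinsic estimate on $\|\mathring A\|_g$ to the hyperbolic-metric quantity $\|\mathring{\bar A}\|_{\bar g}$ appearing in Proposition~\ref{starshapeestimate}, since this is where one must simultaneously use the error decay of the metric, the uniform second fundamental form bound, and the area growth rate; everything else is a bookkeeping application of the appendix properties of the weak flow. I do not expect this to present a genuine obstacle, as the same conversion is already indicated in the Remark above.
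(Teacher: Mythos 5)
Your proposal is correct and follows essentially the same route as the paper: a contradiction argument along a sequence $t_i\to\infty$, with the key input being the conversion of the intrinsic bound $\int_{\Sigma_t}\|\mathring A\|_g^2\,\mathrm d\mu\leq CA(\Sigma_t)^{-1/2}$ from Lemma~\ref{Lem: nearlyumbilicalsphere sigmat} into the hyperbolic-metric estimate $\int_{\Sigma_{t_i}}\|\mathring{\bar A}\|_{\bar g}^2\,\mathrm d\mu_{\bar g}\to 0$ via the decay of $Q$, the uniform $\|A\|$ bound, the radius pinching, and the exponential area growth, followed by an application of Proposition~\ref{starshapeestimate}. The only cosmetic difference is that the paper cites Lemma~\ref{Lem: W 2,p norm sigma t} for the uniform $C^{1,\alpha}$ hypothesis of Proposition~\ref{starshapeestimate}, which is the same regularity you invoke via \cite{HI2001}.
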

\begin{proof}
Direct calculation gives
\begin{equation}\label{Eq: starshapesigmat1}
\int_{\Sigma_t}\|\mathring{\bar A}\|^2_{\bar g}\mathrm d\mu_{\bar g}=\int_{\Sigma_t}\|\mathring A\|_g^2\mathrm d\mu_g+\int_{\Sigma_t}\left(1+\|A\|_g^2\right)O(e^{-3r})\mathrm d\mu_g.
\end{equation}
Using Lemma \ref{Lem: pinch sigma t} and the uniform bound for second fundamental form of $\Sigma_t$ from Lemma \ref{Lem: C11estimate}, combined with the exponential growth of area $A(\Sigma_t)=A(\Sigma_0)e^t$, we obtain
\begin{equation}\label{Eq: starshapesigmat2}
\int_{\Sigma_t}\left(1+\|A\|_g^2\right)O(e^{-3r})\mathrm d\mu_g\leq Ce^{-\frac{3}{2}t} A(\Sigma_t)\leq CA(\Sigma_t)^{-\frac{1}{2}}.
\end{equation}
Here and in the sequel, $C$ is a universal constant independent of $t$. Combining (\ref{Eq: starshapesigmat1}), (\ref{Eq: starshapesigmat2}) and Lemma \ref{Lem: nearlyumbilicalsphere sigmat}, we have
\begin{equation}\label{Eq: starshapesigmat3}
\int_{\Sigma_t}\|\mathring{\bar A}\|^2_{\bar g}\mathrm d\mu_{\bar g}\leq CA(\Sigma_t)^{-\frac{1}{2}}.
\end{equation}
Now, the corollary follows from a contradiction argument. Suppose the consequence does not hold, then there exists a sequence of surfaces $\Sigma_{t_i}$ with $t_i\to+\infty$, which satisfies
\begin{equation}\label{Eq: starshapesigmat4}
\langle\frac{\partial}{\partial r},\nu\rangle_g< 1-\eta.
\end{equation}
From (\ref{Eq: starshapesigmat3}) and Lemma \ref{Lem: W 2,p norm sigma t}, we can apply Proposition \ref{starshapeestimate} to surfaces $\Sigma_{t_i}$, which leads to a contradiction to (\ref{Eq: starshapesigmat4}).
\end{proof}

\section{Stampacchia Iteration}
In this section, $(M,g)$ denotes an asymptotically hyperbolic manifold and $\bar\Sigma_t$ denotes a smooth solution of inverse mean curvature flow. With the idea in \cite{HI2008}, we use the Stampacchia iteration process to derive a lower bound for mean curvature of slices $\bar\Sigma_t$, which is independent of the mean curvature bound of the initial data surface. In asymptotically hyperbolic manifolds, the process can be simplified due to the negativity of the Ricci tensor, which was observed in \cite{LW2017}.

\begin{proposition}\label{Prop: lower bound mean curvature}
Given positive constants $\delta_0$, $C_1$ and $C_2$,
there is a universal constant $R_0=R_0(\delta_0,C_2)>0$ so that if $\bar\Sigma_{t}$, $0\leq t< T$, is a smooth inverse mean curvature flow enclosing $B_{R_0}$ with $\langle \nu,\frac{\partial}{\partial r}\rangle\geq \delta_0$  for each slice $\bar\Sigma_t$, and if $\bar\Sigma_0$ satisfies
$\bar r_0-\underline r_0\leq C_1$ and $\max_{\bar\Sigma_0}\|A\|\leq C_2,
$
then
\begin{equation}
H\geq C(\delta_0,C_1,C_2)\min\{1,t^{\frac{1}{2}}\},
\end{equation}
for any $t\in [0, T)$.
\end{proposition}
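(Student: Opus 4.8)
The plan is to adapt the Stampacchia iteration argument of Huisken--Ilmanen (see \cite{HI2008}) to the asymptotically hyperbolic setting, exploiting the negativity of the ambient Ricci curvature as in \cite{LW2017}. The central object is the evolution equation for the mean curvature along a smooth inverse mean curvature flow,
$$
\frac{\partial H}{\partial t}=\frac{\Delta H}{H^2}-2\frac{|\nabla H|^2}{H^3}-\frac{\|A\|^2+\Ric(\nu,\nu)}{H}.
$$
First I would record, from the asymptotically hyperbolic condition \eqref{Eq: AHmetric}--\eqref{Eq: errorQdecay}, that on the region enclosing $B_{R_0}$ one has $\Ric(\nu,\nu)\leq -2+Ce^{-3r}\leq -1$ once $R_0$ is chosen large depending on nothing, so that $\|A\|^2+\Ric(\nu,\nu)\geq \|A\|^2-2$ and, crucially, $-(\|A\|^2+\Ric(\nu,\nu))/H\leq ( - \frac12 H^2 + 2)/H$ using $\|A\|^2\geq \frac12 H^2$ on the slices (this uses the two-dimensionality of $\Sigma$, where $\|A\|^2=\frac12H^2+\|\mathring A\|^2\geq\frac12 H^2$). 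This already gives a differential inequality of the form $\partial_t H\leq \frac{\Delta H}{H^2}-2\frac{|\nabla H|^2}{H^3}-\frac12 H+\frac{2}{H}$, from which a barrier/maximum-principle comparison produces an \emph{a priori upper bound} $H\leq C$ for $t$ bounded away from $0$; more importantly, the same sign structure is what makes the iteration for the lower bound work without reference to $\min_{\bar\Sigma_0}H$.

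Next I would set up the Stampacchia scheme for the lower bound. Fix $\beta>0$ small, and for a level $k>0$ consider the test function built from $(k-H)_+$ (or $\varphi=(u-k)_+$ with $u$ an appropriate power of $H^{-1}$, following \cite{HI2008} p.~two of the regularity section), integrate its evolution over $\Sigma_t$, and use the divergence structure together with $\frac{d}{dt}\,d\mu=d\mu$ (the area element grows like $e^t$) to get an energy inequality on the superlevel sets $A(k,t)=\{H<k\}$. The hypotheses enter as follows: $\langle\nu,\partial_r\rangle\geq\delta_0$ gives the slices a uniform inner/outer radius comparison, so together with $\bar r_0-\underline r_0\leq C_1$ and the fact that under IMCF $\bar r_t-\underline r_t$ is non-increasing (a standard consequence of the star-shapedness, or can be taken from the properties collected in the appendix), one controls the extrinsic geometry uniformly: in particular the isoperimetric constant of $\Sigma_t$ and hence a Michael--Simon--Sobolev inequality with uniform constant, which is exactly what the Stampacchia lemma needs. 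The bound $\max_{\bar\Sigma_0}\|A\|\leq C_2$ is used only to start the iteration at $t=0$; the final constant depends on $C_1,C_2,\delta_0$ but the \emph{rate} $\min\{1,t^{1/2}\}$ comes from the short-time behavior where one competes against the initial layer.

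The main obstacle, and the place where genuine care is needed, is precisely the point flagged in the introduction: because $(M^3,g)$ is not a warped product, the second fundamental form $A$ of the slices appears inside the error terms of the evolution equations (e.g.\ in $\nabla\Ric$ terms and in the commutator identities), so the differential inequality for $H$ is not clean --- one picks up terms like $C(1+\|A\|^2)e^{-3r}/H$ and $Ce^{-3r}|\nabla A|/H^2$. To absorb these I would use the a priori bound $\|A\|\leq C$ that should already be available (Corollary~\ref{Cor: interiorestimateh} gives a uniform upper bound on $\|A\|$ for the smooth flow under these hypotheses), together with the fact that $r\to\infty$ uniformly on $\Sigma_t$ so $e^{-3r}\leq Ce^{-t}$ on the flow, making all these error contributions exponentially small and harmless in the iteration; the sign-definite term $-\frac12 H$ dominates them for $R_0$ (equivalently $t$, after an initial time) large. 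A secondary technical point is that the Sobolev constant must be shown uniform in $t$ along the whole flow, not just initially; this follows from the uniform $C^{1,\alpha}$ control of the slices and the monotonicity $\bar r_t-\underline r_t\leq C_1$, which keeps the slices from degenerating. Once the error terms are controlled and the Sobolev inequality is uniform, the Stampacchia iteration runs verbatim as in \cite{HI2008}, \cite{LW2017} and yields the asserted lower bound $H\geq C(\delta_0,C_1,C_2)\min\{1,t^{1/2}\}$.
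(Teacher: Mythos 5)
Your overall direction is right --- the paper does attribute the idea to the Stampacchia process of \cite{HI2008}, and it does invoke the negativity of $\Ric$ in the spirit of \cite{LW2017} --- but you are proposing a heavier argument than the one the paper actually uses, and you are missing the specific mechanism that makes the paper's version work. The paper does not run a genuine De Giorgi/Stampacchia iteration and never invokes a Michael--Simon--Sobolev inequality. Instead it picks the single test function
$$v=(t-t_0)^{\frac12}H^{-1}w^{-1},\qquad w=\sinh r\,\langle \partial_r,\nu\rangle,$$
combines the evolution equations for $w$ and $H^{-1}$ from Lemma~\ref{Lem: evolutionequation}, and, for the truncation $v_k=\max\{v-k,0\}$, shows directly that $\tfrac{d}{dt}\int_{\bar\Sigma_t}v_k^2\,d\mu\le 0$ once $k=\delta_0^{-1}\sinh^{-1}\underline r_{t_0}\max\{t_1-t_0,1\}$ and $R_0$ is chosen large enough that $\Ric(\nu,\nu)+C(\delta_0,C_2)w^{-1}e^{-2r}\le -1$. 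Because of the $(t-t_0)^{1/2}$ prefactor $v_k\equiv 0$ at $t=t_0$, so $v\le k$ for all $t\in[t_0,t_1]$; choosing $t_0=t_1/2$ (for $t_1\le 2$) or $t_0=t_1-1$ (otherwise) and unwinding the definition of $v$ with the radii estimate from Lemma~\ref{Lem: pinch lemma} gives the claimed bound. This is a one-step energy argument; there are no higher levels, no superlevel-set measure estimates, and no isoperimetric/Sobolev input.

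The concrete gaps in your proposal are as follows. First, your candidate test functions --- $(k-H)_+$ or $(u-k)_+$ for a power of $H^{-1}$ --- omit the $w^{-1}$ factor, which is essential: it is the combination $Hw$ whose evolution has the right sign structure for the Ricci term and the $O(e^{-2r})$ error to come out negative, and it is the $(t-t_0)^{1/2}$ weight that makes the argument independent of $\min_{\bar\Sigma_0}H$. Second, your plan to invoke a Michael--Simon--Sobolev inequality with uniform constant along the flow is not needed here and would require significant additional justification; the paper's point is precisely that Ricci negativity lets you avoid it. Third, some of the supporting remarks are off: the error term in the evolution of $w$ is $O(e^{-2r})$, not $O(e^{-3r})$; $\bar r_t-\underline r_t$ is not monotone non-increasing along IMCF --- what Lemma~\ref{Lem: pinch lemma} gives is that $\bar r_t-\underline r_t$ stays bounded by $\bar r_0-\underline r_0+2C_0$ --- and that bound, not monotonicity, is what feeds into the final constant; and the inequality $\|A\|^2\ge\frac12 H^2$ by itself does not yield a lower bound for $H$ by maximum principle, because $\|A\|^2$ can be large at points where $H$ is small, so the term $-(\|A\|^2+\Ric)/H$ has the wrong sign exactly where you would need it to help. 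The uniform $\|A\|\le C(\delta_0,C_2)$ from Corollary~\ref{Cor: interiorestimateh} is the input used, as you correctly anticipate, but it is used to absorb the error terms into the $\Ric$ sign, not to run an iteration.
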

\begin{proof}
From Lemma \ref{Lem: upper bound mean curvature} and Corollary \ref{Cor: interiorestimateh}, slices $\bar\Sigma_{t}$ satisfies $\|A\|\leq C(\delta_0,C_2)$. Combined with Lemma \ref{Lem: evolutionequation}, we have
\begin{equation*}
\left(\frac{\partial}{\partial t}-\frac{1}{H^2}\Delta\right)w=\frac{\|A\|^2}{H^2}w+\frac{1} {H^2}O(e^{-2r})
\end{equation*}
and
\begin{equation*}
\left(\frac{\partial}{\partial t}-\frac{1}{H^2}\Delta_{\bar\Sigma_t}\right)H^{-1}=\frac{\|A\|^2}{H^2}H^{-1}+ \frac{\Ric(\nu,\nu)}{H^2}H^{-1},
\end{equation*}
where $w=\sinh r\langle \frac{\partial}{\partial r},\nu\rangle$.
We consider the slices $\bar\Sigma_t$ with $t\in[t_0,t_1]$, where $t_0$ and $t_1$ are positive constants to be determined. Denote $v=(t-t_0)^{\frac{1}{2}}H^{-1}w^{-1}$ and $v_k=\max\{v-k,0\}$. Direct calculation shows
\begin{equation}
\begin{split}
\frac{\partial}{\partial t}v\leq \text{\rm div}&\left(\frac{1}{H^2}\nabla v\right)-\frac{1}{H^2}v^{-1}|\nabla v|^2+\frac{1}{2}(t-t_0)^{-1}v\\
&\qquad+(t-t_0)^{-1}v^3w^2\left(\Ric(\nu,\nu)+C(\delta_0,C_2)w^{-1}e^{-2r} \right),\quad\forall\, t_0<t\leq t_1.
\end{split}
\end{equation}
Using the fact that $\delta_0\sinh r\leq w\leq \sinh r$, by taking $R_0=R_0(\delta_0,C_2)$ sufficiently large, we can guarantee that
\begin{equation*}
\Ric(\nu,\nu)+C(\delta_0,C_2)w^{-1}e^{-2r}\leq -1.
\end{equation*}
Furthermore, there holds
\begin{equation}
\begin{split}
\frac{\mathrm d}{\mathrm dt}\int_{\bar\Sigma_t}v_k^2\mathrm d\mu\leq (t-t_0)^{-1}\int_{\Omega_t(k)}v_k&v\,\mathrm d\mu+\int_{\Omega_t(k)}v_k^2\,\mathrm d\mu\\
&-2(t-t_0)^{-1}\delta_0^2\sinh^2\underline r_t\int_{\Omega_t(k)}v_kv^3\,\mathrm d\mu,
\end{split}
\end{equation}
where $\Omega_t(k)=\{x\in\bar\Sigma_t:v\geq k\}$. Taking
$
k=\delta_0^{-1}\sinh^{-1}\underline r_{t_0}\max\{t_1-t_0,1\},
$
then
\begin{equation}
\frac{\mathrm d}{\mathrm dt}\int_{\bar\Sigma_t}v_k^2\,\mathrm d\mu\leq 0.
\end{equation}
Notice that $v_k\equiv 0$ on $\bar\Sigma_{t_0}$, we have $v_k\equiv 0$ for $\bar\Sigma_t$ with $t\in[t_0,t_1]$, which implies $v\leq k$ on these slices.

For the choice for $t_0$ and $t_1$, we divide into two cases. If $0< t_1\leq 2$, we choose $t_0=t_1/2$. Otherwise, $t_0=t_1-1$. In both cases, $k$ can be taken to be $\delta_0^{-1}\sinh^{-1}\underline r_{t_0}$. Combined the fact $v\leq k$ on $\bar\Sigma_{t_1}$ with the definition of $v$, using also the radii estimate in Lemma \ref{Lem: pinch lemma}, we conclude that on surface $\Sigma_{t_1}$
\begin{equation}
H\geq C(\delta_0,C_1,C_2)\min\{t_1^{\frac{1}{2}},1\},
\end{equation}
which completes the proof.
\end{proof}

\begin{lemma}\label{Lem: extension IMCF}
Let $\bar\Sigma_t$, $0\leq t<T<+\infty$, be a smooth inverse mean curvature flow in $(M,g)$. If there exist $c_0$ and $c_1$ such that $H\geq c_0>0$ and $\|A\|\leq c_1$, then $\bar\Sigma_t$ can be extended beyond the time $T$.
\end{lemma}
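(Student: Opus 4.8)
The plan is to run a standard continuation argument: show that under the bounds $H\geq c_0>0$ and $\|A\|\leq c_1$ on $[0,T)$, the flow converges smoothly to a limit surface $\bar\Sigma_T$, which is again a smooth strictly mean convex embedded surface, and then restart the flow from $\bar\Sigma_T$ to get existence on $[0,T+\varepsilon)$. The reparametrized IMCF is a quasilinear parabolic system; writing the flow as a graph (locally, or globally using the star-shapedness/angle control available in this setting) over a fixed surface, the equation takes the form $\partial_t u = F(x,u,\nabla u,\nabla^2 u)$ with ellipticity constants controlled by $1/H^2$ and by $\|A\|$. The lower bound $H\geq c_0$ keeps the equation uniformly parabolic (no degeneration of $1/H^2$), and $\|A\|\leq c_1$ together with the metric bounds of $(M,g)$ controls the $C^1$-norm of the solution; these are exactly the ingredients needed to bootstrap.

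First I would observe that the area grows as $A(\bar\Sigma_t)=A(\bar\Sigma_0)e^t$, so with $T<\infty$ the areas stay bounded, and the speed $H^{-1}\leq c_0^{-1}$ is bounded, hence the surfaces $\bar\Sigma_t$ all lie in a fixed compact region $K\subset M$ and the parametrization map $F(\cdot,t)$ has uniformly bounded displacement; in particular $F(\cdot,t)$ converges uniformly as $t\to T$ to some Lipschitz map $F(\cdot,T)$. Next, from $\|A\|\leq c_1$ I would get a uniform interior $C^{1,\beta}$ bound (for all $t$, not just large $t$) on the surfaces, so $F(\cdot,T)$ is a $C^{1,\beta}$ embedded surface and $\bar\Sigma_t\to\bar\Sigma_T$ in $C^{1,\beta}$. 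Then I would feed the uniform parabolicity ($c_0\leq H$, $H\leq C$ from Lemma \ref{Lem: upper bound mean curvature}) and the $C^{1,\beta}$ bound into Krylov–Safonov / Schauder parabolic theory — exactly the Krylov regularity theory already invoked in Section 3 — applied to the graph equation over a fixed-time slice, to obtain uniform $C^{2,\beta}$ and then, by differentiating the equation and bootstrapping, uniform $C^{k}$ bounds on $\bar\Sigma_t$ on $[T/2,T)$. This shows $\bar\Sigma_t\to\bar\Sigma_T$ smoothly, with $\bar\Sigma_T$ a smooth embedded surface satisfying $H\geq c_0>0$, so it is strictly mean convex.

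Finally, since $\bar\Sigma_T$ is a smooth strictly mean convex embedded surface, short-time existence for smooth IMCF (the same local existence used to start the flow in the first place) gives a smooth solution on $[T,T+\varepsilon)$ for some $\varepsilon>0$; gluing it to the original flow at $t=T$ produces a smooth IMCF on $[0,T+\varepsilon)$, i.e. an extension past $T$. I would remark that one must check the glued family is genuinely $C^\infty$ in $t$ at $t=T$: this follows because the limit $\bar\Sigma_T$ was obtained as a smooth limit with all $t$-derivatives of $F$ controlled via the evolution equation, so the one-sided time derivatives from the left match those from the right (both determined by $H^{-1}\nu$ and its covariant derivatives on $\bar\Sigma_T$).

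The main obstacle is the passage from the a priori bounds to genuine smooth convergence $\bar\Sigma_t\to\bar\Sigma_T$: one needs the full Krylov-type $C^{2,\beta}$ estimate for the quasilinear parabolic graph equation that is uniform up to time $T$, which requires having first pinned down that the surfaces stay in a compact set and admit a uniform choice of graph coordinates over a fixed reference surface — the latter is where the uniform $C^{1,\beta}$ control coming from $\|A\|\leq c_1$ (rather than merely $C^0$ convergence) is essential, since without a uniform gradient bound the graph equation could degenerate in its ellipticity in a way not controlled by $H\geq c_0$ alone. Everything after the smooth limit is standard.
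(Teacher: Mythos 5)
Your proposal is correct and takes essentially the same route as the paper's own proof, which simply invokes the Krylov regularity theorem (Page 253 of \cite{K1987}) to get smooth convergence to a limit surface $\bar\Sigma_T$ with $H\geq c_0$ and then appeals to short-time existence for smooth mean-convex initial data. What you have done is spell out the ingredients the paper leaves implicit --- that the bounded speed and finite $T$ confine the flow to a compact region, that $\|A\|\leq c_1$ gives uniform $C^{1,\beta}$ control so the surfaces can be written as uniformly non-degenerate graphs, and that $H\geq c_0$ supplies the uniform parabolicity needed for the Krylov/Schauder bootstrap --- so there is no gap, just added detail.
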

\begin{proof}
The theorem {on} Page 253 of \cite{K1987} guarantees higher regularity of the solution, therefore $\Sigma_t$ converges smoothly to a smooth limit surface $\Sigma_T$ with mean curvature $H\geq c_0$. Then the short time existence of solution to (\ref{def: IMCF}) in case of smooth initial data surface with positive mean curvature yields the desired extension.
\end{proof}

\section{Proof for Theorem \ref{Thm: maintheorem1}}
In this section, $(M,g)$ is {assumed to be} an asymptotically hyperbolic manifold and $\Sigma_t$ is a weak solution of inverse mean curvature flow in $(M,g)$.
For large $t$ and small $s$, $\Sigma_{t,s}$ represents the slice of the mean curvature flow with initial data surface $\Sigma_t$ at time $s>0$. Since $\Sigma_{t,s}$ is smooth with strictly positive mean curvature,  $\Sigma_{t,s}$ can be taken as an initial data surface of a smooth solution of inverse mean curvature flow. Denote the slice of the solution at time $\tau>0$ by $\bar\Sigma_{t,s,\tau}$ and $\tau_0(t,s)$ the maximum existence time for $\bar\Sigma_{t,s,\tau}$.

\begin{proposition}\label{Prop: barsigmatstau}
For any $0<\delta_0<1$, there is a $T_0=T_0(\delta_0)>0$ such that for any $\Sigma_{t,s}$ with $t\geq T_0$, the smooth solution $\bar\Sigma_{t,s,\tau}$ of inverse mean  curvature flow with initial data surface $\Sigma_{t,s}$ exists for all $\tau>0$ and satisfies
\begin{equation}
\langle\frac{\partial}{\partial r},\nu\rangle\geq\delta_0.
\end{equation}
\end{proposition}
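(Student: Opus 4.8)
The plan is to argue by contradiction, running the same "nearly-umbilical $\Rightarrow$ star-shaped" machinery (Proposition~\ref{starshapeestimate}) along the approximating flow $\bar\Sigma_{t,s,\tau}$, while using the $a$~priori curvature bounds of Section~3 to keep the flow smooth and to feed the hypotheses of that proposition. Suppose the statement fails for some fixed $\delta_0\in(0,1)$: then there are sequences $t_i\to\infty$, $s_i\to 0$ (or fixed small $s$), and times $\tau_i<\tau_0(t_i,s_i)$ with $\langle\partial_r,\nu\rangle<\delta_0$ somewhere on $\bar\Sigma_{t_i,s_i,\tau_i}$, and we may take $\tau_i$ minimal with this property. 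First I would record the two continuity inputs: by Corollary~\ref{Cor: angleestimatesigmat} and the construction of $\Sigma_{t,s}$ via a short mean curvature flow (Lemma~\ref{Lem: MFCsigmats}), the initial slice $\Sigma_{t,s}$ satisfies $\langle\partial_r,\nu\rangle\geq 1-\eta$ for $t$ large and $s$ small, with $\eta$ as small as we like; in particular the angle condition holds strictly better than $\delta_0$ at $\tau=0$, so $\tau_i>0$ and on $[0,\tau_i]$ we have $\langle\partial_r,\nu\rangle\geq\delta_0$ throughout. Also $\Sigma_{t,s}$ inherits uniform bounds $\bar r_0-\underline r_0\leq C_1$ and $\|A\|\leq C_2$ from $\Sigma_t$ (Lemma~\ref{Lem: C11estimate}, Lemma~\ref{Lem: pinch sigma t}) for $s$ small.

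Next I would invoke Section~3 on the interval $[0,\tau_i]$: by Proposition~\ref{Prop: lower bound mean curvature} (whose hypotheses are exactly the angle bound $\delta_0$ plus the $C_1,C_2$ bounds on the initial slice, provided the flow encloses $B_{R_0(\delta_0,C_2)}$, which holds for $t$ large since $\underline r$ is large) we get $H\geq C(\delta_0,C_1,C_2)\min\{1,\tau^{1/2}\}$, and by Corollary~\ref{Cor: interiorestimateh} together with Lemma~\ref{Lem: upper bound mean curvature} we get $\|A\|\leq C(\delta_0,C_2)$ on $\bar\Sigma_{t_i,s_i,\tau}$ for all $\tau\in[0,\tau_i]$. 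These yield uniform $C^{1,\alpha}$ control of each slice. Meanwhile the Hawking mass of $\bar\Sigma_{t_i,s_i,\tau}$ has a uniform lower bound (the mean curvature flow used to build $\Sigma_{t,s}$ does not decrease Hawking mass much, and the subsequent IMCF is monotone, as in Lemma~\ref{Lem: hawkingmasssigmat}), and combining this with the weak Gauss--Bonnet identity and the curvature bounds exactly as in the proof of Lemma~\ref{Lem: nearlyumbilicalsphere sigmat} and Corollary~\ref{Cor: angleestimatesigmat}, one obtains
\begin{equation*}
\int_{\bar\Sigma_{t_i,s_i,\tau_i}}\|\mathring{\bar A}\|_{\bar g}^2\,\mathrm d\mu_{\bar g}\leq C\,A(\bar\Sigma_{t_i,s_i,\tau_i})^{-\frac12}\leq C\,A(\Sigma_{t_i})^{-\frac12}\to 0,
\end{equation*}
since the area only grows along both flows. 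Thus $\bar\Sigma_{t_i,s_i,\tau_i}$ is an exhausting sequence (its inner radius $\geq\underline r_{t_i}\to\infty$) of uniformly $C^{1,\alpha}$ surfaces in the exterior region which is $L^2$-nearly umbilical with respect to $\bar g$. Applying Proposition~\ref{starshapeestimate} gives $\langle\partial_r,\nu\rangle\geq 1-\eta>\delta_0$ on $\bar\Sigma_{t_i,s_i,\tau_i}$ for $i$ large, contradicting the choice of $\tau_i$. This contradiction shows the angle bound $\langle\partial_r,\nu\rangle\geq\delta_0$ persists for all $\tau$ in the maximal interval of existence; then $H\geq c_0>0$ and $\|A\|\leq c_1$ on that interval by the Section~3 estimates, so Lemma~\ref{Lem: extension IMCF} forbids a finite maximal time, i.e. $\tau_0(t,s)=+\infty$.

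The main obstacle I anticipate is the bookkeeping at $\tau=0$ and the transfer of bounds from $\Sigma_t$ to $\Sigma_{t,s}$: one must check that the short mean curvature flow producing $\Sigma_{t,s}$ degrades the angle, the radius pinch, the second fundamental form bound, and the Hawking mass by amounts that vanish (or stay controlled) as $s\to 0$, uniformly in $t$ large — this is where the appendix construction (Lemma~\ref{Lem: MFCsigmats}) must be used carefully. A secondary subtlety is that Proposition~\ref{Prop: lower bound mean curvature} needs the angle bound with the \emph{same} constant $\delta_0$ it outputs estimates for, so the contradiction setup must fix $\delta_0$ first and then choose $\eta<1-\delta_0$ and $T_0$, rather than the other way around; the logic is circular-looking but is resolved precisely because the conclusion of Proposition~\ref{starshapeestimate} ($1-\eta$) is strictly stronger than the hypothesis $\delta_0$ fed to the Section~3 estimates, so no genuine circularity occurs.
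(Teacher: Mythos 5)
Your proposal is correct and follows essentially the same route as the paper: a contradiction argument on the first bad time, feeding the $\delta_0$ angle bound into the Section~3 curvature estimates, using the Hawking-mass/Gauss--Bonnet chain to verify the $L^2$-umbilicity hypothesis of Proposition~\ref{starshapeestimate}, and then reading off the strictly stronger angle bound $1-\eta>\delta_0$ to close the contradiction, with Lemma~\ref{Lem: extension IMCF} supplying all-time existence afterward. The only structural difference from the paper is cosmetic: the paper takes $\tau_1(t,s)$ to be the supremum of times on which the angle bound and existence hold, invokes the extension lemma \emph{inside} the contradiction to push the flow slightly past $\tau_1$, and then uses continuity of the angle; you instead pick $\tau_i$ to be the first time the angle touches $\delta_0$ (which lies strictly inside the existence interval), so the extension lemma is needed only at the very end. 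Both resolve the apparent circularity in the same way, and your warning about carefully transferring $\bar r_0-\underline r_0$, $\|A\|$, and Hawking-mass control from $\Sigma_t$ to $\Sigma_{t,s}$ via Lemmas~\ref{Lem: MFCsigmats} and~\ref{Lem: sigmatsestimates} is exactly what the paper checks (note the precise reference for the Hawking-mass bound along the approximating flow is Lemmas~\ref{Lem: sigmatsestimates} and~\ref{Lem: sigmatstauestimates}, not Lemma~\ref{Lem: hawkingmasssigmat}).
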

\begin{proof}
Define
\begin{equation}\label{Eq: tau1ts}
\tau_1(t,s):=\sup\left\{\tau'>0: \,\bar\Sigma_{t,s,\tau}\,\,\text{\rm exists and satisfies }\langle \partial_r,\nu\rangle\geq\delta_0\,\,\text{\rm for }\tau\in[0,\tau']\right\}.
\end{equation}
From Corollary \ref{Cor: angleestimatesigmat} and the $C^{1,\alpha}$ convergence of $\Sigma_{t,s}$ to $\Sigma_t$ in Lemma \ref{Lem: MFCsigmats}, there exists a $T_0>0$ such that above definition makes sense for $\bar\Sigma_{t,s,\tau}$ with $t\geq T_0$. Possibly increasing the value of $T_0$, we are going to show that $\tau_1(t,s)=+\infty$ for $t\geq T_0$. It follows from an argument by contradiction. Assume that the consequence is not true, then we can find $t_i\to\infty$ and $s_i>0$ such that $\tau_1(t_i,s_i)<\infty$. Denote
\begin{equation}
\mathcal S:=\left\{\bar\Sigma_{t_i,s_i,\tau}:\,t_i\geq T_0,\,0\leq\tau <\tau_1(t_i,s_i)\right\}.
\end{equation}
By Lemma \ref{Lem: MFCsigmats} and Corollary \ref{Cor: interiorestimateh}, there exists $i_0$ such that any surface $\bar\Sigma_{t_i,s_i,\tau}$ in $\mathcal S$ with $i\geq i_0$ satisfies $\|A_{i,\tau}\|\leq C$, where $C$ is a universal constant independent of $i$ and $\tau$. In particular, such $\bar\Sigma_{t_i,s_i,\tau}$ has locally uniform $C^{1,\alpha}$ estimate. Combined with Lemma \ref{Lem: MFCsigmats}, Lemma \ref{Lem: sigmatsestimates}, Proposition \ref{Prop: lower bound mean curvature} and Lemma \ref{Lem: extension IMCF}, we can also assume that $\bar\Sigma_{t_i,s_i,\tau}$ with $i\geq i_0$ exists on $[0,\tau_1(t_i,s_i)+\epsilon_i)$, where $\epsilon_i$ are positive constants depending on $i$.
Notice that $\bar\Sigma_{t_i,s_i,\tau}$ is topological sphere due to the fact that $\Sigma_t$ is topological sphere from Lemma \ref{Lem: nearlyumbilicalsphere sigmat}, combined with Lemma \ref{Lem: sigmatstauestimates}, we have
\begin{equation}
\begin{split}
\int_{\bar\Sigma_{t_i,s_i,\tau}} \|\mathring{\bar A}_{i,\tau}\|_{\bar g}^2\mathrm d \mu_{\bar g}&=\int_{\bar\Sigma_{t_i,s_i,\tau}} \|\mathring A_{i,\tau}\|_g^2\mathrm d \mu_g +\int_{\bar\Sigma_{t_i,s_i,\tau}} (1+\|A_{i,\tau}\|_g^2)O(e^{-3r})\mathrm d\mu_g\\
&=8\pi-4\pi\chi(\bar\Sigma_{t_i,s_i,\tau})+32\pi^{\frac{3}{2}}\Lambda A(\bar\Sigma_{t_i,s_i,\tau})^{-\frac{1}{2}}+\int_{\bar\Sigma_{t_i,s_i,\tau}} O(e^{-3r})\mathrm d\mu_g\\
&\leq CA(\Sigma_{t_i,s_i,\tau})^ {-\frac{1}{2}}\to 0,\quad\text{as}\quad i\to\infty,
\end{split}
\end{equation}
where $C$ is a universal constant. Here we remind readers that we use the fact $\|A_{i,\tau}\|\leq C$ in the second line and use the radii estimate from Lemma \ref{Eq: sigmatstauestimates1} and exponential growth of area to handle the last error term.
Applying Proposition \ref{starshapeestimate} to $\bar\Sigma_{t_i,s_i,\tau}$, fixing a $\delta_1\in(\delta_0,1)$, possibly raising the value of $i_0$, we conclude that any surface $\bar\Sigma_{t_i,s_i,\tau}\in\mathcal S$ with $i\geq i_0$ satisfies $\langle\partial_r,\nu\rangle\geq \delta_1$. Since $\bar\Sigma_{t_i,s_i,\tau}$ is a smooth solution of inverse mean curvature flow, this implies that there {exists a positive constant} $\epsilon_i'<\epsilon_i$ such that for $i\geq i_0$
\begin{equation}
\left\langle\frac{\partial}{\partial r},\nu\right\rangle\geq\delta_0\quad\text{\rm for}\quad\bar\Sigma_{t_i,s_i,\tau}\quad\text{\rm with}\quad0\leq\tau\leq\tau_1(t_i,s_i)+\epsilon_i',
\end{equation}
which contradicts to (\ref{Eq: tau1ts}) and this completes the proof.
\end{proof}

We now present the proof for Theorem \ref{Thm: maintheorem1}
\begin{proof}[Proof for Theorem \ref{Thm: maintheorem1}]
Combined Lemma \ref{Lem: sigmatsestimates} and Proposition \ref{Prop: barsigmatstau}, we can take $T_0$ large enough such that Proposition \ref{Prop: lower bound mean curvature} is valid for $\bar\Sigma_{T_0,s,\tau}$. Therefore, for any $0<\tau'<\tau''$, we have $H\geq C(\tau',\tau'')$ for $\bar\Sigma_{T_0,s,\tau}$ with $\tau\in[\tau',\tau'']$, where $C(\tau',\tau'')$ is a universal constant independent of $s$ and $\tau$. Also, from Corollary \ref{Cor: interiorestimateh}, such $\bar\Sigma_{T_0,s,\tau}$ satisfies $\|A\|\leq C$ for {an} absolute constant $C$. Using these and applying Krylov's regularity theory to $\bar\Sigma_{T_0,s,\tau}$, all resulting higher regular estimates are uniform in $s$ for $\tau\in[\tau',\tau'']$. Therefore, $\Sigma_{T_0,s,\tau}$ converges locally and smoothly to a limit inverse mean curvature flow $\bar\Sigma_{T_0,\tau}$, $\tau>0$, as $s\to 0$.

We claim that slices $\bar\Sigma_{T_0,\tau}$ must coincide with the slices $\Sigma_{T_0+\tau}$ of the weak solution of inverse mean curvature flow for all $\tau>0$. By the definition of a weak solution of inverse mean curvature flow in \cite{HI2001}, we assume that $\Sigma_t$ is a level set solution with respect to a Lipschitz function $u$. Define
\begin{equation}
\bar u(x):=\left\{
\begin{array}{cc}
\tau,&x\in\bar\Sigma_{T_0,\tau};\\
0,&\text{\rm otherwise},
\end{array}\right.
\end{equation}
then it suffices to show that $\bar u=(u-T_0)_+$, where $(u-T_0)_+$ represents the nonnegative part of the function $u-T_0$. This {follows} from a comparison between $\Sigma_t$ and $\bar\Sigma_{T_0,\tau}$. From uniformly bounded mean curvature for $\Sigma_{T_0,s}$ in Lemma \ref{Lem: MFCsigmats}, using Lemma \ref{Lem: upper bound mean curvature}, combined with a uniform choice of $\sigma(x)$ by the asymptotically hyperbolic property, $\bar\Sigma_{T_0,s,\tau}$ with $\tau>0$ satisfies $H\leq C$. Here and in the sequel, {$C$ is always denoted to be a} universal constant independent of $s$ and $\tau$, while the meaning may vary from line to line. Investigating the distance between $\bar\Sigma_{T_0,s,\tau}$ and $\Sigma_{T_0,s}$, we have $\text{\rm dist}(\bar\Sigma_{T_0,s,\tau}, \Sigma_{T_0,s})\geq C\tau$. Due to the fact that $\bar\Sigma_{T_0,s,\tau}$ encloses $\Sigma_{T_0,s}$ and $C^{1,\alpha}$ convergence from $\Sigma_{T_0,s}$ to $\Sigma_{T_0}$ in Lemma \ref{Lem: MFCsigmats}, {taking} $s\to0$, we know that $\bar\Sigma_{T_0,\tau}$ encloses $\Sigma_{T_0}$ for any $\tau>0$. From Lemma 2.3 in \cite{HI2001}, {fix} any $\tau_0>0$, $\{\Sigma_{T_0,\tau}\}_{\tau \geq\tau_0}$ is a weak solution of inverse mean curvature flow as level sets of the function $(\bar u-\tau_0)_+$. Compared {with} the weak solution $\{\Sigma_{t}\}_{t\geq T_0}$, applying Lemma 2.2 in \cite{HI2001}, we obtain $(u-T_0)_+\geq (\bar u-\tau_0)_+$. {Taking} $\tau_0\to 0^+$, we see $(u-T_0)_+\geq\bar u$. For {the other} direction, notice that $\bar\Sigma_{T_0,s,\tau}$ with $0<\tau\leq 1$ satisfies $H\geq C\tau^{\frac{1}{2}}$ from Proposition \ref{Prop: lower bound mean curvature}, by investigating the farthest distance between $\bar\Sigma_{T_0,s,\tau}$ and $\Sigma_{T_0,s}$, we conclude that
$$\sup_{p\in\bar\Sigma_{ T_0,s,\tau}} \text{\rm dist}(p,   \Sigma_{T_0,s})\leq C\tau^{\frac{1}{2}}.
$$
{Taking} $s\to 0$, for any $t_0>0$, there exists $\tau_0$ such that $\bar\Sigma_{T_0,\tau}$ with $\tau\leq\tau_0$ is enclosed by $\Sigma_{T_0+t_0}$. Using Lemma 2.2 in \cite{HI2001} again, we have $(u-T_0-t_0)_+\leq (\bar u-\tau)_+$ for any $0<\tau<\tau_0$. {Taking} $\tau\to0$, and then $t_0\to0$, we get $(u-T_0)_+\leq \bar u$. Therefore, $\bar u=(u-T_0)_+$.
\end{proof}

\section{The limit of Hawking masses of slices along the IMCF}\label{section: limithawkingmass}

Let $(M,g)$ be an asymptotically ADS-Schwarzschild manifold with positive mass $\frac{m}2$ and $\Sigma_t$ be a weak solution of inverse mean curvature flow with connected $C^2$-smooth initial data. If we choose $T_0$ large enough, surfaces $\Sigma_t$ with $t\geq T_0$ will be contained in the exterior region $M\backslash U$. Due to the fact that $\Sigma_t$ is star-shaped for $t\geq T_0$, we can view these surfaces as radical graphs over $\mathbb S^2$ in the polar {coordinates}. That is, we write
\begin{equation}
\Sigma_t=\left\{\left(\hat r_t+f_t(\theta),\theta\right):\theta\in\mathbf S^2\right\},
\end{equation}
where $\hat r_t$ is the area radius such that $A(\Sigma_t)=4\pi\sinh^2\hat r_t$.

In the following, we consider the asymptotically ADS-Schwarzschild metric in the form
\begin{equation}
g=\mathrm dr^2+\left(\sinh^2r+\frac{m}{3\sinh r}\right)g_{\mathbb S^2}+Q
\end{equation}
with
\begin{equation}{\label{Eq: errordecayuptolorder}}
\sum_{i=0}^l|\bar\nabla^i Q|_{\bar g}=O(e^{-5r}),\quad l\geq 3.
\end{equation}

We want to show the following result:
\begin{theorem}\label{Thm: main2}
As $t\to+\infty$, functions $f_t$ converge to a $C^{k-1,\alpha}$ function $f$ on $\mathbb S^2$ in $C^0(\mathbb S^2)$ sense, where $k=\min\{5,l+1\}$. Furthermore, the Hawking mass of $\Sigma_t$ satisfies
\begin{equation}
\lim_{t\to+\infty}m_H(\Sigma_t)=\frac{m}{2}\left(\fint_{\mathbb S^2}e^{2f}\mathrm d\mu_{\mathbb S^2}\right)^{\frac{1}{2}}\fint_{\mathbb S^2}e^{-f}\mathrm d\mu_{\mathbb S^2}.
\end{equation}
\end{theorem}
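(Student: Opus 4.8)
The plan is to analyze the evolution of the radial graph functions $f_t$ along the flow and show they stabilize, then compute the limiting Hawking mass directly from the limiting profile. First I would recall that for $t \geq T_0$ the slices $\Sigma_t$ are star-shaped (Corollary \ref{Cor: angleestimatesigmat}) and smooth (Theorem \ref{Thm: maintheorem1}), so we may legitimately write $\Sigma_t$ as a graph $(\hat r_t + f_t(\theta),\theta)$ over $\mathbf S^2$ with $\hat r_t$ the area radius. Since the IMCF for star-shaped surfaces is a parabolic scalar equation for the graph function, and since the ambient metric is asymptotically ADS-Schwarzschild with the sharp decay \eqref{Eq: errordecayuptolorder}, the natural strategy is to derive a decay estimate for the oscillation $\bar r_t - \underline r_t$ (equivalently $\mathrm{osc}_{\mathbf S^2} f_t$) and for the gradient $|\nabla^{\mathbf S^2} f_t|$, showing both are bounded uniformly in $t$ and that the higher derivatives are controlled. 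From the $W^{2,2}$/umbilicity estimate $\int_{\Sigma_t}\|\mathring A\|^2 \leq C A(\Sigma_t)^{-1/2} \to 0$ already established, combined with the now-available smooth higher-order bounds via Krylov's theory, one gets that $\Sigma_t$ converges (after reparametrization by the area radius) to a limiting surface which, in the purely hyperbolic model, would be a round horosphere-type graph; in the ADS-Schwarzschild model the $m/(3\sinh r)$ correction forces a specific limiting profile $f$ solving an elliptic equation on $\mathbf S^2$, and the regularity $C^{k-1,\alpha}$ with $k = \min\{5,l+1\}$ comes from bootstrapping that elliptic equation using the decay order $l$ of $Q$.

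The key steps, in order: (1) Write down the IMCF as the scalar parabolic PDE for $f_t$ over $\mathbf S^2$, using the area radius $\hat r_t$ as the ``time'' variable (so that $\hat r_t \to \infty$ linearly in $t$) and expanding all geometric quantities — induced metric, mean curvature, unit normal — in powers of $e^{-\hat r_t}$ with the Schwarzschild correction and the $O(e^{-5r})$ error. (2) Establish a priori $C^0$ and $C^1$ bounds on $f_t$: the $C^0$ bound (equivalently $\bar r_t - \underline r_t \leq C$) follows from the star-shapedness plus the Hawking mass lower bound and the area growth, and the $C^1$ bound follows from Corollary \ref{Cor: angleestimatesigmat} which gives $\langle \partial_r,\nu\rangle \geq 1-\eta$, i.e. the gradient term is small. (3) Upgrade to uniform $C^{k-1,\alpha}$ bounds using interior Schauder/Krylov estimates for the parabolic equation together with the smoothness from Theorem \ref{Thm: maintheorem1}, exploiting that the coefficients of the PDE converge exponentially fast to those of a fixed limiting elliptic operator on $\mathbf S^2$. (4) Show $f_t$ is Cauchy in $C^0$ as $t\to\infty$: differentiate the PDE in $t$ and show $\partial_t f_t$ decays (this is where the interior-in-time smoothing and the exponential convergence of coefficients is used), so $f_t \to f$ in $C^0$ and, by interpolation with the uniform $C^{k-1+\epsilon}$ bounds, in every $C^{k-1,\alpha'}$. (5) Plug the limiting profile into the Hawking mass: expand $A(\Sigma_t) = 4\pi\sinh^2\hat r_t$ and $\int_{\Sigma_t} H^2\,\mathrm d\mu$ for the graph $(\hat r_t + f,\theta)$ in the ADS-Schwarzschild metric, keeping terms up to the order that survives in the $(16\pi)^{-3/2}$ normalization; the leading contributions from $\sinh^2 r$ and the gradient cancel against $16\pi$, the $m/(3\sinh r)$ term produces the mass, and a careful bookkeeping of the $e^{\pm 2f}$ and $e^{-f}$ weights coming from the area element and the normal yields the stated formula $\frac{m}{2}\left(\fint_{\mathbf S^2}e^{2f}\right)^{1/2}\fint_{\mathbf S^2}e^{-f}$.

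For the final inequality in Theorem \ref{maintheorem2}, I would then invoke the Cauchy--Schwarz / Jensen inequality: $\left(\fint e^{2f}\right)^{1/2} \fint e^{-f} \geq \fint e^{f} \cdot \fint e^{-f} \geq 1$ by Jensen applied twice (or by Hölder), with equality iff $f$ is constant, which translates exactly to $\bar r_t - \underline r_t \to 0$. This gives $\lim m_H(\Sigma_t) \geq m/2$ with the stated rigidity.

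The main obstacle I anticipate is step (4) — proving $f_t$ genuinely converges rather than merely being precompact with possibly oscillating subsequential limits. The issue is that IMCF does not have an obvious monotone quantity pinning down the limiting graph (the Hawking mass is monotone but only controls $\int \|\mathring A\|^2$, not the full profile), so one must squeeze convergence out of the decay of $\partial_t f_t$, which in turn requires the exponential-in-$\hat r_t$ decay of the non-autonomous part of the PDE together with a spectral-gap-type argument for the linearized operator on $\mathbf S^2$ (its kernel is spanned by the first spherical harmonics, corresponding to the hyperbolic isometries, so one must factor those out — but the area-radius normalization and the Schwarzschild term break that degeneracy, selecting a unique $f$). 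Handling this degeneracy carefully, following the scheme of Neves \cite{Ne2010}, is the technical heart of the argument.
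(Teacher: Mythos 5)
Your proposal correctly identifies the global strategy — star-shapedness and smoothness from the earlier sections, higher regularity via Krylov, reduction to a graph equation over $\mathbf S^2$, exponential decay from the Schwarzschild/$Q$ structure, and a final plugging-in of the limiting profile into the Hawking mass — and you rightly flag step (4), genuine $C^0$-convergence of $f_t$ rather than mere precompactness, as the crux. But the mechanism you sketch there (decay of $\partial_t f_t$ via a spectral-gap argument on the linearization, with the first spherical harmonics factored out and the Schwarzschild term breaking the translation degeneracy) is hand-waved and is not what would actually make the argument close; executed naively it runs exactly into the difficulty you acknowledge, with no concrete way past it. The paper takes a different and more economical route: it does not analyze $\partial_t f_t$ at all. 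Instead, the key step is Lemma \ref{Lem: HandoA}, a pointwise estimate showing that for any $\epsilon>0$ there is a slice $\Sigma_{t_1}$ with $|H-2|\leq\epsilon$ and $\|\mathring A\|\leq\epsilon$. This is proved by a blow-up/compactness argument (translate $\Sigma_{t_i}$ by hyperbolic isometries to a fixed basepoint, use the locally uniform $C^{2,\alpha}$ bound from Krylov to extract a $C^{2,\beta}_{\mathrm{loc}}$ limit, identify the limit with a horosphere where $H\equiv 2$ and $\mathring A\equiv 0$). These two pointwise bounds on a single slice are precisely Neves's Hypothesis $(H)$ from Section~3 of \cite{Ne2010}, after which one invokes his Lemmas 3.3 and 3.4 and the computations on pp.\ 214--218 to get the $C^0$ convergence of $f_t$, the higher-order estimates \eqref{Eq: fthighorderestimate} giving the $C^{k-1,\alpha}$ regularity of $f$ with the advertised $k=\min\{5,l+1\}$, and the exact Hawking-mass limit from his Proposition~2.1. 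The only genuinely new work beyond verifying Neves's hypotheses is tracking how the non-rotationally-symmetric error $Q$ (decaying like $e^{-5r}$ up to order $l$) caps the order of the bootstrap at $k=\min\{5,l+1\}$ — an issue your proposal gestures at but does not pin down. So the gap in your write-up is concrete: you lack the entry point into Neves's machinery, namely the pointwise bound on a slice obtained by blow-up, and without it the $C^0$-convergence you need in step (4) is not established.
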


From this, it is clear that we have the following corollary  which is Theorem \ref{maintheorem2}:
\begin{corollary}
The Hawking mass of $\Sigma_t$ satisfies
$$
\lim_{t\to+\infty} m_H(\Sigma_t)\geq\frac{m}{2},
$$
with equality if and only if
$$\lim_{t\to+\infty}(\bar r_t-\underline r_t)=0.$$
\end{corollary}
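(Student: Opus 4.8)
The corollary follows essentially by inspection from the formula in Theorem~\ref{Thm: main2}, so the plan is to package two elementary observations. First I would establish the inequality $\lim_{t\to\infty}m_H(\Sigma_t)\geq\frac{m}{2}$. Writing $u=e^{f}$, the right-hand side of the limit formula is $\frac{m}{2}\left(\fint_{\mathbf S^2}u^2\right)^{1/2}\fint_{\mathbf S^2}u^{-1}$, where $\fint$ denotes the average over $\mathbf S^2$ with respect to the unit round metric. The claim is that $\left(\fint u^2\right)^{1/2}\fint u^{-1}\geq 1$ for any positive function $u$. This is a one-line consequence of H\"older's inequality: by Cauchy--Schwarz, $1=\fint 1=\fint\left(u^{2/3}\cdot u^{-2/3}\right)$ and a suitable grouping, or more directly, $\fint u^{-1}\geq\left(\fint u\right)^{-1}$ by Jensen (convexity of $x\mapsto x^{-1}$) while $\left(\fint u^2\right)^{1/2}\geq\fint u$ by Cauchy--Schwarz, so multiplying the two gives $\left(\fint u^2\right)^{1/2}\fint u^{-1}\geq\left(\fint u\right)\cdot\left(\fint u\right)^{-1}=1$.

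Next I would pin down the equality case. From the chain of inequalities above, equality in $\lim m_H(\Sigma_t)=\frac{m}{2}$ forces equality in both Jensen and Cauchy--Schwarz, each of which holds if and only if $u$ is constant on $\mathbf S^2$, i.e. $f$ is constant. Since $f_t\to f$ in $C^0(\mathbf S^2)$ and $f$ is $C^{k-1,\alpha}$ by Theorem~\ref{Thm: main2}, a constant limit function $f\equiv c$ is equivalent to $\mathrm{osc}_{\mathbf S^2}f_t\to 0$ as $t\to\infty$. It remains only to translate this oscillation statement into the statement $\bar r_t-\underline r_t\to0$. By definition $\Sigma_t$ is the radial graph $r=\hat r_t+f_t(\theta)$, so $\bar r_t=\hat r_t+\max_{\mathbf S^2}f_t$ and $\underline r_t=\hat r_t+\min_{\mathbf S^2}f_t$, whence $\bar r_t-\underline r_t=\max_{\mathbf S^2}f_t-\min_{\mathbf S^2}f_t=\mathrm{osc}_{\mathbf S^2}f_t$. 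Therefore $\bar r_t-\underline r_t\to 0$ if and only if $f$ is constant, which is precisely the equality condition identified above.

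Assembling these: the limit formula of Theorem~\ref{Thm: main2} together with the H\"older/Jensen inequality gives $\lim_{t\to\infty}m_H(\Sigma_t)\geq\frac{m}{2}$ (using $m>0$); equality holds iff the limiting profile $f$ is constant on $\mathbf S^2$; and by the graph representation this is equivalent to $\lim_{t\to\infty}(\bar r_t-\underline r_t)=0$. There is no serious obstacle here at all---every step is elementary once Theorem~\ref{Thm: main2} is granted. The only point requiring a word of care is that the convergence $f_t\to f$ is merely $C^0$, so one should phrase the equality discussion directly in terms of $\mathrm{osc}\,f_t=\bar r_t-\underline r_t$ rather than in terms of derivative bounds on $f_t$; the $C^0$ convergence is exactly enough for $\mathrm{osc}\,f_t\to\mathrm{osc}\,f$, and for $f$ a continuous function $\mathrm{osc}\,f=0$ is the same as $f$ being constant.
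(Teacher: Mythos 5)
Your proof is correct and follows the same strategy as the paper: both derive the bound directly from the limit formula of Theorem~\ref{Thm: main2} via an elementary integral inequality on $\mathbf S^2$ and identify equality with $f$ being constant. The only cosmetic difference is that the paper uses a single H\"older inequality with exponents $3$ and $\tfrac{3}{2}$ applied to $1=e^{2f/3}\cdot e^{-2f/3}$, whereas you factor it through Jensen plus Cauchy--Schwarz; you also explicitly spell out the translation $\bar r_t-\underline r_t=\mathrm{osc}_{\mathbf S^2}f_t\to\mathrm{osc}_{\mathbf S^2}f$ via the graph representation $r=\hat r_t+f_t(\theta)$, a step the paper leaves implicit.
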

\begin{proof}
From H\"older's inequality, we have
\begin{equation*}
1=\fint_{\mathbb S^2}1\mathrm d\mu_{\mathbb S^2}\leq \left(\fint_{\mathbb S^2}e^{2f}\mathrm d\mu_{\mathbb S^2}\right)^{\frac{1}{3}}\left(\fint_{\mathbb S^2}e^{-f}\mathrm d\mu_{\mathbb S^2}\right)^{\frac{2}{3}}.
\end{equation*}
The equality holds if and only if $f$ is a constant function.
\end{proof}

Since our following argument strongly relies on  \cite{Ne2010}, let us sketch the corresponding part in \cite{Ne2010} first. Although only exact ADS-Schwarzschild manifolds are under consideration in \cite{Ne2010}, many results are still valid in asymptotically ADS-Schwarzschild case. First, Proposition 2.1 in \cite{Ne2010} is true for asymptotically ADS-Schwarzschild manifolds from the exactly same calculations. Also, arguments in Lemma 3.3 and Lemma 3.4 of \cite{Ne2010} work well for asymptotically ADS-Schwarzschild manifolds. The induction process in Lemma 3.6 of \cite{Ne2010} can be applied as well. However, slight difference appears in the tensor $B=\Ric(\cdot,\nu)$ defined in Lemma 3.6 between these two cases. In ADS-Schwarzschild manifolds, rotational symmetry guarantees that the components of tensor $B$ in a local coordinate can be written as functions $F_j(r,\nabla r)$, which depend only on $r$ and $\nabla r$. While in asymptotically ADS-Schwarzschild case, the component function $F_j=F_j(r,\theta,\nabla r)$ also depends on the sphere parameter $\theta$. In the process for higher order estimate, the derivative along $\theta$-direction will impose a restriction on the highest decay order of $\|\nabla^i A\|$. In our case, the restriction for the decay order of $\|\nabla^i A\|$ is $O(e^{-5r})$, due to the non-rotational symmetry part $Q$. Therefore, by the same discussion on Page 214-217 in \cite{Ne2010}, we {obtain} the same result as in Lemma 3.5 \cite{Ne2010}, except for a restriction on the value of $n$ due to the highest decay order $O(e^{-5r})$ of the differential of $Q$ up to order $l$. In fact, we can obtain
\begin{equation}\label{Eq: fthighorderestimate}
A(\Sigma_0)^n|\nabla^n f_t|^2\leq Ce^{-nt},\quad n=1,2,\ldots,k,
\end{equation}
and
\begin{equation}
A(\Sigma_0)^{n+2}|\nabla^nA|^2\leq Ce^{-(n+2)t},\quad n=1,2,\ldots,k-2,
\end{equation}
where $k=\min\{5,l+1\}$.

In order to apply results in \cite{Ne2010}, we verify the Hypothesis $(H)$ given in section 3 of \cite{Ne2010} for some slice $\Sigma_{t_1}$. For this purpose, we first derive a local uniform $C^{2,\alpha}$ estimate for $\Sigma_t$ from Krylov's regularity theory. Through a similar argument in the spirit of Proposition \ref{starshapeestimate}, we then verify the Hypothesis $(H)$ for some slice $\Sigma_{t_1}$.

\begin{lemma}
Let $T_0$ as in Theorem \ref{Thm: maintheorem1}. There is a $T_1>T_0$ so that $\Sigma_t$ has local uniform $C^{2,\alpha}$-estimate for all $t\geq T_1$.
\end{lemma}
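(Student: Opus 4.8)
The plan is to exploit that, by Theorem \ref{Thm: maintheorem1}, the flow $\{\Sigma_t\}_{t\geq T_0}$ is already \emph{smooth}, so that $\Sigma_t$ for large $t$ is a genuine smooth star-shaped solution of (\ref{def: IMCF}) to which one can run Krylov's interior parabolic estimates on the radial-graph equation, once the relevant pointwise bounds are in place.

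First I would assemble the a priori bounds. By Corollary \ref{Cor: angleestimatesigmat}, after enlarging $T_0$ we may assume $\langle\nu,\partial_r\rangle\geq\delta_0$ on $\Sigma_t$ for some fixed $\delta_0\in(0,1)$ and all $t\geq T_0$, and that each $\Sigma_t$ lies in the exterior region and encloses any prescribed ball. From Lemma \ref{Lem: C11estimate} we have $\|A\|\leq C_2$ on $\Sigma_t$ uniformly in $t$, and from the pinching Lemma \ref{Lem: pinch sigma t} we have $\bar r_{T_0}-\underline r_{T_0}\leq C_1$. Applying Proposition \ref{Prop: lower bound mean curvature} to the smooth flow $\{\Sigma_{T_0+t}\}_{t\geq0}$ then gives $H\geq C(\delta_0,C_1,C_2)\min\{1,t^{1/2}\}$, hence a uniform lower bound $H\geq c_0>0$ on $\Sigma_t$ for all $t\geq T_0+1$.

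Next I would pass to the graph picture. For $t\geq T_0+1$ the slice $\Sigma_t$ is star-shaped and lies in the exterior region, so we write $\Sigma_t=\{(u(\theta,t),\theta):\theta\in\mathbf S^2\}$; the bound $\langle\nu,\partial_r\rangle\geq\delta_0$ controls $|\nabla u|$, and together with $\|A\|\leq C_2$ it yields a uniform $C^{1,1}$ bound for $u(\cdot,t)$ on $\mathbf S^2$, the companion time regularity coming from the flow equation itself. In these coordinates (\ref{def: IMCF}) becomes $\partial_t u=v/H$, where $v=\langle\nu,\partial_r\rangle^{-1}$ depends only on $\theta,u,\nabla u$, and $H=a^{ij}(\theta,u,\nabla u)\nabla_{ij}u+b(\theta,u,\nabla u)$ is the quasilinear mean-curvature operator. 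Since $1\leq v\leq\delta_0^{-1}$, $H\geq c_0>0$ and $\|A\|\leq C_2$, the right-hand side, viewed as a function of $\nabla^2u$, is uniformly elliptic with ellipticity constants and $C^1$-norm controlled by these bounds, and it is convex in $\nabla^2u$ because $H$ is affine in $\nabla^2u$ and $x\mapsto 1/x$ is convex on $(0,\infty)$. Thus $\partial_t u=v/H$ is a uniformly parabolic equation of exactly the type covered by Krylov's regularity theory.

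Finally I would invoke Krylov's interior estimate (Page 253 of \cite{K1987}), as already used in the proof of Lemma \ref{Lem: extension IMCF}: since the smooth flow exists on all of $[T_0,\infty)$, every time-slice $t\geq T_1:=T_0+2$ has a backward parabolic neighborhood inside $\mathbf S^2\times[T_0,\infty)$, so Krylov's theorem produces a bound for $u(\cdot,t)$ in $C^{2,\alpha}(\mathbf S^2)$ with constant depending only on $\delta_0,c_0,C_1,C_2$ and the ambient geometry; translating back gives the asserted local uniform $C^{2,\alpha}$ estimate for $\Sigma_t$, $t\geq T_1$. I expect the only point that is not pure bookkeeping to be the verification that the graph IMCF equation is uniformly parabolic and convex in the Hessian once $H$ is bounded below and $\|A\|$ above — which is what makes Krylov's theorem applicable — together with the derivation of the lower bound $H\geq c_0$ from Proposition \ref{Prop: lower bound mean curvature}; the remaining estimates involving the controlled asymptotically ADS-Schwarzschild metric are routine.
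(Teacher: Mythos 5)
Your proof is correct and takes essentially the same route as the paper: both obtain a uniform lower bound on $H$ from Proposition \ref{Prop: lower bound mean curvature} combined with Theorem \ref{Thm: maintheorem1} (so that $\{\Sigma_t\}_{t\ge T_0}$ is a genuine smooth star-shaped IMCF), couple it with the uniform bound $\|A\|\le C$ from Lemma \ref{Lem: C11estimate}, and then invoke Krylov's parabolic regularity theorem from Page~253 of \cite{K1987}. The only difference is that you spell out the verification of Proposition \ref{Prop: lower bound mean curvature}'s hypotheses and the uniform parabolicity and Hessian-convexity of the graphical IMCF equation, steps the paper leaves implicit.
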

\begin{proof}
From Proposition \ref{Prop: lower bound mean curvature} and Theorem \ref{Thm: maintheorem1}, $\Sigma_{T_0+t}$ satisfies
\begin{equation*}
H\geq C\min\{t^{\frac{1}{2}},1\},
\end{equation*}
where $C$ is a universal constant. Therefore, we can take $T_1>T_0$ such that $\Sigma_t$ with $t\geq T_1$ satisfies uniformly lower bounded mean curvature. Combined with uniformly bounded second fundamental form for $\Sigma_t$, using the theorem in Page 253 of  \cite{K1987}, we conclude that $\Sigma_t$ with $t\geq T_1$ satisfies locally uniform $C^{2,\alpha}$ estimates.
\end{proof}

Then we can obtain estimates for mean curvature and the trace-free part of the second fundamental form.

\begin{lemma}\label{Lem: HandoA}
For any $\epsilon>0$, there is a $t_1\geq T_1$ such that $\Sigma_{t_1}$ satisfies
\begin{equation}
|H-2|\leq \epsilon\quad\text{\rm and}\quad \|\mathring A\|\leq \epsilon.
\end{equation}
\end{lemma}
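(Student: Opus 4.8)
The plan is to argue by contradiction, extracting a subsequence of slices along which the desired closeness fails, and then running the area-normalized blow-down/convergence machinery of Proposition \ref{starshapeestimate} to force the limit to be a round horosphere, on which $H\equiv 2$ and $\mathring A\equiv 0$ in the hyperbolic background. Concretely, suppose the conclusion fails for some $\epsilon>0$; then there is a sequence $t_i\to+\infty$ with $\max_{\Sigma_{t_i}}|H-2|>\epsilon$ or $\max_{\Sigma_{t_i}}\|\mathring A\|>\epsilon$. By Lemma \ref{Lem: pinch sigma t} these slices lie in the exterior region and diverge to infinity, by Lemma \ref{Lem: C11estimate} their second fundamental forms are uniformly bounded, and by Lemma \ref{Lem: nearlyumbilicalsphere sigmat} together with the estimate $\int_{\Sigma_{t_i}}\|\mathring A\|_g^2\,\mathrm d\mu\le CA(\Sigma_{t_i})^{-1/2}$ and the area growth $A(\Sigma_{t_i})=A(\Sigma_0)e^{t_i}\to\infty$, one gets $\int_{\Sigma_{t_i}}\|\mathring{\bar A}\|_{\bar g}^2\,\mathrm d\mu_{\bar g}\to 0$ exactly as in \eqref{Eq: starshapesigmat3}. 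So the hypotheses of Proposition \ref{starshapeestimate} are met.

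Next I would upgrade the $C^0$/$C^1$ convergence used inside Proposition \ref{starshapeestimate} to $C^2$ convergence of the transformed surfaces $S_i=T^i(\Sigma_{t_i})$. This is where the preceding lemma (local uniform $C^{2,\alpha}$ estimates for $\Sigma_t$, $t\ge T_1$) enters: since isometries of $\mathbf H^3$ preserve these estimates, the $S_i$ have local uniform $C^{2,\alpha}$ bounds near the fixed point $p$, hence a subsequence converges in $C^2_{\mathrm{loc}}$ to the limiting horosphere $S$ through $p$ (identified in the proof of Proposition \ref{starshapeestimate}). On a horosphere of $\mathbf H^3$ the principal curvatures are both equal to $1$, so $\bar H\to 2$ and $\|\mathring{\bar A}\|_{\bar g}\to 0$ at $p$ in the hyperbolic metric; by the $C^2$ convergence this holds uniformly on compact pieces, and by isometry invariance $\max_{\Sigma_{t_i}}|\bar H-2|\to 0$ and $\max_{\Sigma_{t_i}}\|\mathring{\bar A}\|_{\bar g}\to 0$.

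Finally I would pass from the hyperbolic background quantities back to the ambient metric $g$ using the asymptotic expansion \eqref{Eq: AHmetric}--\eqref{Eq: errorQdecay}: the difference between $H$ and $\bar H$, and between $\mathring A$ and $\mathring{\bar A}$, is controlled by $(1+\|A\|)\,O(e^{-3r})$, which tends to $0$ uniformly on $\Sigma_{t_i}$ because $\|A\|$ is uniformly bounded and $r\to\infty$ on these slices by Lemma \ref{Lem: pinch sigma t}. Hence $\max_{\Sigma_{t_i}}|H-2|\to 0$ and $\max_{\Sigma_{t_i}}\|\mathring A\|\to 0$, contradicting the choice of the $t_i$. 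I expect the main obstacle to be the $C^2$-upgrade of the convergence in the second step: one must check that the isometries $T^i$ do not degrade the interior elliptic estimates (they do not, being isometries) and that the $C^{2,\alpha}$ bound survives the passage to the ball model and the rescaling to Euclidean coordinates used in \cite{LM2005}; once the limit is pinned down as a smooth horosphere, the curvature identities are immediate, and the error-term bookkeeping in the third step is routine given the uniform $\|A\|$ bound.
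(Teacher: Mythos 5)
Your proposal is correct and follows essentially the same route as the paper: a contradiction argument, transporting the bad points $p_i$ to a fixed $p$ by hyperbolic isometries $T^i$, invoking Proposition \ref{starshapeestimate} to identify the $C^0$ limit as a horosphere, and then upgrading to $C^2$ convergence via the local uniform $C^{2,\alpha}$ estimates so the curvature identities of the horosphere ($\bar H\equiv 2$, $\mathring{\bar A}\equiv 0$) pass to the limit. The only cosmetic difference is that the paper handles the $g$ versus $\bar g$ discrepancy by pulling back $g$ to $g_i=((T^i)^{-1})^*g$ and noting $g_i\to\bar g$ in $C^{2,\alpha}_{\mathrm{loc}}$, while you estimate the error term $(1+\|A\|)O(e^{-3r})$ directly; both are fine.
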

\begin{proof}
We only prove the first estimate, since the second one follows from a similar argument. Assume that the estimate does not hold, then there exists $t_i\to+\infty$ and $p_i\in\Sigma_{t_i}$ such that $|H(p_i)-2|\geq \epsilon$. We regard $(M,\bar g)$ as $\mathbb H^3$ and view $\Sigma_i$ as surfaces in $\mathbb H^3$. Let $p$ be a fixed point in $\mathbb H^3$ and $T^i:\mathbb H^3\to\mathbf H^3$ be isometric transformations of $\mathbb H^3$ such that $T^i(p_i)=p$. Denote $S_i=T^i(\Sigma_{t_i})$, after adapting the ball model for $\mathbb H^3$, we can view $S_i$ as surfaces in the Euclidean ball $\mathbb B^3$. As in Proposition \ref{starshapeestimate}, we conclude that $S_i$ converge in $C^0$ sense to a round sphere which passes $p$ and tangent to $\partial\mathbf B^3$ at some point $q$. Denote $g_i=\left((T^i)^{-1}\right)^*g$, then $g_i$ converges in $C^{2,\alpha}_{loc}$ sense to $\bar g$. Since $\Sigma_{t_i}$ has local uniform $C^{2,\alpha}$ estimate in $(M,g)$, possibly passing to a subsequence, $S_i=T^i(\Sigma_{t_i})$ converges to a horosphere in $\mathbb H^3$ in $C^{2,\beta}_{loc}$ sense, where $\beta<\alpha$. In particular, we see that $H(p_i)$ converges to 2, which leads to a contradiction.
\end{proof}

Now, we present the proof for Theorem \ref{Thm: main2}.
\begin{proof}[Proof for Theorem \ref{Thm: main2}]
From Lemma \ref{Lem: HandoA}, we can choose a surface $\Sigma_{t_1}$ such that $\Sigma_{t_1}$ satisfies the hypothesis $(H)$ in section 3 of \cite{Ne2010}. From Lemma 3.3, Lemma 3.4 and the calculation on Page 218 in \cite{Ne2010}, we know that $f_t$ converges to a function $f$ on $\mathbb S^2$ in $C^0(\mathbb S^2)$ sense. By (\ref{Eq: fthighorderestimate}), we see that $f$ is a $C^{k-1,\alpha}$ function on $\mathbb S^2$. Since $f_t$ has uniformly bounded $C^2$-norm on $\mathbb S^2$ and $f_t$ converges to $f$ in $C^{0}(\mathbb S^2)$ sense, similar calculation as in Proposition 2.1 \cite{Ne2010} shows that
\begin{equation*}
\lim_{t\to+\infty}m_H(\Sigma_t)=\frac{m}{2}\left(\fint_{\mathbb S^2}e^{2f}\mathrm d\mu_{\mathbb S^2}\right)^{\frac{1}{2}}\fint_{\mathbb S^2}e^{-f}\mathrm d\mu_{\mathbb S^2}.
\end{equation*}
\end{proof}

\begin{appendix}
\section{Evolution equations under IMCF}
In this section, $(M,g)$ is an asymptotically hyperbolic manifold and $\bar\Sigma_t$ is a smooth solution of inverse mean curvature flow in $(M,g)$. We calculate evolution equations of various quantities under inverse mean curvature flow $\bar\Sigma_t$ as following:
\begin{lemma}\label{Lem: evolutionequation}
Let $F=\sinh r\frac{\partial}{\partial r}$ and $w=\langle F,\nu\rangle$, then
\begin{equation}\label{Eq: evolutionequationw}
\left(\frac{\partial}{\partial t}-\frac{1}{H^2}\Delta_{\bar\Sigma_t}\right)w=\frac{\|A\|^2}{H^2}w+\frac{1}{H^2} \left(O(e^{-2r})+O(\|A\|e^{-2r})\right).
\end{equation}
The evolution equation for mean curvature is
\begin{equation}\label{Eq: evolutionequationH}
\frac{\partial H}{\partial t}=\frac{1}{H^2}\Delta_{\bar\Sigma_t} H-\frac{2}{H^3}|\nabla_{\bar\Sigma_t} H|^2-\frac{\Ric(\nu,\nu)+\|A\|^2}{H}.
\end{equation}
In particular,
\begin{equation}\label{Eq: evolutionequationH-1}
\left(\frac{\partial}{\partial t}-\frac{1}{H^2}\Delta_{\bar\Sigma_t}\right)H^{-1}=\frac{\|A^2\|}{H^2}H^{-1}+ \frac{\Ric(\nu,\nu)}{H^2}H^{-1}.
\end{equation}
\end{lemma}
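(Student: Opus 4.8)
The plan is to combine the universal first‑variation formulas for inverse mean curvature flow with the single geometric fact that $F=\sinh r\,\partial_r$ is a conformal Killing field for the hyperbolic background. First I would record the standard variations under $\partial_t F=H^{-1}\nu$: one has $\partial_t g_{ij}=\tfrac2H h_{ij}$, the covariant variation $\tfrac{D}{\partial t}\nu=H^{-2}\nabla^{\bar\Sigma_t}H$, and, for any normal flow with speed $f$, $\partial_t H=-\Delta_{\bar\Sigma_t}f-(\|A\|^2+\Ric(\nu,\nu))f$. Specializing to $f=H^{-1}$ and expanding $\Delta(H^{-1})=-H^{-2}\Delta H+2H^{-3}|\nabla H|^2$ gives \eqref{Eq: evolutionequationH} at once; then $\partial_t(H^{-1})=-H^{-2}\partial_t H$ together with the same expansion of $\Delta(H^{-1})$ produces \eqref{Eq: evolutionequationH-1}, since the $\Delta H$ and $|\nabla H|^2$ contributions cancel. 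So the only genuine work is \eqref{Eq: evolutionequationw}.

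For $w=\langle F,\nu\rangle$ the key input is that for the hyperbolic metric $\bar g$ one has $\bar\nabla_X F=\cosh r\,X$ for every vector $X$. Since $g=\bar g+Q$ with $|Q|_{\bar g}+|\bar\nabla Q|_{\bar g}+|\bar\nabla^2Q|_{\bar g}=O(e^{-3r})$, the difference of the Levi-Civita connections of $g$ and $\bar g$ is $O(e^{-3r})$, and because $|F|_{\bar g}=\sinh r$ this yields $\nabla_XF=\cosh r\,X+E(X)$ with $|E(X)|_g\le Ce^{-2r}|X|_g$, and likewise the relevant derivatives of $E$ are $O(e^{-2r})$. Using $\langle\cosh r\,e_i,\nu\rangle=0$ one gets $\nabla^{\bar\Sigma_t}_iw=h_{ij}(F^\top)^j+\langle E(e_i),\nu\rangle$; differentiating once more with the tangential identity $\nabla^{\bar\Sigma_t}_{e_i}F^\top=\cosh r\,e_i-w\,h(e_i,\cdot)+(\mathrm{error})$, tracing, and turning $\nabla^ih_{ij}$ into $\nabla_jH+\Ric(\nu,e_j)$ by Codazzi, one arrives at
\begin{equation*}
\Delta_{\bar\Sigma_t}w=\langle\nabla H,F^\top\rangle+\Ric(\nu,F^\top)+\cosh r\,H-\|A\|^2w+O(e^{-2r})+O(\|A\|e^{-2r}).
\end{equation*}
Here $\Ric(\nu,F^\top)=O(e^{-3r})|F^\top|=O(e^{-2r})$, because $(M,g)$ is asymptotically hyperbolic of dimension $3$ so $\Ric=-2g+O(e^{-3r})$, while $\langle\nu,F^\top\rangle=0$.

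Finally, the variation formulas give $\partial_t w=H^{-1}\cosh r+H^{-1}\langle E(\nu),\nu\rangle+H^{-2}\langle F^\top,\nabla H\rangle$. Subtracting $H^{-2}$ times the displayed Laplacian identity, the pairs $H^{-1}\cosh r$ and $H^{-2}\langle F^\top,\nabla H\rangle$ cancel, leaving precisely $\tfrac{\|A\|^2}{H^2}w$ plus error, where $H^{-1}\langle E(\nu),\nu\rangle$ is absorbed into $\tfrac1{H^2}O(\|A\|e^{-2r})$ using $|H|\le\sqrt2\,\|A\|$; this is \eqref{Eq: evolutionequationw}. The main obstacle is the computation of $\Delta_{\bar\Sigma_t}w$: since $(M,g)$ need not be a warped product, both the connection discrepancy $E$ and the ambient Ricci curvature enter the calculation, and one must check that every such term is of order $O(e^{-2r})$ or $O(\|A\|e^{-2r})$ — this is where the sharp $O(e^{-3r})$ decay of $Q$ and its low-order derivatives is used, and the careful bookkeeping of the $\|A\|$-weights in the error is the delicate point; the rest is routine tensor computation.
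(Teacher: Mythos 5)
Your proposal is correct and follows essentially the same route as the paper: compute $\partial_t w$ and $\Delta_{\bar\Sigma_t}w$ using the key fact that $F=\sinh r\,\partial_r$ is conformal Killing for $\bar g$, with an $O(e^{-2r})$ discrepancy coming from $Q$, then combine; and derive \eqref{Eq: evolutionequationH}, \eqref{Eq: evolutionequationH-1} from the standard normal-variation formula for $H$ (which the paper summarizes as ``Ricatti equation''). Your bookkeeping via the error tensor $E$ and the explicit observation that $\Ric(F^\top,\nu)=O(e^{-2r})$ because $\Ric=-2g+O(e^{-3r})$ and $\langle\nu,F^\top\rangle=0$ is slightly more transparent than the paper's component computation but not a different method; you also correctly recover $\|A\|^2$ where the paper's displayed formulas contain the apparent typos $\|A\|$ and $\|A^2\|$.
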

\begin{proof}
Let $(x^1,x^2)$ be an orthogonal coordinate system on $\mathbb S^2$ and denote $x^0=r$, then $(x^0,x^1,x^2)$ is a coordinate system for $(M,g)$. Under this coordinate system, the metric is
\begin{equation*}
g_{0i}=\delta_{0i},\quad g_{\alpha\beta}=\sinh^2r g_{\mathbb S^2,\alpha\beta}+Q_{\alpha\beta}.
\end{equation*}
Here and in the sequel, we use $i,j$ to denote indices from 0 to 2 and $\alpha,\beta$ to denote indices from 1 to 2. It is easy to calculate
\begin{equation*}
(\text{\rm Hess}\,r)_{\alpha\beta}=\Gamma_{\alpha\beta}^0=-\sinh r\cosh rg_{\mathbb S^2,\alpha\beta}-\frac{1}{2}\partial_r Q_{\alpha\beta}.
\end{equation*}
From this, we calculate further that
\begin{equation}\label{Eq: evolutionequation1}
\langle\nabla_vF,w\rangle=\cosh r\langle v,w\rangle-\cosh rQ(v, w)+\frac{1}{2}\sinh r(\partial_rQ)(v,w),
\end{equation}
where $\nabla$ denotes the covariant derivative of $(M,g)$ and $v$ and $w$ are arbitrary vectors. Therefore,
\begin{equation}\label{Eq: evolutionequation2}
\frac{\partial}{\partial t}w=\langle\nabla_{\partial_t}F,\nu\rangle+\langle F,\nabla_{\partial_t}\nu\rangle=H^{-1}\cosh r+H^{-1}O(e^{-2r})+\frac{1}{H^2}\langle F,\nabla_{\bar\Sigma_t}H\rangle.
\end{equation}
Take $(y^1,y^2)$ to be a normal coordinate system on $\bar\Sigma_t$ around a point $p$, using (\ref{Eq: evolutionequation1}), we also obtain at the point $p$ that
\begin{equation}\label{Eq: evolutionequation3}
\begin{split}
\Delta_{\bar\Sigma_t}w&=\partial_\alpha\langle F_{\alpha},\nu\rangle+\langle F_\alpha,\nu_\alpha\rangle+\langle F,\nu_{\alpha\alpha}\rangle\\
&=O(e^{-2r})+O(\|A\|e^{-2r})+H\cosh r+\langle F,\nabla_{\bar\Sigma_t}H\rangle+\Ric(F^T,\nu)-\|A\|^2\langle F,\nu\rangle.
\end{split}
\end{equation}
Now, equation (\ref{Eq: evolutionequationw}) follows from (\ref{Eq: evolutionequation2}) and (\ref{Eq: evolutionequation3}).

Equation (\ref{Eq: evolutionequationH}) comes from Ricatti equation.
\end{proof}

\section{Basic Facts for weak and smooth IMCFs}\label{Append: IMCFproperty}
In this section, $(M,g)$ is an asymptotically hyperbolic manifold and $\Sigma_t$ denotes a weak solution of inverse mean curvature flow with a connected $C^2$-smooth initial data surface and $\bar\Sigma_t$ denotes a smooth solution of inverse mean curvature flow in $(M,g)$.

For any closed surface $\Sigma$ contained in the exterior region, we define the {\it outer radii} $\bar r$ and the {\it inner radii} $\underline r$ of $\Sigma$ by
\begin{equation}\label{maxminradius}
\bar r:=\max_{\Sigma}r \quad\text{\rm and}\quad\underline r:=\min_{\Sigma}r.
\end{equation}
In the following, we will use the notion $\bar r_t$ and $\underline r_t$ to represent the outer radii and the inner radii of $\Sigma_t$ or $\bar\Sigma_t$.

The first two lemmas are radii estimates for smooth or weak solutions of inverse mean curvature flow in $(M,g)$.

\begin{lemma}\label{Lem: pinch lemma}
There are positive constants $R_0$ and $C_0$ so that if $\bar\Sigma_t$ is a smooth or weak inverse mean curvature flow enclosing $B_{R_0}$, then we have
\begin{equation}
\underline r_0+\frac{1}{2}t-C_0\leq\underline r_t\leq\bar r_t\leq \bar r_0+\frac{1}{2}t+C_0,
\end{equation}
where $\bar r_t$ and $\underline r_t$ are the outer and inner radii of $\bar\Sigma_t$.
\end{lemma}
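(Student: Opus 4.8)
The plan is to prove the two-sided bound by using families of geodesic coordinate spheres $\partial B_\rho$ in the exterior chart as inner and outer barriers. Since (weak or smooth) inverse mean curvature flow expands the enclosed region monotonically, the hypothesis that $\bar\Sigma_0$ encloses $B_{R_0}$ forces every $\bar\Sigma_t$ to enclose $B_{R_0}$, hence to lie in the exterior region with $r\ge R_0$; this is what makes the barriers available for all $t$.

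First I would compute the mean curvature of $\partial B_\rho$. A direct computation of $\mathrm{Hess}\,r$ (as in the proof of Lemma \ref{Lem: evolutionequation}) together with the decay \eqref{Eq: errorQdecay} gives $H_g(\partial B_\rho)=2\coth\rho+O(e^{-3\rho})$, so that there is a universal constant $C_1$ with
\begin{equation*}
\tfrac12-C_1e^{-2\rho}\ \le\ \frac{1}{H_g(\partial B_\rho)}\ \le\ \tfrac12+C_1e^{-2\rho}\qquad\text{for all }\rho\ge R_0;
\end{equation*}
in particular each such sphere is a smooth mean-convex closed surface.

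Next I would run the barrier ODEs. For the upper bound, let $\rho_+$ solve $\dot\rho_+=\tfrac12+C_1e^{-2\rho_+}$ with $\rho_+(0)=\bar r_0$. Setting $\psi=\rho_+-\tfrac t2$ turns this into $\frac{\mathrm d}{\mathrm dt}\big(\tfrac12 e^{2\psi}\big)=C_1e^{-t}$, whence $e^{2\psi(t)}\le e^{2\bar r_0}+2C_1$ and $\rho_+(t)\le\bar r_0+\tfrac t2+C_0$ for a universal $C_0$. Since $\rho_+$ is increasing and $\ge R_0$, the inequality above gives $\dot\rho_+\ge 1/H_g(\partial B_{\rho_+(t)})$, so $\{\partial B_{\rho_+(t)}\}$ is a smooth supersolution of the flow enclosing $\bar\Sigma_0$; by the comparison principle for weak solutions of inverse mean curvature flow (cf. Lemma 2.2 of \cite{HI2001}; the classical maximum principle when $\bar\Sigma_t$ is smooth), $\bar\Sigma_t$ remains enclosed by $\partial B_{\rho_+(t)}$, i.e. $\bar r_t\le\bar r_0+\tfrac t2+C_0$. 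For the lower bound, let $\rho_-$ solve $\dot\rho_-=\tfrac12-C_1e^{-2\rho_-}$ with $\rho_-(0)=\underline r_0$; the same calculation gives $e^{2(\rho_-(t)-t/2)}\ge e^{2\underline r_0}-2C_1$, which stays positive and yields $\rho_-(t)\ge\underline r_0+\tfrac t2-C_0$ once $R_0$ is taken large enough that $2C_1e^{-2R_0}<\tfrac12$ (this also keeps $\dot\rho_->0$, so $\rho_-$ exists for all $t$). Now $\dot\rho_-\le 1/H_g(\partial B_{\rho_-(t)})$, so $\{\partial B_{\rho_-(t)}\}$ is a subsolution, and since $\bar\Sigma_0$ encloses $B_{\underline r_0}$, the comparison principle gives $\underline r_t\ge\underline r_0+\tfrac t2-C_0$. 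Combined with the trivial $\underline r_t\le\bar r_t$, this is the asserted chain of inequalities.

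The computation of $H_g(\partial B_\rho)$ and the ODE analysis are routine; the step that genuinely needs care is the application of the comparison principle to the \emph{weak} solution, where the ``jump phenomenon'' described in \cite{HI2001} can occur and one must invoke the avoidance/comparison framework of Huisken--Ilmanen. One also has to check, via \eqref{Eq: errorQdecay}, that the asymptotically hyperbolic error $Q=O(e^{-3r})$ does not reverse the sign of $\dot\rho_\pm-1/H_g(\partial B_{\rho_\pm})$ along the barriers — this is exactly why $R_0$ must be chosen large.
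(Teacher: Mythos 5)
Your proof is correct and takes essentially the same approach as the paper: coordinate spheres $\{r=\rho\}$ serve as sub- and super-solution barriers, $H(\partial B_\rho)=2\coth\rho+O(e^{-3\rho})$ is computed from the decay of $Q$, the barrier radii are estimated by integrating the resulting ODE, and the conclusion follows from the Huisken--Ilmanen comparison principle. The only cosmetic difference is that you bound $1/H$ rather than $H$ before integrating, which streamlines the ODE step.
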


\begin{proof}
We construct spherical sub-solutions and super-solutions as barriers to get the desired lower and upper bounds. Let $S_{\rho_t}$ be a smooth family of expanding spheres. Through direct calculation, it is easy to see
\begin{equation*}
H_{\rho_t}=\frac{2\cosh\rho_t}{\sinh\rho_t} +O(e^{-3\rho_t}).
\end{equation*}
Choosing $R_0$ large enough, we have
\begin{equation*}
\frac{2\cosh\rho_t}{\sinh\rho_t}-2e^{-2\rho_t}\leq H_{\rho_t}\leq \frac{2\cosh\rho_t}{\sinh\rho_t}+2e^{-2\rho_t},\quad\rho_t\geq R_0.
\end{equation*}
This means that the ordinary differential equation
\begin{equation}
\frac{\mathrm d\rho_t^\pm}{\mathrm d t}=\left(\frac{2\cosh\rho_t^\pm}{\sinh\rho_t^\pm}\pm 2e^{-2\rho_t^\pm}\right)^{-1},\quad\rho_0^\pm\geq R_0,
\end{equation}
gives a subsolution $S_{\rho^+_t}$ and a supersolution $S_{\rho^-_t}$. Taking integral, we obtain
\begin{equation*}
\ln\sinh\rho_t^{\pm}-\ln\sinh\rho_0^\pm\pm\left(e^{-2\rho_0^\pm}- e^{-2\rho_t^\pm}\right)=\frac{1}{2}t,\quad\rho_0^\pm\geq R_0.
\end{equation*}
This implies that $|\rho_t^\pm-\rho_0^\pm-\frac{t}{2}|\leq C_0$ for some universal constant $C_0$ independent of $\rho_0^\pm$. Now, the desired result follows easily from the comparison principle for inverse mean curvature flow.
\end{proof}

\begin{lemma}\label{Lem: pinch sigma t}
Let $\Sigma_t$ be a weak solution of inverse mean curvature flow, then there are positive constants $T_0$ and $C_0$ so that any slice $\Sigma_t$ with $t\geq T_0$ satisfies
\begin{equation}
\underline r_0+\frac{1}{2}t-C_0\leq\underline r_t\leq\bar r_t\leq \bar r_0+\frac{1}{2}t+C_0,
\end{equation}
\end{lemma}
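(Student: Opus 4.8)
The plan is to deduce Lemma \ref{Lem: pinch sigma t} from the barrier argument already established in Lemma \ref{Lem: pinch lemma} together with the fact that a weak solution eventually sweeps out the whole exterior region. First I would invoke Lemma \ref{Lem: pinch lemma}: there exist universal constants $R_0$ and $C_0$ such that any \emph{weak} inverse mean curvature flow enclosing $B_{R_0}$ satisfies the stated two-sided estimate for all subsequent times. So the only thing to check is that after some finite time $T_0$ the slices $\Sigma_t$ do enclose $B_{R_0}$, and that one can take the ``initial'' radii on the right-hand side to be controlled.

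The key step is therefore to show that $\Sigma_t$ exhausts $M$, i.e. that for any compact $K\subset M$ there is a time after which $\Sigma_t$ encloses $K$. Since $\{\Sigma_t\}$ is the weak solution produced by Huisken--Ilmanen with a connected $C^2$ initial data surface, the enclosed regions $\Omega_t$ are monotone increasing; if they failed to exhaust $M$, they would converge to a region whose boundary is a minimal (or outward-minimizing with vanishing mean curvature) hypersurface, contradicting the existence of the spherical supersolution barriers $S_{\rho_t}$ from Lemma \ref{Lem: pinch lemma} that push all the way to infinity (equivalently, the strict mean-convexity of large coordinate spheres in an asymptotically hyperbolic end forces the flow past every $B_R$ in finite time). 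Concretely, I would fix the supersolution $S_{\rho_t^-}$ starting from a large sphere enclosing the initial data surface; by the comparison principle for weak IMCF (Lemma 2.2 in \cite{HI2001}) the weak solution stays outside $S_{\rho_t^-}$ once they are nested, and since $\rho_t^-\to\infty$ the slices $\Sigma_t$ enclose $B_{R_0}$ for all $t\geq T_0$ with $T_0$ depending only on the initial data and $R_0$.

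Once $\Sigma_{T_0}$ encloses $B_{R_0}$, I would apply Lemma \ref{Lem: pinch lemma} to the time-shifted flow $\{\Sigma_{T_0+\tau}\}_{\tau\geq 0}$, which is again a weak IMCF enclosing $B_{R_0}$, obtaining
\begin{equation*}
\underline r_{T_0}+\tfrac12\tau-C_0\leq \underline r_{T_0+\tau}\leq \bar r_{T_0+\tau}\leq \bar r_{T_0}+\tfrac12\tau+C_0.
\end{equation*}
Writing $t=T_0+\tau$ and absorbing the fixed quantities $\underline r_{T_0}-\tfrac12 T_0$ and $\bar r_{T_0}-\tfrac12 T_0$ (and the initial radii $\underline r_0,\bar r_0$, which are finite since the initial surface is a fixed compact $C^2$ surface) into an enlarged universal constant $C_0$ gives exactly the claimed inequality $\underline r_0+\tfrac12 t-C_0\leq \underline r_t\leq \bar r_t\leq \bar r_0+\tfrac12 t+C_0$ for all $t\geq T_0$.

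The main obstacle I anticipate is the exhaustion step: one must rule out the possibility that the weak solution ``stalls'' at a finite region, which in the Huisken--Ilmanen theory happens precisely when the outward-minimizing hull develops a piece of zero mean curvature. In the asymptotically hyperbolic setting this is excluded because large coordinate spheres have mean curvature close to $2>0$ and serve as strict supersolutions filling the end, so the jump/plateau behaviour cannot persist to infinity; making this rigorous just requires citing the comparison principle and the barrier computation $H_{\rho_t}=2\coth\rho_t+O(e^{-3\rho_t})$ already recorded in the proof of Lemma \ref{Lem: pinch lemma}. Everything else is bookkeeping with universal constants.
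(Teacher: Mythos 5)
Your overall skeleton --- show $\Sigma_t$ eventually encloses $B_{R_0}$, then apply Lemma \ref{Lem: pinch lemma} to the time-shifted flow and absorb constants --- is reasonable, but it hides the actual content of Lemma \ref{Lem: pinch sigma t} inside a black-box appeal to Lemma \ref{Lem: pinch lemma} for \emph{weak} flows. The spherical families $S_{\rho^\pm_t}$ are sub-/super-solutions of an ODE, not weak IMCF solutions, so the comparison theorem of \cite{HI2001} (their Theorem 2.2, which compares two \emph{weak} solutions) does not apply to them directly, and one must cope with the possible jump of the weak flow to its strictly outer-minimizing hull. That careful comparison is exactly what the paper's proof of Lemma \ref{Lem: pinch sigma t} supplies: an open-closed argument in $t'$ that $\Sigma_{T_0+t'}$ remains inside $S^-_{\rho_{t'}}$, with openness obtained by (i) noting $S^-_{\rho_{t_0}}$ is strictly outer-minimizing so the hull $\Sigma^{+}_{T_0+t_0}$ is inside it, (ii) using Theorem 2.2 of \cite{HI2001} against the weak flow $\Sigma'_s$ started from $S^-_{\rho_{t_0}}$, and (iii) using Lemma 2.3 of \cite{HI2001} together with smooth comparison to show $\Sigma'_s$ stays inside $S^-_{\rho_{t_0+s}}$ for $s$ small. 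Simply citing Lemma \ref{Lem: pinch lemma} (whose own proof only says ``follows from the comparison principle'') sweeps this step under the rug.

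Your exhaustion step also has the inclusion reversed. You take a \emph{super}-solution sphere $S_{\rho^-_t}$ whose initial slice \emph{encloses} $\Sigma_0$ and assert the weak solution ``stays outside'' it; but a faster-expanding sphere that starts outside the flow region stays outside it, so comparison gives $\Sigma_t \subset S_{\rho^-_t}$ --- the upper bound, which says nothing about $\Sigma_t$ enclosing $B_{R_0}$. A lower bound by this method would require a sub-solution sphere starting \emph{inside} the initial region, which is unavailable when $\Sigma_0$ lies deep in $U$ and cannot enclose any $S_{\rho}$ with $\rho\geq R_0$. The correct and simpler route, which the paper uses, is Theorem 3.1 of \cite{HI2001}: the weak solution is the level-set flow of a proper, locally Lipschitz function $u$, and properness together with continuity gives a $T_0$ with $B_{R_0}\subset\{u<T_0\}$, i.e.\ $\Sigma_t$ encloses $B_{R_0}$ for all $t\geq T_0$.
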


\begin{proof}
Let $R_0$ be in Lemma \ref{Lem: pinch lemma}. Since a weak solution of inverse mean curvature flow is a level set solution with respect to a proper, locally Lipschitz function $u$ by Theorem 3.1 in \cite{HI2001}, there exists a $T_0$ such that $\Sigma_t$, $t\geq T_0$, encloses $B_{R_0}$. Denote $\bar r_{T_0}$ and $\underline r_{T_0}$ to be the outer and inner radii of $\Sigma_{T_0}$, then it suffices to show that the surface $\Sigma_{T_0+t'}$ always stays between the spheres $S_{\rho^+_{t'}}$ and $S_{\rho^-_{t'}}$ with $\rho^+_0=\underline r_{T_0}$ and $\rho^-_{0}=\bar r_{T_0}$ for all $t'\geq 0$.
In the following, we are going to prove that $\Sigma_{T_0+t'}$ is inside $S_{\rho^-_{t'}}$ for all $t'\geq 0$. {The other case} can be proved in a similar way.

Define
\begin{equation}
\mathcal S:=\left\{t'\geq 0:\Sigma_{T_0+s}\text{ \rm is inside }S_{\rho^-_{s}}\text{ \rm for all }s\in[0,t']\right\},
\end{equation}
we show that $\mathcal S$ is a non-empty, relatively closed and open subset of $[0,+\infty)$.

Obviously, $0\in\mathcal S$, so $\mathcal S$ is non-empty. Due to the lower semi-continuity of $\Sigma_t$ and the continuity of $S^-_{\rho_{t'}}$,  $\mathcal S$ must be closed. For the openness, we show that if $t_0\in\mathcal S$, there exists an $\epsilon$ such that $t_0+\epsilon\in\mathcal S$. Notice that, since $S^-_{\rho_{t'}}$ forms a foliation with positive mean curvature, $S^-_{\rho_{t_0}}$ is strictly outer-minimizing, then it is easy to see that the strictly outer-minimizing hull $\Sigma_{T_0+t_0}^+$ of $\Sigma_{T_0+t_0}$ is inside $S^-_{\rho_{t_0}}$. Denote $\Sigma'_s$ the weak solution of inverse mean curvature flow with initial data $S^-_{\rho_{t_0}}$ and use this as a barrier for $\Sigma_t$, from Theorem 2.2 in \cite{HI2001}, we know that $\Sigma_{T_0+t_0+s}$ {stays} inside $\Sigma'_s$. Combined with Lemma 2.3 in \cite{HI2001} and comparison principle for smooth inverse mean curvature flow, there is an $\epsilon>0$ such that $\Sigma_{T_0+t_0+s}$ is inside $S_{\rho_{t_0+s}}$ for $s\in[0,\epsilon]$. Therefore, $t_0+\epsilon\in\mathcal S$.
\end{proof}

\begin{corollary}\label{Cor: areapinchsigmat}
Let $T_0$ as above. There are constants $\underline\theta$ and $\bar\theta$ independent of $t$ such that $\Sigma_t$ with $t\geq T_0$ satisfies
\begin{equation}
\underline \theta \sinh^2\underline r_t\leq A(\Sigma_t)\leq \bar\theta \sinh^2\bar r_t.
\end{equation}
\begin{proof}
This follows easily from Lemma \ref{Lem: pinch sigma t} and the fact $A(\Sigma_t)=A(\Sigma_0)e^t$.
\end{proof}
\end{corollary}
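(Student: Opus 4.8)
The plan is to combine the two‑sided radius estimate of Lemma~\ref{Lem: pinch sigma t} with the exact area growth law $A(\Sigma_t)=A(\Sigma_0)e^t$ for inverse mean curvature flow, and then to convert the comparison ``$\underline r_t,\bar r_t\approx\frac12 t$'' into ``$\sinh^2\underline r_t,\sinh^2\bar r_t\approx e^t$'' using only elementary monotonicity of $\sinh$. For $t\geq T_0$ the slice $\Sigma_t$ encloses $B_{R_0}$ and lies in the exterior region, so $\underline r_t$ and $\bar r_t$ are well defined and $\underline r_t\geq R_0>0$.

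For the lower bound $\underline\theta\,\sinh^2\underline r_t\leq A(\Sigma_t)$, I would start from $\underline r_t\leq\bar r_t\leq\bar r_0+\frac12 t+C_0$ and use $\sinh x\leq\frac12 e^x$ to get $\sinh^2\underline r_t\leq\frac14 e^{2\bar r_0+2C_0}e^t$; since $A(\Sigma_t)=A(\Sigma_0)e^t$, taking $\underline\theta:=4A(\Sigma_0)e^{-2\bar r_0-2C_0}$ suffices. For the upper bound $A(\Sigma_t)\leq\bar\theta\,\sinh^2\bar r_t$, I would use $\bar r_t\geq\underline r_t\geq\underline r_0+\frac12 t-C_0$ together with $\sinh x\geq\frac12(1-e^{-2R_0})e^x$ for $x\geq R_0$ (which applies since $\bar r_t\geq R_0$), obtaining $\sinh^2\bar r_t\geq\frac14(1-e^{-2R_0})^2 e^{2\underline r_0-2C_0}e^t$, so that $\bar\theta:=4A(\Sigma_0)(1-e^{-2R_0})^{-2}e^{-2\underline r_0+2C_0}$ works.

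There is no genuine obstacle here; the content is entirely supplied by Lemma~\ref{Lem: pinch sigma t} and the area law, and the only point requiring care is the bookkeeping that $\underline\theta$ and $\bar\theta$ depend only on the fixed data $A(\Sigma_0),\bar r_0,\underline r_0,C_0,R_0$ and not on $t$, which is clear from the explicit formulas above. This is precisely the form in which the radius information gets used later (for instance to pass from Lemma~\ref{Lem: pinch sigma t} to the $A(\Sigma_t)^{-1/2}$‑type bounds appearing in Corollary~\ref{Cor: angleestimatesigmat}).
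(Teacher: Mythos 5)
Your proposal is correct and is exactly the intended (and paper's) argument: substitute the two-sided radius bounds from Lemma~\ref{Lem: pinch sigma t} into the elementary inequalities $\tfrac12(1-e^{-2R_0})e^x\le\sinh x\le\tfrac12 e^x$ (valid for $x\ge R_0$), and compare with $A(\Sigma_t)=A(\Sigma_0)e^t$. The paper's one-line proof leaves these computations implicit, and your explicit formulas for $\underline\theta,\bar\theta$ in terms of $A(\Sigma_0),\bar r_0,\underline r_0,C_0,R_0$ correctly show the required $t$-independence.
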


In the following, we introduce the interior estimate of mean curvature for smooth inverse mean curvature flow, which is established in \cite{HI2001}. For any $x\in M$, denote
\begin{equation*}
\sigma(x):=\sup\left\{r>0:Rc\geq-\frac{1}{300r^2},\,|\nabla d_x^2|\leq 3d_x,\,|\nabla^2d_x^2|\leq 3\,\,\text{\rm in}\,\,B_r(x)\right\},
\end{equation*}
where $d_x:=\text{\rm dist}(p,x)$. Then we have
\begin{lemma}[see Page 384 \cite{HI2001}]\label{Lem: upper bound mean curvature}
Let $\bar \Sigma_t$ is a smooth inverse mean curvature flow in $M$. For any $x\in\bar\Sigma_t$ and $0<r<\sigma(x)$, we have
\begin{equation}\label{Eq: upper bound mean curvature}
H(x,t)\leq \max\left\{\max_{\bar\Sigma_0\cap B_r(x)}H,\frac{C}{r}\right\},
\end{equation}
where $C$ is a universal constant depending only on the dimension of $M$.
\end{lemma}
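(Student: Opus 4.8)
The plan is to regard \eqref{Eq: upper bound mean curvature} as a local parabolic estimate for the mean curvature --- precisely the interior estimate on page~384 of \cite{HI2001}, whose proof I would reproduce. The three conditions built into the definition of $\sigma(x)$ --- the lower Ricci bound and the bounds $|\nabla d_x^2|\le 3d_x$, $|\nabla^2 d_x^2|\le 3$ in $B_r(x)$, where $d_x$ denotes the distance to $x$ --- are exactly what one needs to localize a maximum-principle argument for $H$. Concretely, I would work on the spacetime region $\mathcal C=\{(y,s):y\in\bar\Sigma_s,\ \mathrm{dist}(y,x)<r,\ 0\le s\le t\}$ and bound $\max_{\mathcal C}\eta H$, where $\eta$ is a cutoff supported in $B_r(x)$ manufactured from $d_x^2$, for instance $\eta=\big(1-d_x^2/r^2\big)_{+}^{2}$; since $d_x(x)=0$ one has $\eta\equiv 1$ near $x$, so such a bound yields \eqref{Eq: upper bound mean curvature} upon evaluating at $(x,t)$.

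For the driving mechanism I would feed the elementary inequalities $\|A\|^2\ge\tfrac12 H^2$ (valid for surfaces) and $\Ric(\nu,\nu)\ge-\tfrac1{300r^2}$ (which holds in $B_r(x)$ by the choice $r<\sigma(x)$) into the evolution equation for $H$ from Lemma~\ref{Lem: evolutionequation}, discarding the favourable term $-\tfrac{2}{H^3}|\nabla H|^2\le0$. This leaves the Bernoulli-type differential inequality
\[
\Big(\frac{\partial}{\partial t}-\frac{1}{H^{2}}\Delta_{\bar\Sigma_s}\Big)H\ \le\ -\frac{H}{2}+\frac{1}{300\,r^{2}\,H},
\]
whose right-hand side is already negative as soon as $H\ge C/r$. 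Hence at any interior spatial maximum $H$ cannot grow past the threshold $\max\{\max_{\bar\Sigma_0\cap B_r(x)}H,\ C/r\}$, which is the heart of the matter.

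The genuinely delicate step, and the one I expect to be the main obstacle, is the localization: deriving the parabolic inequality satisfied by the product $\eta H$. The material time derivative of $\eta$ contributes an extra term $H^{-1}\langle\nabla^M\eta,\nu\rangle$, because the cutoff is carried along a \emph{moving} surface, and expanding $\tfrac{1}{H^2}\Delta_{\bar\Sigma_s}(\eta H)$ produces the usual gradient cross term; both are absorbed using the Hessian bounds on $d_x^2$ in $B_r(x)$ together with Young's inequality, at the cost of enlarging $C$. On the lateral part $\{\mathrm{dist}(\cdot,x)=r\}$ of $\partial\mathcal C$ one has $\eta=0$, so it contributes nothing; the slice $s=0$ contributes exactly $\max_{\bar\Sigma_0\cap B_r(x)}H$; and the parabolic maximum principle on $\mathcal C$ then gives $\max_{\mathcal C}\eta H\le\max\{\max_{\bar\Sigma_0\cap B_r(x)}H,\ C/r\}$. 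For the precise power in $\eta$ and the bookkeeping of these error terms I would follow \cite{HI2001} verbatim.
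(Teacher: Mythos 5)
Your argument is correct and reproduces exactly the localized maximum-principle argument from [HI2001] that the paper merely cites without reproof: the definition of $\sigma(x)$ is designed to supply the Ricci lower bound $\Ric(\nu,\nu)\ge -\tfrac{1}{300r^2}$ and the Hessian bounds on $d_x^2$ needed to build the cutoff $\eta$, and feeding $\|A\|^2\ge\tfrac12 H^2$ into the $H$-evolution (\ref{Eq: evolutionequationH}) gives the decisive damping term $-H/2$. The only thing you implicitly correct is a typo in (\ref{Eq: evolutionequationH}), where $\|A\|$ should read $\|A\|^2$; your sketch of the absorption of the cutoff cross-terms is the standard bookkeeping and matches the cited source.
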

\begin{lemma}[see Theorem 3.1 \cite{HI2001}]\label{Lem: weakmeancurvaturebound}
$\Sigma_t$ has uniformly bounded weak mean curvature.
\end{lemma}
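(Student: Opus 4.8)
The plan is to read the bound straight off the elliptic-regularization construction of \cite{HI2001}, using the uniform interior estimate behind Lemma \ref{Lem: upper bound mean curvature}. Recall that by Theorem 3.1 of \cite{HI2001} the weak solution is $\Sigma_t=\partial\{u<t\}$ for a proper locally Lipschitz $u$ obtained as the limit, as $\varepsilon\to0$, of solutions $u_\varepsilon$ of the regularized equation $\Div\big(\nabla u_\varepsilon/v_\varepsilon\big)=v_\varepsilon$ with $v_\varepsilon=\sqrt{|\nabla u_\varepsilon|^2+\varepsilon^2}$, and that the weak mean curvature of $\Sigma_t$ equals $|\nabla u|$ on the regular part of the slice and $0$ on any jumped part. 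Hence it suffices to obtain a bound $|\nabla u|\le C$ almost everywhere with $C$ independent of $t$.

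First I would note that $(M,g)$ has bounded geometry: by (\ref{Eq: AHmetric})--(\ref{Eq: errorQdecay}) the sectional curvatures are uniformly bounded (and approach $-1$ at infinity) and the injectivity radius is bounded below. Consequently the radius function $\sigma(x)$ introduced before Lemma \ref{Lem: upper bound mean curvature} satisfies $\sigma_0:=\inf_{x\in M}\sigma(x)>0$; indeed a uniform lower Ricci bound $Rc\ge-\Lambda$ forces $Rc\ge-1/(300r^2)$ on every ball $B_r(x)$ with $r$ small, and bounded geometry gives the required Hessian bounds on $d_x^2$ on a fixed small ball around each point.

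Next I would run the interior-gradient-estimate argument of \cite{HI2001} (the source of Lemma \ref{Lem: upper bound mean curvature}) at the level of the regularizations: the scalar $v_\varepsilon$ satisfies a divergence-form differential inequality whose only curvature term is controlled by the lower Ricci bound, so the same test-function and maximum-principle argument that yields (\ref{Eq: upper bound mean curvature}) gives, uniformly in $\varepsilon$ and in $t$,
$$
v_\varepsilon(x)\le\max\Big\{\,\sup_{\Sigma_0}H,\ \frac{C}{\sigma_0}\,\Big\}\qquad\text{for all }x,
$$
where $C$ depends only on the dimension and $\sup_{\Sigma_0}H<\infty$ since $\Sigma_0$ is $C^2$ (replacing $\Sigma_0$ by its strictly outer-minimizing hull if necessary, on whose new part the relevant mean curvature vanishes). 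Letting $\varepsilon\to0$, using $u_\varepsilon\to u$ in $C^0_{\mathrm{loc}}$ and $\nabla u_\varepsilon\rightharpoonup\nabla u$ weakly in $L^2_{\mathrm{loc}}$, this bound descends to $|\nabla u|$ and hence to the weak mean curvature of every slice $\Sigma_t$; on jumped parts the weak mean curvature is $0$ and the bound is trivial.

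The main obstacle is the bookkeeping in the regularization, namely checking that the constants in the interior estimate of \cite{HI2001} are genuinely independent of both $\varepsilon$ and $t$. This reduces to the uniformity of the ambient geometry ($\sigma_0>0$) together with the finiteness of $\sup_{\Sigma_0}H$; granting these, the remaining steps are a direct transcription of the corresponding estimates of \cite{HI2001}.
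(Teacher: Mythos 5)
Your proposal is correct and reconstructs precisely the argument behind the paper's citation: the lemma is quoted directly from Theorem~3.1 of \cite{HI2001}, whose interior gradient estimate on the elliptic regularizations $u_\varepsilon$ yields $|\nabla u|\le\max\{\sup_{\Sigma_0}H,\,C/\sigma_0\}$, with uniformity across $t$ supplied exactly by the bounded geometry of $(M,g)$ (so that $\inf_x\sigma(x)>0$) and the $C^2$-smoothness of $\Sigma_0$. Since $|\nabla u|$ agrees a.e.\ with the weak mean curvature of $\Sigma_t$, this is the paper's intended proof.
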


We also have the following estimate for the second fundamental form of slices of an inverse mean curvature flow.
\begin{lemma}[see Theorem 5.1 \cite{H2001}]                           
Let $\bar\Sigma_t$ be a smooth inverse mean curvature flow in $(M,g)$. Denote $y_0$ an arbitrary point in $M$ and $\sigma_0$ no greater than the injective radius at $y_0$ such that
\begin{equation}\label{Eq: nearly Euclidean condition}
|\bar\nabla r|\leq 3\sigma_0,\quad \bar \nabla^2 r \leq 3\bar g,\quad \text{\rm in}\quad B_{\sigma_0}(y_0),\quad \text{\rm where} \quad r(x)=\text{\rm dist}(x,y_0)^2 .
\end{equation}
Assuming $\bar\Sigma_t$, $0\leq t<t_0$, has no boundary in $B_{\sigma_0}(y_0)$ and satisfies
\begin{equation}\label{Eq: angle pinch}
0<\beta_1\sigma_0\leq\langle X,\nu\rangle\leq\beta_2\sigma_0,\quad\text{\rm on}\quad \bar\Sigma_t\cap B_{\sigma_0}(y_0),
\end{equation}
for a given smooth vector field $X$. Furthermore
\begin{equation}\label{Eq: Burnstein mean curvature bound}
H_{max}(y_0,\sigma_0)=\sup_{t\geq 0}\sup_{\bar\Sigma_t\cap B_{\sigma_0}(y_0)}H<\infty.
\end{equation}
Then for any $0<\theta<1$, in $\bar\Sigma_t\cap B_{\theta \sigma_0}(y_0)$, we have
\begin{equation}
\lambda_{max}^2\leq C(\beta_1,\beta_2)(1-\theta^2)^2\max\{\sup_{\bar \Sigma_0\cap B_{\sigma_0}(y_0)}\lambda_{max}^2, \sigma_0^{-2}+H_{max}\sigma_0^{-1}+\tilde C)\},
\end{equation}
where $\lambda_{max}$ is the maximum of principle curvature and $\tilde C$ is a universal constant depending on $\beta_1,\beta_2$, $H_{max}$, $(\mathcal L_X g)_{max}$, $(\nabla\mathcal L_X g)_{max}$, $|Rm|_{max}$ and $|\nabla  Rm|_{max}$.
\end{lemma}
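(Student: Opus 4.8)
The strategy is a localized parabolic maximum principle in the style of Ecker--Huisken's interior estimates, applied to a test function that combines $\|A\|^2$, a gradient quantity built from $X$, and a spatial cutoff. First I observe that it suffices to bound $\|A\|^2$ (equivalently $\|\mathring A\|^2$) in the interior: for a mean-convex surface in a $3$-manifold one has $\lambda_{max}^2\le\|A\|^2=\tfrac12H^2+\|\mathring A\|^2$, so by \eqref{Eq: Burnstein mean curvature bound} the $\tfrac12H^2$ contribution is absorbed into the $H_{max}$-dependent terms on the right-hand side. Write $L:=\partial_t-H^{-2}\Delta_{\bar\Sigma_t}$ for the flow operator.

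Next I would record three ingredients. First, generalizing Lemma \ref{Lem: evolutionequation} (which is the case $X=\sinh r\,\partial_r$) to the given field $X$, a direct computation gives $Lw=H^{-2}\|A\|^2 w+H^{-2}E$, where $w=\langle X,\nu\rangle$ and the error $E$ is linear in $A$ with coefficients controlled by $(\mathcal L_X g)_{max}$, $(\nabla\mathcal L_X g)_{max}$ and $|Rm|_{max}$; hence, by \eqref{Eq: angle pinch}, $v:=\sigma_0\,w^{-1}$ satisfies $\beta_2^{-1}\le v\le\beta_1^{-1}$ and
\[
Lv\le-\frac{\|A\|^2}{H^2}\,v-\frac{2\sigma_0}{H^2}\frac{|\nabla w|^2}{w^3}+\frac1{H^2}\tilde E ,
\]
with $\tilde E$ of the same type. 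Second, Simons' identity under IMCF yields
\[
L\|A\|^2\le-\frac{2}{H^2}|\nabla A|^2+\frac{2}{H^2}\|A\|^4+\frac{c}{H^2}\big(\|A\|^2+1\big),
\]
where $c$ depends on $|Rm|_{max}$ and $|\nabla Rm|_{max}$; here $+2H^{-2}\|A\|^4$ is the genuinely bad reaction term. Third, for a cutoff $\varphi=(\sigma_0^2-d_{y_0}^2)_+^2$, the hypotheses \eqref{Eq: nearly Euclidean condition} together with $H\le H_{max}$ bound $|\nabla\varphi|^2\le C\sigma_0^{-2}\varphi$ and $H^{-2}|\Delta_{\bar\Sigma_t}\varphi|\le C(\sigma_0^{-2}+H_{max}\sigma_0^{-1})$, while $\varphi\ge(1-\theta^2)^2\sigma_0^4$ on $B_{\theta\sigma_0}(y_0)$.

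I would then apply the maximum principle to $f:=\|A\|^2\,g(v)\,\varphi$, where $g$ is a fixed smooth increasing convex function on $[\beta_2^{-1},\beta_1^{-1}]$ — for instance $g(v)=(a-v)^{-1}$ with $a>\beta_1^{-1}$ fixed — chosen so that $v\,g'(v)\ge 4g(v)$ and $g''(v)\ge g'(v)^2$ there. Since $\varphi$ vanishes on $\partial B_{\sigma_0}(y_0)$ and $\bar\Sigma_t$ has no boundary inside, $f$ attains its maximum over $\bigcup_{0\le t<t_0}(\bar\Sigma_t\cap B_{\sigma_0})$ at an interior spatial point and some time $t_*$; if $t_*=0$ the estimate follows from the $\sup_{\bar\Sigma_0\cap B_{\sigma_0}}$ term, so assume $t_*>0$, where $\nabla f=0$ and $Lf\ge0$. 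Expanding $Lf=g\varphi\,L\|A\|^2+\|A\|^2\varphi\,g'(v)Lv+\|A\|^2 g\,L\varphi-H^{-2}\|A\|^2 g''|\nabla v|^2-\ldots$ and using $\nabla\|A\|^2/\|A\|^2=-\nabla(g\varphi)/(g\varphi)$ at $t_*$, the decisive point is that $\|A\|^2\varphi g'(v)Lv\le -H^{-2}v g'(v)\|A\|^4\varphi$ beats $g\varphi\cdot 2H^{-2}\|A\|^4$ because $vg'\ge 4g$, leaving a net good $-H^{-2}\|A\|^4 g\varphi$; the remaining gradient cross-terms are absorbed using $-2H^{-2}|\nabla A|^2 g\varphi$ (via the Kato inequality $|\nabla A|^2\ge c\,|\nabla\|A\|\,|^2$), the convexity term $-H^{-2}g''|\nabla v|^2\|A\|^2\varphi$, and the cutoff bounds. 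What remains is $0\le Lf(t_*)\le H^{-2}(\text{error})$, which forces $f(t_*)\le C(\beta_1,\beta_2)\big(\sigma_0^{-2}+H_{max}\sigma_0^{-1}+\tilde C\big)$ with $\tilde C$ of the stated form; combining this with the initial-data alternative and dividing by $\varphi\ge(1-\theta^2)^2\sigma_0^4$ on $B_{\theta\sigma_0}(y_0)$ gives the claimed bound for $\lambda_{max}^2\le\|A\|^2$ there.

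The main obstacle is making the absorption in the previous paragraph genuinely work: the bad reaction term $+2H^{-2}\|A\|^4$ must be defeated, which requires the precise algebraic properties of $g$ and a careful matching of the $|\nabla A|^2$, $|\nabla v|^2$ and $|\nabla\varphi|^2$ contributions so that every non-error term carries the right sign. The ambient curvature terms ($Rm$, $\nabla Rm$) and the Lie-derivative terms of $X$ enter only as lower-order errors and are harmless once this leading-order structure is in place, but propagating them correctly through the computation of $Lf$ is where the real work lies. This is precisely the content of Theorem 5.1 in \cite{H2001}, whose proof applies here with only cosmetic changes.
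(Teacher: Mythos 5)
The paper does not reprove this lemma: it imports it verbatim, with the ``Cf.\ Theorem 5.1 \cite{H2001}'' attribution, and uses it as a black box, so there is no internal proof to compare against. Your sketch is a faithful reconstruction of the Ecker--Huisken--style interior curvature estimate that underlies the cited theorem: applying the parabolic maximum principle to $f=\|A\|^2\,g(v)\,\varphi$ with $v=\sigma_0\langle X,\nu\rangle^{-1}$ and a squared-distance cutoff $\varphi$, and using the favorable sign of the reaction term in $Lv$ (via $vg'\geq 4g$) to defeat $+2H^{-2}\|A\|^4$, is exactly the mechanism in the reference; the ambient-curvature and Lie-derivative contributions enter only through the lower-order error, as you say.

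Two small remarks. First, the exponent in the displayed conclusion should read $(1-\theta^2)^{-2}$ rather than $(1-\theta^2)^{2}$ --- a typo in the paper's statement --- and your own derivation correctly yields the former, since at the end you divide the interior bound on $f$ by $\varphi\geq(1-\theta^2)^2\sigma_0^4$. Second, the illustrative choice $g(v)=(a-v)^{-1}$ with $a>\beta_1^{-1}$ does not, for arbitrary $\beta_1<\beta_2$, satisfy $v\,g'(v)\geq 4g(v)$ on all of $[\beta_2^{-1},\beta_1^{-1}]$: the inequality $v/(a-v)\geq 4$ forces $a\leq \tfrac{5}{4}\beta_2^{-1}$, and combined with $a>\beta_1^{-1}$ this requires $\beta_2<\tfrac{5}{4}\beta_1$. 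One should use a function in the spirit of Ecker--Huisken (for instance $g(w)=(2aw-w^2)^{-1}$ in the variable $w=\langle X,\nu\rangle/\sigma_0$, with $a$ chosen relative to $\beta_2$) for which the growth and convexity conditions hold on the whole range; this is a cosmetic adjustment that does not affect the soundness of the scheme.
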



\begin{corollary}\label{Cor: interiorestimateh}
There is a $R_0>0$ such that if $\bar\Sigma_t$ is a smooth inverse mean curvature flow enclosing $B_{R_0}$ with $\langle\nu,\frac{\partial}{\partial r}\rangle\geq\delta_0$ on each slice $\bar\Sigma_t$ and $\bar\Sigma_0$ satisfies
\begin{equation}
\max_{\bar\Sigma_0}H\leq C_2\quad\text{\rm and}\quad\max_{\bar\Sigma_0}\|A \|\leq C_3,
\end{equation}
then
\begin{equation}\label{Eq: norm second fundamental form}
\|A\|_{L^\infty(\bar\Sigma_t)}\leq C(\delta_0,C_2,C_3).
\end{equation}
\end{corollary}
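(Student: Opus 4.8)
The plan is to deduce the global bound \eqref{Eq: norm second fundamental form} by applying the interior second-fundamental-form estimate (the lemma cited from Theorem 5.1 of \cite{H2001}) uniformly around every point of every slice, and then patching these local estimates together. First I would fix $R_0$ large enough that two things hold simultaneously in the region $r\ge R_0$: the geometry of $(M,g)$ is close enough to hyperbolic space that there is a uniform $\sigma_0>0$ (independent of the point $y_0$ in the exterior region and independent of $t$) for which the ``nearly Euclidean'' condition \eqref{Eq: nearly Euclidean condition} is satisfied in $B_{\sigma_0}(y_0)$; and the curvature tensor $|Rm|$, $|\nabla Rm|$ and the chosen vector field $X=\sinh r\,\partial_r$ (so that $\mathcal L_X g$ and $\nabla\mathcal L_X g$ are uniformly controlled by the asymptotically hyperbolic decay \eqref{Eq: errorQdecay}) are all uniformly bounded there. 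Since $\bar\Sigma_t$ encloses $B_{R_0}$ and is an expanding flow, all slices lie in this good region, so these uniform choices apply to every slice.

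Next I would check the two hypotheses of that lemma along the flow. The angle pinch \eqref{Eq: angle pinch}, namely $0<\beta_1\sigma_0\le\langle X,\nu\rangle\le\beta_2\sigma_0$ for $X=\sinh r\,\partial_r$, follows from the standing assumption $\langle\nu,\partial_r\rangle\ge\delta_0$ together with $\langle\nu,\partial_r\rangle\le 1$ and the fact that on $B_{\sigma_0}(y_0)$ the value of $\sinh r$ is comparable (with universal constants) to $\sigma_0^{-1}$ up to a bounded factor — here one uses that $\sigma_0$ is essentially a fixed small number, so really one takes $\beta_1,\beta_2$ proportional to $\delta_0$ and $1$ respectively after absorbing the local oscillation of $\sinh r$. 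The uniform mean-curvature bound \eqref{Eq: Burnstein mean curvature bound}, i.e.\ $H_{max}(y_0,\sigma_0)<\infty$ with a bound independent of $y_0$ and $t$, is exactly what Lemma \ref{Lem: upper bound mean curvature} provides: for any $x\in\bar\Sigma_t$ and $0<r<\sigma(x)$ one has $H(x,t)\le\max\{\max_{\bar\Sigma_0\cap B_r(x)}H,\,C/r\}\le\max\{C_2,\,C/r\}$, and since $\sigma(x)$ is bounded below by a universal constant in the good region (again by the asymptotically hyperbolic condition after enlarging $R_0$), we get $H\le C(\delta_0,C_2)$ on all slices.

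With these verified, the cited estimate yields, taking $\theta=\tfrac12$ say, that at any point of $\bar\Sigma_t\cap B_{\theta\sigma_0}(y_0)$,
\begin{equation*}
\lambda_{max}^2\le C(\beta_1,\beta_2)\max\Big\{\sup_{\bar\Sigma_0\cap B_{\sigma_0}(y_0)}\lambda_{max}^2,\ \sigma_0^{-2}+H_{max}\sigma_0^{-1}+\tilde C\Big\}\le C(\delta_0,C_2,C_3),
\end{equation*}
where $\sup_{\bar\Sigma_0}\lambda_{max}^2\le\|A\|_{L^\infty(\bar\Sigma_0)}^2\le C_3^2$ by hypothesis, $H_{max}\le C(\delta_0,C_2)$ by the previous paragraph, and $\tilde C$ depends only on the uniformly controlled quantities $\beta_1,\beta_2,H_{max},(\mathcal L_Xg)_{max},(\nabla\mathcal L_Xg)_{max},|Rm|_{max},|\nabla Rm|_{max}$, all of which are universal in the good region. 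Since every point of every slice lies in some $B_{\theta\sigma_0}(y_0)$ with $y_0$ in the exterior region, taking the supremum over all such points and over $t$ gives \eqref{Eq: norm second fundamental form} with the same constant, and $\|A\|\le\sqrt2\,\lambda_{max}$ in dimension three (or one simply notes $\|A\|^2=\lambda_1^2+\lambda_2^2\le 2\lambda_{max}^2$) finishes the proof.

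The main obstacle I anticipate is the bookkeeping in the first paragraph: one must verify that $\sigma_0$ can genuinely be chosen \emph{uniformly} in the point $y_0$ and independently of $t$ — that is, that the injectivity radius stays bounded below and the Hessian-of-distance-squared condition \eqref{Eq: nearly Euclidean condition} holds with uniform constants throughout the region $\{r\ge R_0\}$ — and likewise that $\mathcal L_Xg$ and $\nabla\mathcal L_Xg$ for $X=\sinh r\,\partial_r$ are uniformly bounded. Both are consequences of the decay \eqref{Eq: errorQdecay} of $Q$ and the explicit hyperbolic background, but making the constants honestly independent of the slice requires care, since $\sinh r\to\infty$; the point is that all the relevant quantities are computed in balls $B_{\sigma_0}(y_0)$ of \emph{fixed} small radius, on which the scale-invariant nature of the nearly-Euclidean hypotheses and the bounded geometry of the asymptotically hyperbolic end do the job.
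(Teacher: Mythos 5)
Your overall strategy is the same as the paper's: apply the interior second-fundamental-form estimate cited from Theorem~5.1 of \cite{H2001} on small balls $B_{\sigma_0}(y_0)$, verify the hypotheses (nearly Euclidean condition, angle pinch, mean curvature bound via Lemma~\ref{Lem: upper bound mean curvature}, uniform $Rm$, $\nabla Rm$, $\mathcal L_X g$, $\nabla\mathcal L_X g$), and patch. However, your choice of the auxiliary vector field $X=\sinh r\,\partial_r$ is a genuine error, and the two sentences you write to justify it do not hold. First, the angle-pinch hypothesis~(\ref{Eq: angle pinch}) demands $\beta_1\sigma_0\le\langle X,\nu\rangle\le\beta_2\sigma_0$ with $\beta_1,\beta_2$ controlled independently of $y_0$ and $t$; with $X=\sinh r\,\partial_r$ one has $\langle X,\nu\rangle=\sinh r\,\langle\partial_r,\nu\rangle$, and $\sinh r$ on $B_{\sigma_0}(y_0)$ is comparable to $\sinh r(y_0)\to\infty$, not to the fixed number $\sigma_0^{-1}$ as you claim, so no uniform $\beta_2$ exists. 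Second, the Lie derivative is not ``uniformly controlled by the decay of $Q$'': already for the exact hyperbolic background, $\mathcal L_{\sinh r\,\partial_r}\bar g=2\cosh r\,\bar g$, so $(\mathcal L_X g)_{\max}$ and $(\nabla\mathcal L_X g)_{\max}$ diverge like $e^r$, and since the constant $\tilde C$ in the cited estimate depends on these quantities the conclusion degenerates. The paper instead takes $X=\sigma_0\,\partial_r$, for which $\langle X,\nu\rangle=\sigma_0\langle\partial_r,\nu\rangle\in[\delta_0\sigma_0,\sigma_0]$ gives $\beta_1=\delta_0$, $\beta_2=1$ instantly and uniformly, and $\mathcal L_X g=2\sigma_0\,\mathrm{Hess}\,r$ is uniformly bounded because $|\mathrm{Hess}\,r|_{\bar g}\sim\coth r$ stays bounded for $r\ge R_0$.

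A smaller point: your last line, ``$\|A\|^2=\lambda_1^2+\lambda_2^2\le 2\lambda_{max}^2$'', is not valid if $\lambda_{max}$ denotes the largest \emph{signed} principal curvature (which is the reading the paper adopts, since it explicitly invokes $H>0$ to conclude). In that case $\lambda_{\min}$ could a priori be very negative, and one must use mean convexity $\lambda_1+\lambda_2=H>0$ to get $\lambda_{\min}>-\lambda_{max}$ and hence $\|A\|\le\sqrt{2}\,\lambda_{max}$.
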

\begin{proof}
Let $y_0\in\bar\Sigma_t$, for $R_0$ large enough, we can find a constant $\sigma_0$ such that (\ref{Eq: nearly Euclidean condition}) is true in $B_{\sigma_0}(y_0)$. Let $X=\sigma_0\frac{\partial}{\partial r}$, then (\ref{Eq: angle pinch}) is true for $\beta_1=\delta_0$ and $\beta_2=1$. Furthermore, from Lemma \ref{Lem: upper bound mean curvature}, equation (\ref{Eq: Burnstein mean curvature bound}) is true. Also, it is easy to verify from (\ref{Eq: errorQdecay}) that the ambient curvature $Rm$ and its derivative $\nabla Rm$ are uniformly bounded. Furthermore,
\begin{equation*}
|\mathcal L_X g|=\sigma_0\,|\text{\rm Hess}\,r|\leq C,\quad |\nabla\mathcal L_X g|=\sigma_0|\nabla^3 r|\leq C.
\end{equation*}
Therefore, we have
\begin{equation}
\lambda_{max}\leq C(\delta_0,C_2)\left(1+\sup_{\Sigma_t}\|A_t\|\right)\leq C(\delta_0,C_2,C_3).
\end{equation}
The corollary follows quickly from $H>0$.
\end{proof}

\begin{lemma}[see Corollary 5.6 \cite{H2001}]\label{Lem: C11estimate}
The weak second fundamental form of $\Sigma_t$ satisfies $\|A_t\|\leq C$, where $C$ is a universal constant independent of $t$.
\end{lemma}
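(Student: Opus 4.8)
The plan is to derive this from the interior $C^{1,1}$ estimate for weak inverse mean curvature flow established in \cite{H2001} (the weak counterpart, Corollary 5.6 there, of the interior second fundamental form estimate quoted above), the only real work being to check that every geometric quantity entering that estimate is uniform in $t$. Recall from \cite{HI2001} that $\Sigma_t=\partial\{u<t\}$ arises as a limit, as $\epsilon\to 0$, of smooth functions $u_\epsilon$ whose graphs in $M\times\mathbb R$ solve a smooth flow of inverse-mean-curvature type; on those graphs the vertical vector field $\partial_z$ plays the role of the auxiliary field $X$ in the interior estimate, and since the approximating surfaces are graphs over $M$, the quantity $\langle X,\nu\rangle$ is automatically pinched between two positive constants on any bounded region. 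I stress this point because it shows the estimate does \emph{not} rely on any star-shapedness of $\Sigma_t$ inside $M$, which is what allows Lemma \ref{Lem: C11estimate} to be used in the proof of Corollary \ref{Cor: angleestimatesigmat} without circular reasoning.

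Granting the estimate, the next step is the uniformity check. The constant produced by Corollary 5.6 of \cite{H2001} depends only on a bound for the weak mean curvature of $\Sigma_t$, on bounds for the ambient curvature and its first covariant derivative, on a lower bound for the injectivity radius, and on the scale $\sigma_0$ at which the nearly Euclidean conditions (\ref{Eq: nearly Euclidean condition}) hold. The weak mean curvature is bounded independently of $t$ by Lemma \ref{Lem: weakmeancurvaturebound}. The ambient curvature and its covariant derivatives are globally bounded on $M$: on a fixed compact region trivially, and on the exterior region because (\ref{Eq: errorQdecay}) forces the sectional curvatures of $g$ to be $-1+O(e^{-3r})$ with covariant derivatives of comparable size. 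Finally, because of the uniform negative curvature near infinity, the function $\sigma(x)$ (hence an admissible choice of $\sigma_0$) is comparable to a fixed universal constant and does not degenerate, even though by Lemma \ref{Lem: pinch sigma t} the slices $\Sigma_t$ eventually leave every compact set. Feeding these $t$-independent inputs into Corollary 5.6 of \cite{H2001} yields $\|A_t\|\leq C$ with $C$ universal; the remaining bounded-time range $t\in[0,T_0]$ is covered by the $C^2$ initial data together with the same estimate on a fixed compact ambient region.

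I expect the uniformity to be the main obstacle: one must rule out any deterioration of the constant as $t\to\infty$ and the slices run off to infinity. In the asymptotically flat setting this deterioration is unavoidable (the natural scale grows with the distance to the horizon), but here the asymptotically hyperbolic structure is exactly what prevents it --- near infinity $(M,g)$ is uniformly controlled at scale $O(1)$, with bounded geometry and injectivity radius bounded below, so every input to the interior estimate stays under control along the whole flow. A secondary, more bookkeeping-level issue is to make sure the weak estimate is applied on all of $\Sigma_t$, including on any ``flat'' pieces arising from a jump, where however $A$ vanishes and the bound is trivially satisfied.
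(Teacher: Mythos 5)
The paper gives no proof of this lemma at all --- it is stated as a direct citation to Corollary 5.6 of \cite{H2001} --- and your proposal correctly identifies that citation as the entire content and then supplies the (implicit) uniformity verification: weak mean curvature bounded via Lemma~\ref{Lem: weakmeancurvaturebound}, ambient curvature and $\sigma(x)$ controlled by the bounded geometry of the asymptotically hyperbolic end, and the graphical structure of the elliptic-regularization approximants in $M\times\mathbb R$ supplying the auxiliary vector field so that no star-shapedness is needed. One small imprecision worth flagging: the $z$-component of the unit normal on the graph of $u_\epsilon/\epsilon$ is of order $\epsilon$, not order $1$, so to get the pinching $\beta_1\sigma_0\leq\langle X,\nu\rangle\leq\beta_2\sigma_0$ with $\epsilon$-independent $\beta_1,\beta_2$ one should take $X=\epsilon^{-1}\sigma_0\,\partial_z$ (which is harmless since $\partial_z$ is Killing for the product metric, so $\mathcal L_Xg=0$ and no bad $\epsilon^{-1}$ factors enter the constant); this is a bookkeeping fix, not a gap in the argument.
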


For any closed surface $\Sigma$ in $(M,g)$, the Hawking mass is defined to be
\begin{equation}
m_H(\Sigma):=\frac{A(\Sigma)^{\frac{1}{2}}}{(16\pi)^{\frac{3}{2}}}\left(16\pi-\int_{\Sigma}(H^2-4)\,\mathrm d\mu\right).
\end{equation}
As for $\Sigma_t$, we can get a uniform lower bound for the Hawking mass $m_H(\Sigma_t)$, which is independent of $t$.

Before we show the Hawking masses of slices of IMCF has uniform lower bound, we need to show there is an upper bound for the number of components of each slice $\Sigma_t$ of weak inverse mean curvature flow from a connected $C^2$ surface , and  $\Sigma_t$   is connected when $t$ is large enough. The proof of the following lemma even works when the AH manifold $(M^3,g)$ is non orientable.
 The idea follows from Huisken and Ilmanen's work in \cite{HI2001}, where the same issue appears in the analysis for monotonicity of Hawking mass along the weak IMCF. The statement of the lemma is as follows:
\begin{lemma}
Let $\{\Sigma_t\}$ be a weak IMCF from a connected $C^2$ surface $\Sigma$ in asymptotically hyperbolic manifold $(M^3,g)$. Then there is at most $\beta_1+1$ connected components for each $\Sigma_t$, where $\beta_1$ is the number of generators for the fundamental group $\pi_1(M)$. Furthermore, if $\Sigma_t$ is contained in the exterior region, then it is connected.
\end{lemma}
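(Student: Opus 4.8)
The plan is to bound the number of connected components of $\Sigma_t$ by combining a topological constraint coming from the ambient fundamental group with the fact, essential to Huisken--Ilmanen's monotonicity, that a weak solution of IMCF is obtained by expanding the strictly outer-minimizing hulls of the level sets, so no component of $\Sigma_t$ can be null-homologous (each component bounds on only one side within the region it sweeps out). First I would recall from Lemma~2.3 and the structure theory in \cite{HI2001} that for a.e.\ $t$ the slice $\Sigma_t = \partial\{u < t\}$ is a $C^{1,\alpha}$ hypersurface which is the boundary of its own strictly outer-minimizing hull, and that each connected component $\Gamma$ of $\Sigma_t$ must be \emph{homologically nontrivial} in $M \setminus \Omega_0$ (where $\Omega_0$ is the region enclosed by the initial data): if some component were the boundary of a compact region disjoint from the others, the jump/minimizing-hull construction would have deleted it or absorbed it, contradicting that $\Sigma_t$ is the outward-minimizing boundary of $\{u<t\}$.

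The key step is the homological count. Let $\Omega_t = \{u < t\}$, a region with connected complement in $M$ (since the exterior of an exhausting flow in an AH manifold is an exterior region, which is connected). The boundary $\Sigma_t = \partial\Omega_t$ represents the trivial class in $H_2(M;\mathbb{Z}_2)$, but each individual component represents a class in $H_2(M\setminus \Sigma; \mathbb{Z}_2)$; the point is that no nonempty proper subcollection of components can be null-homologous in the exterior region, or else that subcollection would enclose a compact piece that the outer-minimizing-hull operation of the weak flow would have filled in. Hence the components, viewed in $H_1$ via their linking with loops in $M$, are ``independent'' relative to $\Sigma$, and their number is controlled by $1 + \dim H_1(M;\mathbb{Z}_2) \le \beta_1 + 1$ — the ``$+1$'' accounting for the single original component $\Sigma$ whose class we quotient by. I would make this precise by taking the initial connected surface $\Sigma$, capping off to reduce to counting independent classes, and invoking that $b_1(M) \le \beta_1$ where $\beta_1$ is the number of generators of $\pi_1(M)$.

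For the ``furthermore'' statement: once $\Sigma_t$ lies in the exterior region $M\setminus U \cong \mathbb{R}^3 \setminus B_1$, which is simply connected in the relevant range (it is diffeomorphic to $\mathbb{S}^2 \times [1,\infty)$), every closed surface in it separates, so a multi-component $\Sigma_t$ would have an ``innermost'' component bounding a compact region on its inside within the exterior region; but the weak flow's strictly outer-minimizing property forbids such a component (it would not be the outward-minimizing boundary of $\{u<t\}$), forcing connectedness. Concretely I would argue: if $\Sigma_t$ has $\ge 2$ components in $M\setminus U$, order them by inclusion of the regions they bound toward infinity; the component not outermost is outer-minimized away, contradiction; alternatively cite directly that in $\mathbb{S}^2 \times [1,\infty)$ all of $H_2$ is generated by the slice $\mathbb{S}^2 \times \{pt\}$, so only one homologically nontrivial component is possible, and the null-homologous ones are excluded by the above.

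The main obstacle I expect is making rigorous the assertion that a weak IMCF from a \emph{connected} surface cannot spontaneously produce null-homologous components — this is exactly the delicate point in \cite{HI2001}'s proof that the Hawking mass behaves well across jumps, and it requires carefully invoking the minimizing-hull construction (Lemma~1.4 and the ``$E_t = E'_t$ for a.e.\ $t$'' statements there) rather than anything about smoothness; one has to rule out that a component is created ``at infinity'' or pinched off in a way that bounds a void. A secondary technical point is being careful about $\mathbb{Z}_2$ versus $\mathbb{Z}$ coefficients so that the argument works even when $M$ is non-orientable, which is why one counts with $\mathbb{Z}_2$-Betti numbers and bounds $b_1(M;\mathbb{Z}_2)$ by the number $\beta_1$ of generators of $\pi_1(M)$.
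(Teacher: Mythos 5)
Your proposal captures the right intuition — bound the component count by a topological pairing with $\pi_1(M)$, and use the structure theory of weak IMCF from \cite{HI2001} to exclude ``spontaneous'' components — but as written it has genuine gaps and does not quite match the mechanism the paper uses.

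The paper's argument is concrete. It invokes the fact from the proof of \cite[Lemma 4.2]{HI2001} that the region outside $\Sigma$ is split by $\Sigma_t$ into exactly \emph{two} connected components $U$ and $V$, defines the homomorphism $\Phi:\pi_1(M)\to\mathbb Z_2^N$, $\alpha\mapsto(I_2(\alpha,\Sigma_{t,1}),\dots,I_2(\alpha,\Sigma_{t,N}))$, observes $\mathrm{rank}\,\mathrm{Im}(\Phi)\le\beta_1$, and notes that the vectors $e_k+e_l$ span an $(N-1)$-dimensional space; hence if $N>\beta_1+1$ some $e_k+e_l\notin\mathrm{Im}(\Phi)$. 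It then uses the connectedness of $U$ and $V$ to \emph{construct} a loop $\gamma$ that crosses $\Sigma_{t,k}$ and $\Sigma_{t,l}$ once each and no other component, realizing $e_k+e_l$ — a contradiction. For the exterior region it constructs the same $\gamma$ and flows it inward along $-\eta(r)\partial_r$ to make it disjoint from $\Sigma_t$, contradicting homotopy-invariance of $I_2$.

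Here are the concrete gaps in your version. (i) You argue through the claim ``no proper subcollection of components is null-homologous'' and then count independent classes, but you do not actually derive it; you acknowledge this is ``the main obstacle.'' More importantly, what is really needed is the stronger geometric fact that \emph{both} sides $U$, $V$ of $\Sigma_t$ are connected — that is what lets one build the loop $\gamma$ for any chosen pair $k,l$, and it is supplied directly by \cite[Lemma 4.2]{HI2001}. Homological non-triviality of each component follows from, but is strictly weaker than, that connectedness, and your dimension count hangs in the air without the explicit loop. (ii) Your explanation of the ``$+1$'' (quotienting by ``the class of $\Sigma$'') is not the right reason; the $+1$ comes from the single relation $\sum_i[\Sigma_{t,i}]=[\partial\Omega_t]=0$ (equivalently, that the $e_k+e_l$ span only the hyperplane, dimension $N-1$), not from anything about the initial surface $\Sigma$ itself. (iii) In the ``furthermore'' part, the statement that $H_2$ of the exterior region being $\mathbb Z_2$ forces ``only one homologically nontrivial component'' does not follow: two disjoint components could both represent the nonzero class, with their union bounding an annular region. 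The paper sidesteps this entirely by pushing $\gamma$ inward past $\Sigma_t$ with an explicit vector-field flow; you would need either that deformation argument or a separate argument (using connectedness of $U$) to rule out nested configurations.

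So: same circle of ideas, but the proposal is a sketch with missing steps rather than a proof, and the actual load-bearing fact (both complementary regions connected, from \cite[Lemma 4.2]{HI2001}) is replaced by a weaker claim that you do not establish.
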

\begin{remark}
Since we can find a compact subset $K$ such that $M-K$ is diffeomorphic to $\mathbb R^3-\bar B_1$, every closed curve can be deformed continuously along a smooth flow into a fixed compact subset of $M$ just as in our proof below. As a result, the fundamental group $\pi_1(M)$ must be finitely generated.
\end{remark}
\begin{proof}
From the proof of \cite[Lemma 4.2]{HI2001}, we may assume $\Sigma_t=\{u=t\}$ and the region outside $\Sigma$ is divided into two connected components $U$ and $V$ by $\Sigma_t$. Assume $\Sigma_t$ has $N$ connected components, labeled by $\Sigma_{t,1},\ldots,\Sigma_{t,N}$. Consider the map
$$
\Phi:\pi_1(M)\to \mathbb Z_2^N,\quad \alpha\mapsto(I_2(\alpha,\Sigma_{t,1}),\ldots,I_2(\alpha,\Sigma_{t,N})),
$$
where $I_2(\alpha,\Sigma_{t,1})$ represents the mod-2 intersection number of $\alpha$ and $\Sigma_{t,i}$. Clearly $\Phi$ is a group homomorphism and so the rank of $Im(\Phi)$ is no greater than $\beta_1$ --- the number of generators for $\pi_1(M)$. Suppose that $N$ is strictly greater than $\beta_1+1$, then we can find $k$ and $l$ such that there is no $\alpha\in \pi_1(M)$ such that
$$
I_2(\alpha,\Sigma_{t,k})=1,\quad I_2(\alpha,\Sigma_{t,l})=1,\quad I_2(\alpha,\Sigma_{t,i})=0,\quad i\neq k,l.
$$
However, due to the connectedness of $U$ and $V$ we can construct a closed curve $\gamma$ such that $\gamma$ crosses $\Sigma_t$ via $\Sigma_{t,k}$ and returns via $\Sigma_{t,l}$, which leads to a contradiction.

Next we prove the second part of the lemma. For asymptotically hyperbolic manifold there is a compact subset $K$ such that $M-K$ is diffeomorphic to $\mathbb R^3-\bar B_1$. Such region is called an exterior region. Assume now $\Sigma_t$ is a slice of weak IMCF contained in the exterior region. As before the region outside $\Sigma$ is divided into two connected component $U$ and $V$ by $\Sigma_t$. If $\Sigma_t$ has more than one components, we take $\gamma$ to be a closed curve crossing through one component and returning through another one. It is easy to see $\gamma$ has nonzero mod-2 intersection number with these components. Since the mod-2 intersection number is invariant under homotopy, we deduce a contradiction by showing that $\gamma$ can be deformed continuously disjoint from $\Sigma_t$. Use $\mathbb R^3-\bar B_1$ as a coordinate chart for $M-K$. Without loss of generality we may assume $\Sigma_t$ is outside $B_3$. Take a smooth cutoff function $0\leq \eta\leq 1$ such that $\eta\equiv 0$ in $(-\infty, 1)$ and $\eta\equiv 1$ in $(2,+\infty)$. Define a smooth vector field
$$
X=-\eta(r)\frac{\partial}{\partial r}
$$
on $\mathbb R^3-\bar B_1$. By zero extension it can be viewed as a global smooth vector field on $M$. Denote $F:M\times \mathbb R\to M$ to be the flow generated by $X$. As time $\tau$ grows, the curve $F(\gamma,\tau)$ will eventually leave $\mathbb R^3-B_3$ and so it will have empty intersection with $\Sigma_t$.
\end{proof}

With above lemma we can obtain a uniform lower bound for Hawking mass.
\begin{lemma}\label{Lem: hawkingmasssigmat}
There is a universal constant $\Lambda$ such that $m_H(\Sigma_t)\geq -\Lambda$ for all $t\geq 0$.
\end{lemma}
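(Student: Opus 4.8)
The plan is to split the time axis at the threshold $T_0$ from Lemma~\ref{Lem: pinch sigma t}. On the compact range $[0,T_0]$ the bound is immediate: the area grows as $A(\Sigma_t)=A(\Sigma_0)e^t\le A(\Sigma_0)e^{T_0}$, and Lemma~\ref{Lem: weakmeancurvaturebound} gives $\|H\|_{L^\infty(\Sigma_t)}\le C$, so
\[
16\pi-\int_{\Sigma_t}H^2-4\,\mathrm d\mu=16\pi+4A(\Sigma_t)-\int_{\Sigma_t}H^2\,\mathrm d\mu\ge 16\pi-C^2A(\Sigma_0)e^{T_0},
\]
whence $m_H(\Sigma_t)\ge-\Lambda_0$ on $[0,T_0]$ for a universal $\Lambda_0$. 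The bulk of the work is on $[T_0,\infty)$, where I would establish a Geroch-type \emph{almost}-monotonicity for $m_H$, the only defect being the term $R+6=O(e^{-\alpha r})$ from (\ref{Eq: scalarcurvature}), which decays fast enough to be integrable along the flow.

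For the monotonicity, set $Q(t):=16\pi+4A(\Sigma_t)-\int_{\Sigma_t}H^2\,\mathrm d\mu$, so $m_H(\Sigma_t)=(16\pi)^{-3/2}A(\Sigma_t)^{1/2}Q(t)$. Along a smooth IMCF, using $\partial_t(\mathrm d\mu)=\mathrm d\mu$, the evolution equation $\partial_tH=-\Delta H^{-1}-(\|A\|^2+\Ric(\nu,\nu))H^{-1}$ from Lemma~\ref{Lem: evolutionequation}, the Gauss equation $2K=R-2\Ric(\nu,\nu)+H^2-\|A\|^2$, the splitting $\|A\|^2=\|\mathring A\|^2+\tfrac12H^2$, and Gauss--Bonnet $\int_{\Sigma_t}2K\,\mathrm d\mu=4\pi\chi(\Sigma_t)\le 8\pi$ (valid for $t\ge T_0$ since then $\Sigma_t$ is connected by the preceding lemma, using the weak Gauss--Bonnet formula of Lemma~5.4 in \cite{HI2001} in the $W^{2,2}$ setting), one computes
\[
\frac{\mathrm d}{\mathrm dt}\int_{\Sigma_t}H^2\,\mathrm d\mu=\int_{\Sigma_t}\Big(-\frac{2|\nabla H|^2}{H^2}-\|\mathring A\|^2-\frac{H^2}{2}+2K-R\Big)\mathrm d\mu.
\]
Dropping the non-positive terms $-2|\nabla H|^2H^{-2}$ and $-\|\mathring A\|^2$, writing $-R=6-(R+6)$, and inserting $\int_{\Sigma_t}2K\le 8\pi$, this rearranges to $\frac{\mathrm d}{\mathrm dt}Q(t)\ge-\tfrac12 Q(t)-\int_{\Sigma_t}|O(e^{-\alpha r})|\,\mathrm d\mu$, equivalently
\[
\frac{\mathrm d}{\mathrm dt}m_H(\Sigma_t)\ge-\frac{A(\Sigma_t)^{1/2}}{(16\pi)^{3/2}}\int_{\Sigma_t}|O(e^{-\alpha r})|\,\mathrm d\mu.
\]

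Next I would pass this to the weak flow via the elliptic regularization of \cite{HI2001}: at a jump time the slice is replaced by its strictly outer-minimizing hull, over which the area is unchanged and the newly created portion is minimal, so $\int_{\Sigma_t}H^2$ does not increase and $Q$ does not decrease; together with the lower semicontinuity of $\int H^2$ along the approximation, the inequality above holds in the integrated sense for $\Sigma_t$, $t\ge T_0$. Finally, by Lemma~\ref{Lem: pinch sigma t} one has $r\ge\underline r_t\ge\underline r_0+\tfrac12 t-C_0$ on $\Sigma_t$ while $A(\Sigma_t)=A(\Sigma_0)e^t$, so
\[
\frac{A(\Sigma_t)^{1/2}}{(16\pi)^{3/2}}\int_{\Sigma_t}|O(e^{-\alpha r})|\,\mathrm d\mu\le C\,e^{t/2}\cdot e^{-\alpha t/2}\cdot e^{t}=C\,e^{\frac{3-\alpha}{2}t},
\]
which is integrable on $[T_0,\infty)$ precisely because $\alpha>3$. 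Integrating from $T_0$ to $t$ gives $m_H(\Sigma_t)\ge m_H(\Sigma_{T_0})-C\int_{T_0}^{\infty}e^{\frac{3-\alpha}{2}s}\,\mathrm ds$, and combining with the bound on $[0,T_0]$ we obtain $m_H(\Sigma_t)\ge-\Lambda$ for all $t\ge 0$ with a universal $\Lambda$.

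The delicate point is the passage of this Geroch-type almost-monotonicity to the weak flow across jump times: one must check, within the framework of \cite{HI2001}, that the defect term $\int_{\Sigma_t}|O(e^{-\alpha r})|\,\mathrm d\mu$ — absent in the $R\ge 0$ setting treated there — survives the elliptic-regularization limit and is compatible with the jump analysis. The smooth computation of $\frac{\mathrm d}{\mathrm dt}\int H^2$ and the decay estimate for the error term via the radius bound are routine.
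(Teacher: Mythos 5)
Your proof is correct and follows the same underlying strategy as the paper for large times, namely Geroch almost-monotonicity of the Hawking mass along the weak flow, with the defect coming from $R+6=O(e^{-\alpha r})$ and integrability in $t$ following from the radius lower bound $\underline r_t\ge \underline r_0+\tfrac12 t-C_0$ together with $\alpha>3$. The smooth computation you carry out (evolution of $\int H^2$, Gauss equation, Gauss--Bonnet with $\chi\le 2$) is exactly the standard derivation; the paper instead cites this as an already-established weak Geroch monotonicity formula (Lemma 5.8 in \cite{H2001}), which is how it sidesteps the ``delicate point'' you flag about passing the inequality through jump times. That citation resolves the one step you leave as a sketch, so the gap you identify is real in your write-up but not in the underlying argument.

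Where you genuinely differ from the paper is on the initial interval. You dispose of $[0,T_0]$ directly from the uniform weak mean curvature bound (Lemma~\ref{Lem: weakmeancurvaturebound}) and bounded area, getting $\int_{\Sigma_t}H^2\le C^2A(\Sigma_0)e^{T_0}$ and hence $m_H(\Sigma_t)\ge -\Lambda_0$ with no further input. The paper instead runs the Geroch monotonicity formula all the way down to $t=0$, and therefore needs to control $\chi(\Sigma_\tau)$ before $\Sigma_\tau$ enters the exterior region; this forces it to invoke the preceding topological lemma bounding the number of components by $\beta_1+1$ (so $\chi(\Sigma_\tau)\le 2(\beta_1+1)$). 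Your route to the bound on $[0,T_0]$ is more elementary and bypasses the component-counting lemma entirely on that interval; the trade-off is that you need $\|H\|_{L^\infty}\le C$ there, which the paper does have available but chooses not to use at this point. Both approaches are valid and yield a universal $\Lambda$; for $t\ge T_0$ the two proofs are essentially identical, both relying on connectedness of $\Sigma_t$ in the exterior region to get $\chi(\Sigma_t)\le 2$.
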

\begin{proof}
Let $T_0$ be as in Lemma \ref{Lem: pinch sigma t}, we can choose a fixed $t_0\geq T_0$ such that $\Sigma_{t_0}$ is contained in the exterior region. From Geroch monotonicity formula in \cite[Lemma 5.8]{H2001} we have
\begin{equation*}
m_H(\Sigma_t)\geq m_H(\Sigma_s)+\frac{1}{(16\pi)^{\frac{3}{2}}}\int_s^tA(\Sigma_\tau)^{\frac{1}{2}}\left( 16\pi-8\pi\chi(\Sigma_\tau)+\int_{\Sigma_\tau}(R+6)\,\mathrm d\mu\right)\mathrm d\tau.
\end{equation*}
First let us take $s=0$. With the facts $\chi(\Sigma_\tau)\leq 2(\beta_1+1)$ and $A(\Sigma_\tau)=A(\Sigma_0)e^\tau$, we conclude
\begin{equation*}
\begin{split}
m_H(\Sigma_{t})\geq -\frac{1}{2\pi^{\frac{1}{2}}}\beta_1\left.A(\Sigma_\tau)^{\frac{1}{2}} \right|_{0}^{t_0}-\frac{1}{96\pi^{\frac{3}{2}}}\min_{\bar\Omega_{t_0}}(R+6)_- \left.A(\Sigma_\tau)^{\frac{3}{2}} \right|_{0}^{t_0},\quad \forall\,0\leq t\leq t_0,
\end{split}
\end{equation*}
where $(R+6)_-$ is the negative part of function $R+6$.
Next we take $s=t_0$. Using the facts $\chi(\Sigma_\tau)\leq 2$, $A(\Sigma_\tau)=A(\Sigma_0)e^\tau$ and $R+6=O(e^{-3\alpha})$, we obtain
\begin{equation*}
m_H(\Sigma_t)\geq m_H(\Sigma_{t_0})-CA(\Sigma_0)^{\frac{3}{2}}\frac{2}{\alpha-2} e^{-\frac{\alpha-3}{2}t_0},\quad \forall\, t\geq t_0.
\end{equation*}
This completes the proof.
\end{proof}

The last is a basic regularity result for slices $\Sigma_t$ of the weak solution of an inverse mean curvature flow in $(M,g)$.
\begin{lemma}\label{Lem: W 2,p norm sigma t}
$\Sigma_t$ has uniform $C^{1,\alpha}$ and $W^{2,p}$ norm for any $0<\alpha<1$ and $p>1$.
\end{lemma}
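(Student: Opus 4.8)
The plan is to obtain the estimate from the regularity theory for weak inverse mean curvature flow in \cite{HI2001}, keeping track of the $t$-independence of all constants. Recall that, by Theorem 3.1 of \cite{HI2001}, the weak flow is realized as the level sets $\Sigma_t=\partial\{u<t\}$ of a proper, locally Lipschitz function $u$ which solves $\mathrm{div}(\nabla u/|\nabla u|)=|\nabla u|$ in the variational sense, and that each sublevel set $E_t:=\{u<t\}$ — as well as the strictly minimizing hull reached at a jump time — is a minimizing hull, so $\Sigma_t$ minimizes area from the outside in every compact region. Moreover the weak mean curvature of $\Sigma_t$ equals $|\nabla u|$ a.e.\ on $\Sigma_t$, and by Lemma \ref{Lem: weakmeancurvaturebound} this is bounded by a constant independent of $t$.

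First I would establish a uniform $C^{1,\alpha_0}$ bound for some $\alpha_0\in(0,1)$. Since $(M,g)$ is asymptotically hyperbolic, it has bounded geometry — a uniform two-sided curvature bound and a uniform positive lower bound on the injectivity radius — on the whole region swept out by $\{\Sigma_t\}_{t\ge 0}$. Feeding this, together with the uniform weak mean curvature bound, into the minimizing-hull regularity used in \cite{HI2001} (the same theory underlying the second fundamental form estimate in Lemma \ref{Lem: C11estimate}, cf.\ \cite{H2001}), one obtains a radius $r_0>0$, independent of $t$ and of the base point, such that near each $p\in\Sigma_t$ the surface $\Sigma_t$ is, in geodesic normal coordinates of $(M,g)$ centered at $p$, the graph of a function $v$ over a disk of radius $r_0$ in $T_p\Sigma_t$, with $\|v\|_{C^{1,\alpha_0}}\le C$ for constants $C,\alpha_0$ depending only on the bounded geometry of $M$ and the mean curvature bound. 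This is where the minimizing-hull property enters essentially, through the $\varepsilon$-regularity for (almost) perimeter minimizers of De Giorgi--Tamanini type.

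Granting this, $v$ satisfies, in the weak sense on the disk of radius $r_0/2$, the prescribed mean curvature equation
\begin{equation*}
a^{ij}(x)\,\partial_{ij}v=f(x),\qquad a^{ij}(x):=\delta^{ij}-\frac{\partial_i v\,\partial_j v}{1+|\nabla v|^2},
\end{equation*}
after absorbing the (smooth, bounded) lower-order ambient terms into $f$; here $a^{ij}\in C^{0,\alpha_0}$ and uniformly elliptic because $v\in C^{1,\alpha_0}$, and $f\in L^\infty$ with $\|f\|_{L^\infty}\le C$ by the weak mean curvature bound and bounded geometry. The interior $L^p$ estimate for linear elliptic equations with continuous coefficients then gives $v\in W^{2,p}$ on the disk of radius $r_0/4$ with $\|v\|_{W^{2,p}}\le C(p)$ for every $p<\infty$, uniformly in $t$; by Sobolev embedding on a $2$-dimensional disk this upgrades the H\"older exponent in the $C^{1,\alpha}$ bound to any $\alpha\in(0,1)$. (Alternatively, once the $C^{1,\alpha_0}$ graph representation is in hand, the uniform bound $\|A_t\|\le C$ of Lemma \ref{Lem: C11estimate} already yields $\|\partial^2 v\|_{L^\infty}\le C$, hence $v\in W^{2,p}$ for all $p$.) As these are local estimates at the fixed scale $r_0$, no global covering of $\Sigma_t$ is needed and the asserted uniform bounds follow.

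The main obstacle is the first step — the uniform $C^{1,\alpha_0}$ estimate — since it is not implied by the weak mean curvature bound alone but requires the variational characterization of the slices as minimizing hulls, together with the uniform bounded-geometry input for $M$ (this is precisely what makes the estimate genuinely uniform as $\Sigma_t$ recedes to infinity). The passage from $C^{1,\alpha_0}$ to $W^{2,p}$ and then to $C^{1,\alpha}$ for all $\alpha<1$ is a standard elliptic bootstrap.
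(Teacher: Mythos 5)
Your proposal is correct and follows the same route as the paper: the paper simply cites Theorem~1.3 of \cite{HI2001} for the uniform $C^{1,\alpha}$ bound (the minimizing-hull $\varepsilon$-regularity you unpack) and then combines Lemma~\ref{Lem: weakmeancurvaturebound} with standard elliptic $L^p$ theory to get $W^{2,p}$. You have just spelled out the content of the cited theorem and the elliptic bootstrap in more detail, including the bounded-geometry input that makes the constants $t$-independent.
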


\begin{proof}
The uniform $C^{1,\alpha}$ norm comes from Theorem 1.3 in \cite{HI2001}. Also, from Lemma \ref{Lem: weakmeancurvaturebound}, $\Sigma_t$ has uniformly bounded weak mean curvature. The uniform $W^{2,p}$ norm then follows easily from the theory of elliptic partial differential equations.
\end{proof}

\section{Properties of Approximating Flow}\label{section: approximationflow}

In this section, $(M,g)$ is an asymptotically hyperbolic manifold and $\Sigma_t$ denotes a weak solution of inverse mean curvature flow in $(M,g)$ with connected $C^2$ smooth initial data surface.

A smooth solution of mean curvature flow (MCF) in $(M,g)$ means a smooth family of immersed surfaces $F:\Sigma\times [0,T)\to M$ satisfying
\begin{equation}
\frac{\partial}{\partial t}F(p,t)=-H(p,t)\nu(p,t),\quad F(\cdot,0)=F_0,
\end{equation}
where $F_0$ is a smooth embedding. If we denote slices of a mean curvature flow by $\tilde\Sigma_s=F(\Sigma,s)$, we also call $F:\Sigma\times[0,T)\to M$ a mean curvature flow $\tilde\Sigma_s$ for convenience. Using this statement, from the weak solution $\Sigma_t$, we can construct a mean curvature flow $\Sigma_{t,s}$ as following:
\begin{lemma}\label{Lem: MFCsigmats}
There is a $T_0>0$ such that for any $\Sigma_t$, $t\geq T_0$, there exists a smooth mean curvature flow $\Sigma_{t,s}$, $0<s\leq s_0(t)\leq 1$, such that any sequence $\Sigma_{t,s_j}$ with $s_j\to 0$ has a sub-sequence, which converges  to $\Sigma_t$ in $C^{1,\alpha}$ sense for any $0<\alpha<1$, as $j\to \infty$. In addition, $\Sigma_{t,s}$ has uniformly bounded positive mean curvature and second fundamental from, where the bound is independent of $t$ and $s$.
\end{lemma}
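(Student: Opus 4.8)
The plan is to treat $\Sigma_t$ as rough initial data for mean curvature flow and to obtain $\Sigma_{t,s}$ as a limit of classical flows started from smooth approximants; all constants below will be uniform in $t\ge T_0$ because the properties of $\Sigma_t$ that we use are themselves uniform there. First, for $t$ large Corollary~\ref{Cor: angleestimatesigmat} and Lemma~\ref{Lem: pinch sigma t} let us write $\Sigma_t=\{(\rho_t(\theta),\theta):\theta\in\mathbf S^2\}$ as a star-shaped radial graph enclosing $B_{R_0}$, and Lemmas~\ref{Lem: W 2,p norm sigma t} and~\ref{Lem: C11estimate} give $\rho_t\in C^{1,\alpha}\cap W^{2,p}$ together with a uniform $C^{1,1}$-bound. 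Mollifying $\rho_t$ in local coordinate charts of $\mathbf S^2$ produces smooth functions $\rho_t^{(k)}\to\rho_t$ in $C^{1,\alpha}$ with $\|\nabla^2\rho_t^{(k)}\|_{L^\infty}\le C$; the corresponding graphs $\Sigma_t^{(k)}$ are then smooth, enclose $B_{R_0}$, lie in the exterior region, and satisfy $\|A_{\Sigma_t^{(k)}}\|\le C$ uniformly in $k$ and $t$. Moreover $\Sigma_t$, being a slice of a weak inverse mean curvature flow, is outer-minimizing and hence weakly mean convex, $H_{\Sigma_t}\ge0$ a.e.; since the mean curvature of a radial graph is a quasilinear operator $H=a^{ij}(\theta,\rho,\nabla\rho)\,\partial^2_{ij}\rho+b(\theta,\rho,\nabla\rho)$ with coefficients H\"older in $(\rho,\nabla\rho)$, comparing $H_{\Sigma_t^{(k)}}$ with the nonnegative mollification $H_{\Sigma_t}\ast\phi_k$ and estimating the coefficient oscillation over the support of $\phi_k$ gives $H_{\Sigma_t^{(k)}}\ge-\epsilon_k$ with $\epsilon_k\to0$, uniformly in $t$.

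Next I would run the mean curvature flow $\Sigma_{t,s}^{(k)}$ starting from $\Sigma_t^{(k)}$. Because $(M,g)$ has uniformly bounded curvature and covariant derivatives of curvature by~(\ref{Eq: errorQdecay}), the evolution inequality for $\|A\|^2$ combined with an ODE comparison yields a time $s_0=s_0(C)>0$, independent of $k$ and $t$, on which the flow exists with $\|A_{\Sigma_{t,s}^{(k)}}\|\le2C$; in particular the surfaces move inward by at most $Cs_0$, remain in the region of controlled ambient geometry, and satisfy $|H|\le C$ and $|\Ric(\nu,\nu)|\le C$ there. Interior smoothing estimates for mean curvature flow then give, after passing to a subsequence, convergence on $[0,s_0]$ to a family $\Sigma_{t,s}$ which is a smooth mean curvature flow on $(0,s_0]$ and is $C^{1,\alpha}$-continuous up to $s=0$ with $\Sigma_{t,0}=\Sigma_t$. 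This produces the asserted $C^{1,\alpha}$-subconvergence as $s\to0$ together with the uniform bounds $\|A_{\Sigma_{t,s}}\|\le2C$ and $H_{\Sigma_{t,s}}\le C$.

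It remains to see that $H_{\Sigma_{t,s}}>0$ for $s>0$. Along $\Sigma_{t,s}^{(k)}$ one has $\partial_sH=\Delta H+(\|A\|^2+\Ric(\nu,\nu))H$, and $\|A\|^2+\Ric(\nu,\nu)\le C'$ on $[0,s_0]$, so applying the minimum principle to $e^{-C's}H$ gives $\min_{\Sigma_{t,s}^{(k)}}H\ge-\epsilon_k e^{C's_0}$; letting $k\to\infty$ yields $H_{\Sigma_{t,s}}\ge0$ for $s\in(0,s_0]$. If $H$ vanished identically on some $\Sigma_{t,s_\ast}$, that surface would be a smooth minimal surface enclosing $B_{R_0}$, which is impossible by the maximum principle against the mean-convex foliation by coordinate spheres (touching $\Sigma_{t,s_\ast}$ from inside at a point where $r$ is minimal); hence the strong maximum principle gives $H_{\Sigma_{t,s}}>0$ for every $s\in(0,s_0]$. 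Setting $s_0(t):=\min\{s_0,1\}$ completes the argument.

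The main obstacle I anticipate is the interface between weak and classical regularity in the first and third steps: one must upgrade ``$\Sigma_t$ is outer-minimizing'' to the quantitative almost-mean-convexity $H_{\Sigma_t^{(k)}}\ge-\epsilon_k$ of the smooth approximants, and then propagate $H\ge0$ through the flow even though the zeroth-order coefficient $\|A\|^2+\Ric(\nu,\nu)$ is \emph{not} sign-definite in the asymptotically hyperbolic setting --- this costs an $e^{C's_0}$ factor, which is harmless only because $s_0$ is a fixed small constant while $\epsilon_k\to0$. By contrast, the uniformity of all constants in $t$ should be automatic, since the $C^{1,1}$-bound on $\Sigma_t$, the enclosed ball, and the star-shapedness are all uniform for $t\ge T_0$.
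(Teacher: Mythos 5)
Your overall plan is the same as the paper's: approximate $\Sigma_t$ by smooth surfaces, run mean curvature flow for a short uniform time, pass to the limit to get $\Sigma_{t,s}$ with $C^{1,\alpha}$-continuity down to $s=0$, and use a maximum principle to show $H>0$. Several steps are implemented differently. You produce $C^{1,1}$-preserving approximants by mollifying the radial graph, which gives a uniform $\|A\|$ bound on the approximants and lets you use the doubling-time estimate; the paper only takes $W^{2,p}$ approximants and so must invoke the interior parabolic Schauder estimate $\|A^i_{t,s}\|\leq Cs^{-(1-\beta)/2}$ followed by an $L^p$ Gronwall argument. For positivity you propagate a pointwise almost-mean-convexity $H\ge-\epsilon_k$ through the flow by a minimum principle, whereas the paper propagates an $L^2$ bound on $(H)_-$ using the $W^{2,p}$ convergence $\int_{\Sigma^i_t}(H^i_t)_-^2\to 0$. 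Both routes are sound; yours requires the extra ``coefficient-oscillation'' step, which is plausible but should be done at an intrinsic scale on $\Sigma_t$ rather than a fixed $\theta$-scale on $\mathbf S^2$, since the $C^{1,\alpha}$ and $C^{1,1}$ bounds of the paper are geometric, while the unrescaled graph function only satisfies $|\nabla_{\mathbf S^2}\rho_t|\le C\sinh\rho_t$. Otherwise the rate $\epsilon_k\to0$ need not be uniform in $t$, which is exactly the uniformity you need.

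There is a genuine (though easily fixable) error in the exclusion of the minimal-surface case. You take the coordinate sphere ``touching $\Sigma_{t,s_*}$ from inside at a point where $r$ is minimal''. That is the wrong tangency: if $r_0=\min_{\Sigma_{t,s_*}}r$, then $S_{r_0}$ lies inside $\Sigma_{t,s_*}$ and the comparison gives only $H_{S_{r_0}}\ge H_{\Sigma_{t,s_*}}(p)=0$, which is no contradiction since $H_{S_{r_0}}>0$ anyway. You must instead go to the \emph{outer} barrier: let $r_1=\max_{\Sigma_{t,s_*}}r$, note that the enclosed region of $\Sigma_{t,s_*}$ lies in $\bar B_{r_1}$, so $\Sigma_{t,s_*}$ is tangent to $S_{r_1}$ from inside at the highest point; being the inner surface it satisfies $H_{\Sigma_{t,s_*}}\ge H_{S_{r_1}}=2\coth r_1+O(e^{-3r_1})>0$ for $R_0$ large, contradicting $H\equiv 0$. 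With this correction your barrier argument is in fact simpler and more direct than the paper's, which instead argues by compactness: a sequence of minimal slices $\Sigma_{t_j}$ would, as in Proposition \ref{starshapeestimate}, subconverge in $C^2$ to a horosphere with $H\equiv 2$, contradicting $H\equiv 0$.
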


\begin{proof}
The construction is the same as in Lemma 2.6 in \cite{HI2008}, except for some slight differences in calculation caused by evolution equations under mean curvature flow in $(M,g)$. From Lemma \ref{Lem: W 2,p norm sigma t}, we can find a sequence of surfaces $\Sigma^i_t$ such that $\Sigma^i_t$ converges to $\Sigma_t$ in $C^{1,\alpha}$ and $W^{2,p}$ sense for any $0<\alpha<1$ and $p>1$, as $i\to\infty$. Since surfaces $\Sigma^i_t$ are smooth, there exist smooth mean curvature flow $\Sigma^i_{t,s}$ for a short time. Using techniques in \cite{EH1991} to get an interior estimate for second fundamental form of $\Sigma^i_{t,s}$, we know that $\Sigma^i_{t,s}$ exists for a fixed time interval $[0,s_0(t)]$ with $s_0(t)\leq 1$ and $\Sigma^i_{t,s}$ converges smoothly to a limit flow $\Sigma_{t,s}$ with $s\in(0,s_0(t)]$, as $i\to\infty$. Now, we verify the properties stated in the proposition.

From Corollary \ref{Cor: angleestimatesigmat} and the choice of $\Sigma^i_t$, surfaces $\Sigma^i_{t,s}$ can be written as graphs over $\mathbb S^2$. From interior parabolic Schauder regularity theory, the second fundamental form of $\Sigma^i_{t,s}$ satisfies
\begin{equation}\label{2ndfm}
\|A_{t,s}^i\|\leq Cs^{-\frac{1-\beta}{2}},
\end{equation}
where $C$ is a universal constant depending on the $C^{1,\beta}$ norm of $\Sigma^i_t$, but independent of $i$. For any $i\geq 1$ and $p\geq 2$, we compute from the evolution equation
\begin{equation*}
\begin{split}
\frac{\partial}{\partial s}\|A^i_{t,s}\|^2\leq&\Delta_{\Sigma_{t,s}^i}\|A^i_{t,s}\|^2-2\|\nabla_{\Sigma_{t,s}^i}A^i_{t,s}\|^2\\
&\quad+2\|A^i_{t,s}\|^4+4\|A ^i_{t,s}\|^2-4|H^i_{t,s}|^2+(\|A^i_{t,s}\|+\|A^i_{t,s}\|^2) O(e^{-3r})
\end{split}
\end{equation*}
that
\begin{equation*}
\begin{split}
\frac{\partial}{\partial s}\int_{\Sigma^i_{t,s}}\|A^i_{t,s}\|^p\mathrm d\mu\leq & p\int_{\Sigma_{t,s}^i}\|A^i_{t,s}\|^{p+2}\mathrm d\mu+2p\int_{\Sigma_{t,s}^i}\|A^i_{t,s}\|^{p}\mathrm d\mu\\
&\quad+\frac{p}{2}\int_{\Sigma_{t,s}^i}\|A^i_{t,s}\|^{p-1}O(e^{-3r})\mathrm d\mu+\frac{p}{2}\int_{\Sigma_{t,s}^i}\|A^i_{t,s}\|^{p}O(e^{-3r})\mathrm d\mu.
\end{split}
\end{equation*}
For $T_0$ and $i$ large enough, by taking spherical subsolution as barriers and using comparison principle for mean curvature flow, we have that the inner radius of $\Sigma^i_{t,s}$ satisfies $\underline r^i_{t,s}\geq \underline r_t-C$. Here and in the sequel, $C$ is a universal constant independent of $i$ and $s$. Now, applying H\"older's inequality in the last second term and using $O(e^{-3r})$ to handle the area term, then absorbing the last term into the second one, we obtain
\begin{equation*}
\frac{\partial}{\partial s}\int_{\Sigma^i_{t,s}}\|A^i_{t,s}\|^p\mathrm d\mu\leq p\int_{\Sigma^i_{t,s}}\|A^i_{t,s}\|^{p+2}\mathrm d\mu+Cp\int_{\Sigma^i_{t,s}}\|A^i_{t,s}\|^p\mathrm d\mu+Cp\left(\int_{\Sigma^i_{t,s}}\|A^i_{t,s}\|^p\mathrm d\mu\right)^{\frac{p-1}{p}}.
\end{equation*}
Solving this inequality, we have
\begin{equation}\label{Eq: secondfundamentalformsigmatsi}
\left(\int_{\Sigma^i_{t,s}}\|A^i_{t,s}\|^p\mathrm d\mu\right)^{\frac{1}{p}}\leq \left(Cs+\left(\int_{\Sigma^i_{t}}\|A^i_{t}\|^p\mathrm d\mu\right)^{\frac{1}{p}}\right) e^{\frac{1}{\beta}C^2s^\beta+Cs}\leq C
\end{equation}
for any $0<s\leq s_0(t)\leq 1$, where we have used (\ref{2ndfm}). {Taking} $i\to\infty$ and passing above estimate to the limit flow $\Sigma_{t,s}$, we know that $\Sigma_{t,s}$ has a locally uniform $W^{2,p}$ estimate for any $p>2$. Therefore, possibly passing to a subsequence, any sequence of surfaces $\Sigma_{t,s_j}$ with $s_j\to 0$ converges to a limit surface $\Sigma_{t,0}$ in $C^{1,\alpha}$ sense, as $j \to \infty$. Also, {taking} $p\to+\infty$ in (\ref{Eq: secondfundamentalformsigmatsi}), there holds $\|A^i_{t,s}\|\leq C$. Combined with the mean curvature flow equation and $C^{1,\alpha}$ convergence from $\Sigma^i_t$ to $\Sigma_t$, it is not difficult to deduce that $\Sigma_{t,s_j}$ converges to $\Sigma_t$ in $C^0$ sense, which implies $\Sigma_{t,0}$ must coincide with $\Sigma_0$. In addition, by taking $i\to\infty$, $\|A_{t,s}\|\leq C$ follows directly from $\|A^i_{t,s}\|\leq C$.

The bound for mean curvature comes from that of the second fundamental form, then it rests to show that $\Sigma_{t,s}$ has positive mean curvature. Similarly, from the evolution equation of mean curvature, we can calculate that
\begin{equation*}
\int_{\Sigma^i_{t,s}}(H^i_{t,s})_-^2\mathrm d\mu\leq e^{\frac{2}{\beta}C^2s^\beta}\int_{\Sigma^i_t}(H^i_t)_-^2\mathrm d\mu,
\end{equation*}
where $(H^i_{t,s})_-$ and $(H^i_t)_-$ are negative parts of the mean curvatures of $\Sigma^i_{t,s}$ and $\Sigma^i_t$, respectively. {Taking} $i\to\infty$ and passing the estimate to $\Sigma_{t,s}$, we obtain the fact that $H_{t,s}\geq 0$. From the parabolic strong maximum principle, there are only two possible cases: $H_{t,s}>0$ or $\Sigma_{t,s}\equiv\Sigma_t$ is a minimal surface.

We claim that there is a $T_0$ so that $\Sigma_t$ is not a minimal surface for any $t\geq T_0$. Otherwise, there exists a sequence $t_j\to+\infty$ such that $\Sigma_{t_j}$ is a minimal surface. Since a minimal surface possesses higher order estimates from locally uniform $C^{1,\alpha}$ estimate from Lemma \ref{Lem: W 2,p norm sigma t}, in the same spirit of the argument in Proposition \ref{starshapeestimate}, a subsequence of $\Sigma_{t_j}$ will converge to a horosphere in $C^2$ sense, which contradicts to the fact $H\equiv 0$ for all $\Sigma_{t_j}$.
\end{proof}

Denote $\underline r_{t,s}$ and $\bar r_{t,s}$ to be the inner and outer radii of $\Sigma_{t,s}$, respectively. We have the following estimates for surfaces $\Sigma_{t,s}$:

\begin{lemma}\label{Lem: sigmatsestimates}
Possibly decreasing the value of $s_0(t)$, there are constants $\Lambda$, $C_1$, $C_2$, $\underline\theta$ and $\bar\theta$, which are independent of $t$ and $s$, such that any surface $\Sigma_{t,s}$ with $t\geq T_0$ and $0<s\leq s_0(t)$ satisfies $m_H(\Sigma_{t,s})\geq -\Lambda$,
\begin{equation}\label{Eq: sigmatsestimates2}
\underline r_t+\frac{s}{2}-C_1\leq \underline r_{t,s}\leq\bar r_{t,s}\leq \bar r_t+\frac{s}{2}+C_1,
\end{equation}
and
\begin{equation}\label{Eq: sigmatsestimates3}
\underline\theta \sinh^2\underline r_{t,s}\leq A(\Sigma_{t,s})\leq\bar\theta\sinh^2\bar r_{t,s}.
\end{equation}
\end{lemma}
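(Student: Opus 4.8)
The plan is to deduce all three assertions from the corresponding facts already known for the weak inverse mean curvature flow slice $\Sigma_t$, exploiting that, by Lemma \ref{Lem: MFCsigmats}, $\Sigma_{t,s}$ is a \emph{smooth} mean curvature flow on $(0,s_0(t)]$ with $H$ and $\|A\|$ bounded by a constant independent of $t$ and $s$, that $\Sigma_{t,s}\to\Sigma_t$ in $C^{1,\alpha}$ as $s\to0$, and that $\Sigma_{t,s}$ is a topological sphere for $t\geq T_0$ (the topology is constant along the smooth flow on $(0,s_0(t)]$ and, as $s\to0$, agrees with that of $\Sigma_t$, a sphere by Lemma \ref{Lem: nearlyumbilicalsphere sigmat}).

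For the radii estimate (\ref{Eq: sigmatsestimates2}) I would repeat the barrier argument of Lemma \ref{Lem: pinch lemma} and of the radius estimate inside the proof of Lemma \ref{Lem: MFCsigmats}: geodesic spheres $S_\rho$ have $H_\rho=2\coth\rho+O(e^{-3\rho})$, so for $R_0$ large suitable solutions $\rho^{\pm}_s$ of an autonomous ODE give a sub- and a super-solution of mean curvature flow sandwiching $\Sigma_{t,s}$, and since $0<s\leq s_0(t)\leq1$ the inner and outer radii move by at most a universal additive constant over this bounded time interval. For the area estimate (\ref{Eq: sigmatsestimates3}): by Corollary \ref{Cor: angleestimatesigmat}, $\langle\partial_r,\nu\rangle\geq1-\eta$ on $\Sigma_t$, so by the $C^{1,\alpha}$ convergence each $\Sigma_{t,s}$ is star-shaped with $\langle\partial_r,\nu\rangle\geq\delta>0$ after possibly shrinking $s_0(t)$; writing $\Sigma_{t,s}$ as a radial graph $r=\phi$ over $\mathbf S^2$, the bounds $\langle\partial_r,\nu\rangle\geq\delta$ and $\|A\|\leq C$ control $|\nabla_{\mathbf S^2}\phi|$, so the $g$-area element is pinched between universal multiples of $\sinh^2\phi\,\mathrm d\mu_{\mathbf S^2}$ and (\ref{Eq: sigmatsestimates3}) follows from $\sinh^2\underline r_{t,s}\leq\sinh^2\phi\leq\sinh^2\bar r_{t,s}$. (Equivalently, $\frac{\mathrm d}{\mathrm ds}A(\Sigma_{t,s})=-\int H^2\in[-CA,0]$ makes $A(\Sigma_{t,s})$ comparable to $A(\Sigma_t)$ for $s\leq1$, and one combines this with Corollary \ref{Cor: areapinchsigmat} and (\ref{Eq: sigmatsestimates2}).)

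The Hawking mass bound is the substantive point, since mean curvature flow carries no Geroch-type monotonicity; I would prove it by a Gronwall estimate run on the smooth approximating flows $\Sigma^i_{t,s}$ of Lemma \ref{Lem: MFCsigmats}, then let $i\to\infty$. On the topological sphere $\Sigma^i_{t,s}$ the Gauss equation (ambient sectional curvature $=-1+O(e^{-3r})$) together with Gauss--Bonnet give, exactly as in Lemma \ref{Lem: nearlyumbilicalsphere sigmat},
\[
\int_{\Sigma^i_{t,s}}(H^2-4)\,\mathrm d\mu=16\pi+2\int_{\Sigma^i_{t,s}}\|\mathring A\|^2\,\mathrm d\mu+\int_{\Sigma^i_{t,s}}O(e^{-3r})\,\mathrm d\mu,
\]
so with $\Psi(s):=\int_{\Sigma^i_{t,s}}(H^2-4)\,\mathrm d\mu-16\pi$ it suffices to show $\Psi(s)\leq C\,A(\Sigma^i_{t,s})^{-1/2}$. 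Differentiating $\Psi$ along the flow, using the standard evolution equations $\partial_sH=\Delta H+(\|A\|^2+\Ric(\nu,\nu))H$ and $\partial_s(\mathrm d\mu)=-H^2\,\mathrm d\mu$, the identity $2\|A\|^2=2\|\mathring A\|^2+H^2$, and $\Ric(\nu,\nu)=-2+O(e^{-3r})$ to cancel the leading-order terms (the Dirichlet term $-2\int|\nabla H|^2$ only helps), one arrives at
\[
\Psi'(s)\leq C\int_{\Sigma^i_{t,s}}\|\mathring A\|^2\,\mathrm d\mu+C\int_{\Sigma^i_{t,s}}O(e^{-3r})\,\mathrm d\mu\leq \tfrac{C}{2}\Psi(s)+C\,A(\Sigma^i_{t,s})^{-\frac12},
\]
where the first step uses $H\leq C$ and the second bounds the error integral by $C\,A^{-1/2}$ via (\ref{Eq: sigmatsestimates2}), the bounded diameter of $\Sigma_t$ (Lemma \ref{Lem: pinch sigma t}) and exponential area growth, followed by another use of Gauss--Bonnet. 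Since $s\leq s_0(t)\leq1$, $A(\Sigma^i_{t,s})$ is comparable to $A(\Sigma^i_t)$, and by the $W^{2,2}$ convergence $\Sigma^i_t\to\Sigma_t$ together with $m_H(\Sigma_t)\geq-\Lambda$ (Lemma \ref{Lem: hawkingmasssigmat}) we get $\Psi(0)\leq C\,A(\Sigma^i_t)^{-1/2}$ for $i$ large; Gronwall then yields $\Psi(s)\leq C\,A(\Sigma^i_{t,s})^{-1/2}$ with a universal $C$, and letting $i\to\infty$ gives $m_H(\Sigma_{t,s})=(16\pi)^{-3/2}A(\Sigma_{t,s})^{1/2}\bigl(16\pi-\int(H^2-4)\bigr)\geq-(16\pi)^{-3/2}C=:-\Lambda$.

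The main obstacle is precisely this last step: one must transport along the mean curvature flow not merely boundedness of the geometric quantities but the \emph{quantitative decay} $\int\|\mathring A\|^2\leq C A^{-1/2}$ (equivalently $m_H\geq-\Lambda$ with $\Lambda$ uniform), which forces the differential-inequality argument and a careful bookkeeping of the error terms created by the merely asymptotically hyperbolic metric so that each of them decays like $A^{-1/2}$ rather than only staying bounded; shrinking $s_0(t)$ is needed both for this and to keep every $\Sigma_{t,s}$, $0<s\leq s_0(t)$, star-shaped.
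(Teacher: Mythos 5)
Your proof is correct, but on the crucial Hawking mass bound it takes a genuinely different and more elaborate route than the paper. The paper establishes only the estimate
$\int_{\Sigma_{t,s}}H^2\,\mathrm d\mu\leq e^{\frac{2}{\beta}C^2 s^\beta}\int_{\Sigma_t}H^2\,\mathrm d\mu$
(the analogue, for $H^2$ rather than $(H)_-^2$, of the Gronwall computation already performed in Lemma~\ref{Lem: MFCsigmats}; the point is that $2\|A\|^2H^2-H^4=2\|\mathring A\|^2H^2\geq0$ is controlled by the interior $L^\infty$ bound $\|A^i_{t,s}\|\leq Cs^{-(1-\beta)/2}$). Combined with the $C^{1,\alpha}$-continuity of $A(\Sigma_{t,s})$ in $s$ and the definition of $m_H$, this immediately gives $\liminf_{s\to0}m_H(\Sigma_{t,s})\geq m_H(\Sigma_t)\geq-\Lambda$, and then one simply shrinks $s_0(t)$; no Gauss--Bonnet, no control of $\int\|\mathring A\|^2$, and no topological input on $\Sigma_{t,s}$ is needed. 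You instead track $\Psi(s)=\int(H^2-4)\,\mathrm d\mu-16\pi$, rewrite $\int\|\mathring A\|^2$ via Gauss--Bonnet on a topological sphere, and run Gronwall on $\Psi$ with the error terms controlled by $A^{-1/2}$. This does work — your bookkeeping is right ($\Psi'\leq\int2\|\mathring A\|^2H^2+\int O(e^{-3r})H^2$, $H\leq C$, $\int\|\mathring A\|^2=\tfrac12\Psi+\int O(e^{-3r})$, error $\leq CA^{-1/2}$ via the radius estimate) — but it imports the sphere topology of $\Sigma_{t,s}$ as a dependency, adds the Gauss--Bonnet step, and requires some care that $\Psi^i(0)\leq CA^{-1/2}$ for the smooth approximants $\Sigma^i_t$ (which you correctly note follows from the $W^{2,2}$ convergence). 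What your version buys is a quantitative $\Psi(s)\leq CA(\Sigma_{t,s})^{-1/2}$ uniform on all of $(0,1]$, slightly stronger than what the paper records. Your treatment of the radii and area estimates via barriers and a graph representation is also sound, though the paper obtains them in one line from the $C^{1,\alpha}$ convergence to $\Sigma_t$ together with Lemma~\ref{Lem: pinch sigma t} and Corollary~\ref{Cor: areapinchsigmat}; since $\|A_{t,s}\|\leq C$ and $s\leq1$, the flow moves the radii by at most an additive universal constant, which already gives \eqref{Eq: sigmatsestimates2} and then \eqref{Eq: sigmatsestimates3} without the star-shape/graph machinery.
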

\begin{proof}
From the evolution equation of the mean curvature, by a similar argument as in the proof of Lemma \ref{Lem: MFCsigmats}, we obtain
\begin{equation}\label{Eq: sigmatsestimates1}
\int_{\Sigma_{t,s}}H_{t,s}^2\mathrm \,d\mu\leq e^{\frac{2}{\beta}C^2s^\beta}\int_{\Sigma_t}H_t^2\mathrm d\mu,
\end{equation}
where $C$ is a universal constant independent of $t$ and $s$ and $0<\beta<1$ is a fixed constant. Combined with the definition of Hawking mass and Lemma \ref{Lem: hawkingmasssigmat}, it is easy to see
\begin{equation*}
\liminf_{s\to 0}m_H(\Sigma_{t,s})\geq m_H(\Sigma_t)\geq -\Lambda.
\end{equation*}
With larger $\Lambda$ and smaller $s_0(t)$, $m_H(\Sigma_{t,s})\geq -\Lambda$ for all $t\geq T_0$ and $0<s\leq s_0(t)$. Estimates (\ref{Eq: sigmatsestimates2}) and (\ref{Eq: sigmatsestimates3}) come from the $C^{1,\alpha}$ convergence of $\Sigma_{t,s}$ in Lemma \ref{Lem: MFCsigmats} and Corollary \ref{Cor: areapinchsigmat}.
\end{proof}

Since $\Sigma_{t,s}$ is a smooth surface with positive mean curvature, we can consider the smooth solution of inverse mean curvature flow with initial data surface $\Sigma_{t,s}$. Denote the slice of the solution at time $\tau>0$ by $\bar\Sigma_{t,s,\tau}$ and $\tau_0(t,s)$ the maximum existence time for $\bar\Sigma_{t,s,\tau}$. Let $\underline r_{t,s,\tau}$ and $\bar r_{t,s,\tau}$ be the inner and outer radii of $\bar\Sigma_{t,s,\tau}$.

\begin{lemma}\label{Lem: sigmatstauestimates}
There are constants $\Lambda$ and $C_0$ such that $\bar\Sigma_{t,s,\tau}$ satisfies $m_H(\bar\Sigma_{t,s,\tau})\geq-\Lambda$ and
\begin{equation}\label{Eq: sigmatstauestimates1}
\underline r_{t,s}+\frac{\tau}{2}-C_0\leq \underline r_{t,s,\tau}\leq\bar r_{t,s,\tau}\leq \bar r_{t,s}+\frac{\tau}{2}+C_0.
\end{equation}
\end{lemma}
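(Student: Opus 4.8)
The statement is the analogue, for the IMCF starting from $\Sigma_{t,s}$, of Lemma \ref{Lem: pinch sigma t} (radii pinching) together with the Hawking mass lower bound of Lemma \ref{Lem: hawkingmasssigmat}. The natural strategy is to quote those two results for this particular smooth IMCF, with initial data estimates supplied by Lemma \ref{Lem: sigmatsestimates}. Concretely, I would first observe that since $\Sigma_{t,s}$ has uniformly bounded positive mean curvature and bounded second fundamental form (Lemma \ref{Lem: MFCsigmats}), and by \eqref{Eq: sigmatsestimates2} encloses $B_{R_0}$ once $t\ge T_0$ (enlarging $T_0$ if necessary), the smooth solution $\bar\Sigma_{t,s,\tau}$ is a smooth IMCF enclosing $B_{R_0}$ and Lemma \ref{Lem: pinch lemma} applies on its whole interval of existence. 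This gives
\[
\underline r_{t,s}+\tfrac{\tau}{2}-C_0\le \underline r_{t,s,\tau}\le\bar r_{t,s,\tau}\le \bar r_{t,s}+\tfrac{\tau}{2}+C_0
\]
with $C_0$ the universal constant from Lemma \ref{Lem: pinch lemma}, which is exactly \eqref{Eq: sigmatstauestimates1}.

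For the Hawking mass bound I would run the Geroch monotonicity argument exactly as in the proof of Lemma \ref{Lem: hawkingmasssigmat}, but now starting the integration at $\tau=0$, i.e.\ at the surface $\Sigma_{t,s}$. By the weak Geroch/Geroch monotonicity formula (cf.\ \cite[Lemma 5.8]{H2001}), along the smooth flow one has
\[
m_H(\bar\Sigma_{t,s,\tau})\ge m_H(\Sigma_{t,s})+\frac{1}{(16\pi)^{3/2}}\int_0^\tau A(\bar\Sigma_{t,s,\tau'})^{1/2}\Bigl(16\pi-8\pi\chi(\bar\Sigma_{t,s,\tau'})+\int R+6\,\mathrm d\mu\Bigr)\mathrm d\tau'.
\]
Since $\bar\Sigma_{t,s,\tau}$ is connected with $\chi\le 2$ (inheriting the sphere topology of $\Sigma_t$ via Lemma \ref{Lem: nearlyumbilicalsphere sigmat} and the $C^{1,\alpha}$ convergence; or more crudely $\chi\le 2(\beta_1+1)$), and $R+6=O(e^{-\alpha r})$ with $\alpha>3$, the error term is controlled by the radii estimate \eqref{Eq: sigmatstauestimates1} together with exponential area growth, exactly as in Lemma \ref{Lem: hawkingmasssigmat}. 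Combining with $m_H(\Sigma_{t,s})\ge -\Lambda$ from Lemma \ref{Lem: sigmatsestimates} (enlarging $\Lambda$ if necessary to absorb the tail integral, which is bounded uniformly in $t,s,\tau$) yields $m_H(\bar\Sigma_{t,s,\tau})\ge -\Lambda$.

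The only genuinely nontrivial point — really a bookkeeping subtlety rather than a new difficulty — is making sure all constants are uniform in $t$, $s$, and $\tau$: the constant $C_0$ comes from Lemma \ref{Lem: pinch lemma} and is universal; the Hawking mass tail integral must be bounded using only the universal area growth rate and the uniform decay \eqref{Eq: scalarcurvature}, which it is, so that a single $\Lambda$ works for all admissible $t,s,\tau$. A small additional care is needed for the topology/$\chi$ input if one wants the sharp $\chi\le 2$ rather than $\chi\le 2(\beta_1+1)$, but either suffices for the lower bound. Beyond that, the proof is a direct repetition of the arguments already developed for $\Sigma_t$ and $\Sigma_{t,s}$, applied one step further along the approximating IMCF.
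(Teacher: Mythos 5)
Your proposal matches the paper's own proof in both structure and in the lemmas invoked: the radii pinching comes from applying Lemma \ref{Lem: pinch lemma} to the smooth IMCF $\bar\Sigma_{t,s,\tau}$, and the Hawking mass lower bound comes from Lemma \ref{Lem: sigmatsestimates} together with the Geroch monotonicity argument of Lemma \ref{Lem: hawkingmasssigmat} run along the approximating flow. Your version simply spells out the steps (initial radii and positive mean curvature from Lemma \ref{Lem: sigmatsestimates} and Lemma \ref{Lem: MFCsigmats}, control of the error integral via \eqref{Eq: scalarcurvature} and exponential area growth) that the paper leaves implicit.
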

\begin{proof}
The Hawking mass lower bound comes from Lemma \ref{Lem: sigmatsestimates} and a similar argument in Lemma \ref{Lem: hawkingmasssigmat}. Estimate (\ref{Eq: sigmatstauestimates1}) follows from Lemma \ref{Lem: pinch lemma}.
\end{proof}
\end{appendix}

\Acknowledgements
{The research is supported by the NSFC grants No.11671015 and 11731001. {The authors would like to thank the referees for their careful reading and helpful suggestions on statements of this paper.}}

\end{document}